     \def\section{\@startsection{section}{1}%
     \z@{.7\linespacing\@plus\linespacing}{.5\linespacing}%
     {\bfseries
     \centering
     }}
     \def\@secnumfont{\bfseries}
\tikzset{%
  >=latex, 
  inner sep=0pt,%
  outer sep=2pt,%
  mark coordinate/.style={inner sep=0pt,outer sep=0pt,minimum size=3pt,
    fill=black,circle}%
}
\newtheorem{theorem}{Theorem}[section]
\newtheorem{lemma}{Lemma}[section]
\newtheorem{prop}{Proposition}[section]
\theoremstyle{definition}
\numberwithin{equation}{section}
\newcommand{\mch}{{\mathcal H}}
\newcommand{\mcp}{{\mathcal P}}
\newcommand{\mcz}{{\mathcal Z}}
\newcommand{\mbr}{{\mathbb R}}
\newcommand{\mbc}{{\mathbb C}}
 \newcommand{\ch}{{\mathcal H}}
\newcommand{\ovs}{\overline{\sigma}}
\newcommand{\la}{{\langle}}
\newcommand{\ra}{{\rangle}}
\newcommand{\norm}[1]{\left\lVert#1\right\rVert}
\begin{document}

\title[Polynomials and High-Dimensional Spheres]{Polynomials and High-Dimensional Spheres}
 
\author{Amy Peterson}
\address{Department of Mathematics \\
  University  of Connecticut\\
Storrs, CT 062569 \\
e-mail: \sl amy.peterson@uconn.edu}
 
\author{Ambar N.~Sengupta}
\address{Department of Mathematics \\
  University  of Connecticut\\
Storrs, CT 062569\\
e-mail: \sl ambarnsg@gmail.com}
\thanks{Research   supported in part  by NSA grant H98230-16-1-0330}

\subjclass[2010]{Primary 44A12, Secondary 28C20, 60H40}

\date{11 March 2019}

\dedicatory{}

\begin{abstract} We show that a natural class of orthogonal polynomials on large spheres in $N$ dimensions tend to Hermite polynomials in the large-$N$ limit. We determine the behavior of the spherical Laplacian as well as zonal harmonic polynomials in the large-$N$ limit.  \end{abstract}

\maketitle
 
\section{Introduction}\label{s:Intro}

The uniform measure, of unit total mass, on the sphere of radius $\sqrt{N}$ in $\mbr^N$ is known to converge, in a suitable sense, to Gaussian measure on an infinite-dimensional space $\mbr^\infty$. We show that  orthogonalizing   monomials on the sphere leads, in the large-$N$ limit, to Hermite polynomials.  We prove a similar result for zonal harmonics and we determine the large-$N$ behavior of the spherical Laplacian. Along the way we establish a number of results about spherical harmonics and the spherical Laplacian from a purely algebraic viewpoint. 

The relation between the infinite-dimensional Gaussian and the high-dimension limit of uniform measures on large spheres has been known historically in the study of  the motion of gas molecules by Maxwell  \cite{MaxGas1860}  and Boltzmann  \cite[pages 549-553]{BoltzStud1868}, and later in more mathematical form, in the works of  Wiener  \cite{WienDS1923}, L\'evy \cite{LevyPLAF1922}, McKean \cite{McKGDS1973},  Hida \cite{HidaStat1970}, and Hida and Nomoto \cite{HiNo1964}.     Umemura and Kono \cite{Ume1965}, following up on the works of Hida et al., studied the limiting behavior of spherical harmonics and spherical Laplacians. They also obtained the limiting results on zonal harmonics and the spherical Laplacian, but our algebraic framework  and methods are different.
 A more detailed description of the distinguishing aspects of our approach is given in subsection \ref{ss:dist}. 
  
   \subsection{Summary of results}\label{ss:sumres} Towards the goal of studying the large-$N$ limit of polynomials and the spherical Laplacian on $S^{N-1}(\sqrt{N})$ we also work through an algebraic framework for spherical harmonics. However, spherical harmonics and the spectrum of the spherical Laplacian are classical subjects and it is not our intention here to provide a comprehensive review of these topics.

   We will work mostly with polynomials and sometimes with the corresponding functions.   Let $\mcp$ be the algebra of all polynomials in a sequence of variables $X_1, X_2, \ldots$. The subalgebra of polynomials in $X_1,\ldots, X_N$ will be denoted $\mcp_N$, and those that are homogeneous of degree $d$ by $\mcp^d$.  To pass from polynomials to functions defined on the sphere $S^{N-1}(a)$ of radius $a>0$ we show in Proposition \ref{P:polysphere} that a polynomial $p\in\mcp_N$ that evaluates to $0$ at every point on $S^{N-1}(a)$ is a polynomial multiple of $X_1^2+\ldots +X_N^2-a^2$.  We denote the ideal of all such multiples in $\mcp_N$ by $\mcz_N(a)$.   Next we show in Proposition \ref{P:reprsnt} how to extract from a polynomial $q\in\mcp_N$ a `minimal' representative $q_{\rm min}\in\mcp_k$ modulo the ideal $\mcz_N(a)$; this is, roughly, the smallest $k$ for which $q$ has a representative, mod $\mcz_N(a)$, that depends only on $X_1,\ldots, X_k$. 
   
  For any positive integer $N$,   the space $\mcp_N$ has, for each $a>0$, the sesquilinear pairing  given by 
\begin{equation}\label{E:ippqintro}
\la p, q\ra_{a, N}=\int_{S^{N-1}(a)}p(x)\overline{q(x)}\,d\ovs(x),
\end{equation}
where $\ovs$ is the unit-mass uniform surface measure on the sphere $S^{N-1}(a)$. 
We show in Lemma    \ref{L:zerof} and  subsection \ref{ss:ipkh} that this is an inner-product on $\mcp_k$ if $k<N$. We will often be interested in the sphere of radius $\sqrt{N}$; we use the notation:
\begin{equation}
\la p, q\ra_N=\la p,q \ra_{\sqrt{N}, N}.
\end{equation}

    In Theorem \ref{T:ipNinfty} we show that for any polynomials $p, q\in\mcp_k$ and any $N>k$, we have the limiting inner-product
   \begin{equation}
   \lim_{N\to\infty}\la p, q\ra_N=\la p, q\ra_{L^2(\mbr^\infty, \mu)}.
   \end{equation}
  where on the right hand side we view $p$ and $q$ as functions on the space $\mbr^\infty$ of real sequences, equipped with the product Gaussian measure. 
  

 { In section \ref{s:poly} we use the rotation generating operators 
   $$M_{jk}=X_j\partial_k-X_k\partial_j$$
    acting on  $\mcp$ and define the `quadratic Casimir'

   \begin{equation}\label{E:LapSumRotGen}\norm{{\bf M}}^2=\sum_{\{j,k\}\in P_2(N)}M_{jk}^2,\end{equation}
   where the sum is over $P_2(N)$, the set of all 2-element subsets of $\{1,2,\ldots, N\}$. An algebraic computation establishes the following very useful identity connecting the Euclidean Laplacian $\Delta_N=\sum_{j=1}^N\partial_j^2$ and the quadratic Casimir:
   \begin{equation}\label{E:x2Dintro}
\begin{split}
\norm{X}^2\Delta_{N} &=  (r\partial_r)^2+(N-2)r\partial_r +\norm{{\bf M}}^2,
\end{split}
\end{equation}
where now $\norm{X}^2=X_1^2+\ldots +X_N^2$, and $r\partial_r=\sum_{j=1}^NX_j\partial_j$. 

In section \ref{s:lapl} we will define the spherical Laplacian $\Delta_{S^{N-1}(a)}$ by using the quadratic Casimir $\norm{\bf{M}}^2$; for now let us note that the two are related but not the same.

} 

We introduce in (\ref{P:harmsphere}) a projection map
$$L_a:\mcp_N\to\mch_N=\ker\Delta_N,$$
for which $L_ap$ agrees with $p$ as a function on $S^{N-1}(a)$ (the existence of $L_ap$ is a known result but we present a proof in  subsection \ref{ss:redharm}). We work through algebraic properties of $L_a$ such as its commutativity with $M_{jk}$ and hence with $\norm{{\bf M}}^2$:
$$L_a\norm{{\bf M}}^2=\norm{{\bf M}}^2L_a$$
(This is in (\ref{E:LMjk}) and Proposition \ref{P:spherLapl}) and show (Proposition \ref{P:Laselfadj}) that it is an orthogonal projection relative to the pairing $\la\cdot,\cdot\ra_{a,N}$.

In subsections \ref{ss:sphlapeigen} and \ref{ss:zsh} we briefly study spherical harmonics and zonal harmonics. This is  a classical subject,   but for our needs we develop the results and notion in terms of the algebra of polynomials so that we can apply the results to the situation of interest to us where {\em both the radius and the dimension change}.     In Proposition \ref{P:qXmdeg} we show that if a one-variable polynomial $q_m$  satisfies the differential equation
  \begin{equation}\label{E:legendintro}
(a^2-X^2)q''(X)-(N-1)Xq'(X)+m(m+N-2)q(X)=0
\end{equation}
then  the harmonic polynomial $L_aq_m(X_1)$   is  homogeneous of degree $m$; this is called a {\em zonal harmonic}.

We show in Lemma    \ref{L:zerof} and in subsection \ref{ss:ipkh} the pairing $\la \cdot,\cdot\ra_{a,N}$  is an inner-product   on the subspace $\mch_N\subset\mcp_N$ comprised of harmonic polynomials.   In Proposition \ref{P:adj} we show that the operators $M_{jk}$ are skew-self-adjoint and $\norm{{\bf M}}^2$ is self-adjoint with respect to this inner-product.  The homogeneous  harmonic polynomials of different degrees are orthogonal. 

We show in Proposition \ref{P:orthoqmX} that the zonal harmonic polynomials $q_0, q_1, q_2,\ldots$  arise from the Gram-Schmidt process applied to the inner-product on $\mbc[X]$ given by
\begin{equation}
\la p,q\ra_{a,N}= \frac{c_{N-2}}{c_{N-1}a}\int_{-a}^ap(x)\overline{q(x)} \left(1-\frac{x^2}{a^2}\right)^{\frac{N-3}{2}}\,dx,
\end{equation}
where $c_j$ is the surface area of the $j$-dimensional unit sphere.

In section \ref{s:lapl} we study the limiting behavior of the Laplacian on the sphere $S^{N-1}(\sqrt{N})$ as $N\to\infty$. To this end we give a precise definition of the spherical Laplacian
$$\Delta_{S^{N-1}(a)}$$
on the polynomial algebra $\mcp_N$.  There is a subtlety here: for any $p\in\mcp_n$, for $n<N$, we define $\Delta_{S^{N-1}(a)}p$ to be the unique polynomial in $\mcp_n$ for which
$$\Delta_{S^{N-1}(a)}p=\frac{1}{a^2}\sum_{\{j,k\}\in P_2(N)}M_{jk}^2p\qquad\hbox{mod $\mcz_n(a)$.}$$
That such a unique polynomial exists is shown in Proposition \ref{P:Mjkfmin}. We also prove that then  $\Delta_{S^{N-1}(a)}$ is a linear operator, self-adjoint on each finite-dimensional space $\mcp^d_N$. 

We prove in Proposition \ref{P:limspjLapl} that the spherical Laplacian $\Delta_{S^{N-1}(\sqrt{N})}$ converges to the Hermite differential operator:
\begin{equation}
\lim_{N\to\infty} \Delta_{S^{N-1}(\sqrt{N})}=\sum_{j=1}^\infty\left[\partial_j^2-X_j\partial_j\right].
\end{equation}
It is important to note that here on the left the limit is in the pointwise sense for a sequence of operators on the vector space of  polynomials.

We also show that a  limiting relationship between Gegenbauer polynomials and Hermite polynomials, given by
 \begin{equation}\label{E:GgnHerm0}
\lim_{N\to\infty}C^{\left((N-2)/2\right)}_m(X/\sqrt{N})= H_m(X),
\end{equation}
 can be explained in terms of the large-$N$ limit of zonal harmonics for $S^{N-1}(\sqrt{N})$.

 \subsection{Distinctive features of our approach}\label{ss:dist}   Let us note some significant aspects of our work that differentiate it from other related works: (i)  we make extensive use of  the rotation generator operators $M_{jk}$ and establish numerous results, such as those in subsection \ref{ss:mjkharm},  for the action of these operators on polynomials; (ii) our use of the quadratic Casimir $\norm{\mathbf M}^2$, especially the identity (\ref{E:x2Dintro}); (iii) the introduction of the restriction map $L_a$ in Proposition \ref{P:harmsphere}; (iv) we prove a large number of algebraic results of intrinsic interest, such as 
 Proposition \ref{P:polysphere} and Proposition \ref{P:rotinvpoly2}, as well as many other results such as the limiting orthogonal projection result Proposition \ref{ss:proj};   (v) our definition of the spherical Laplacian $\Delta_{S^{N-1}(a)}$  in subsection \ref{ss:sphlap}, which depends critically on the uniqueness result Proposition \ref{P:reprsnt}.  For fixed $N$ one could define the spherical Laplacian to be the same as the quadratic Casimir, but in studying the behavior with varying $N$ it is necessary to be more careful.
 Our result Proposition \ref{P:limspjLapl} on the limiting behavior of the spherical Laplacian $\Delta_{S^{N-1}(\sqrt{N})}$ is, as best as we can determine, logically different from the similar result in \cite{Ume1965} because of the precise definition of the spherical Laplacian operator that we use. Finally, another difference, in framework, between \cite{Ume1965} and our work is that we work with ordinary limits of results on $\mbr^N$ and do not need to use projective limit spaces to state our results.

\subsection{Notation} We will summarize here much of the notation used in the paper.  Let $\mcp$ be the algebra of all polynomials in the sequence of variables $X_1, X_2, \ldots$. Then we have the algebraic direct sum decomposition
\begin{equation}\label{E:ppd}
\mcp=\mcp^0\oplus\mcp^1\oplus\mcp^2\oplus\ldots,
\end{equation}
where
\begin{equation}
\hbox{$\mcp^d$ is the space of polynomials homogeneous of degree $d$.}
\end{equation}
For example, $X_1^3X_7^2+X_2X_5X_9^3\in\mcp^5$. Let us note that
$$\mcp^0=\mbc.$$

Let
\begin{equation}\label{E:mcpleqd}\mcp^{\leq d}=\mcp^0\oplus\mcp^1\oplus\ldots \oplus\mcp^d.
\end{equation}
 be the subspace of polynomials of total degree $\leq d$.  The subspace of polynomials in $X_1,\ldots, X_N$ will be denoted by $\mcp_N$:
\begin{equation}
\mcp_N=\mbc[X_1,\ldots, X_N].
\end{equation}
Moreover, we use the notation:
 \begin{equation}
 \begin{split}
 \mcp^d_N &=\mcp_N\cap\mcp^d\\
 \mcp^{\leq d}_N &=\mcp_N\cap \mcp^{\leq d}.
 \end{split}
 \end{equation}
 A basis of $\mcp_N^d$ is formed by all the monomials
 $$X_1^{j_1}\ldots X_N^{j_N}$$
 for which  each $j_a$ is a non-negative integer and
 $$j_1+\ldots +j_N= d.$$
We note that for such a monomial and any $n\in\{1,\ldots, N-1\}$, the monomials $X_1^{j_1}\ldots X_n^{j_n}$ and $X_{n+1}^{j_{n+1}}\ldots X_N^{j_{N }}$ are both in  the same monomial basis of $\mcp_N^{\leq d}$.

The subspace of $\mcp_N$ consisting of all polynomials homogeneous of degree $d$ will be denoted $\mcp^d_N$; thus
\begin{equation}
\mcp_N=\bigoplus_{d\geq 0}\mcp^d_N
\end{equation}
as an algebraic direct sum. The Laplacian $\Delta_N$ is the operator on $\mcp_N$ given by
\begin{equation}
\Delta_N=\partial_1^2+\ldots +\partial_N^2,
\end{equation}
where $\partial_j$ is the usual partial differential operator in the variable $X_j$.
We denote  by $\mch_N$ the subspace of all harmonic polynomials in $\mcp_N$; thus 
\begin{equation}\label{E:defHN}
\mch_N=\ker\Delta_N.
\end{equation}
  If $p\in\mch_N$ then we can write $p$ in a unique way as the sum of harmonic polynomials that are homogeneous of different degrees; thus, we have the algebraic direct sum:
   \begin{equation}
   \mch_N=\mch^0_N\oplus \mch^1_N\oplus \ldots,
   \end{equation}
   where $\mch^d_N$ is the subspace consisting of all harmonic polynomials  that are homogeneous of degree $d$. We also use the notation
   \begin{equation}
   \mch^{\leq d}_N=\mch^0_N\oplus\ldots\oplus\mch^d_N.
   \end{equation}

\section{Spherical and Gaussian integration}\label{s:ifh}

In this section we establish relationships between integration over spheres and integration with respect to  Gaussian measure.

\subsection{Integration of homogeneous functions}\label{ss:inth} A function $f$ on $\mbr^N$ is said to be {\em homogeneous of degree $d$} if
$$f(tx)=t^df(x)\qquad\hbox{for all $x\in\mbr^N$ and $t\in\mbr$.}$$
We will work with homogeneous polynomial functions.

Let us note that the product of a homogeneous function of degree $d_1$ and a homogeneous function of degree $d_2$ is a homogeneous function of degree $d_1+d_2$.  

\begin{prop}\label{P:ip}
Let $f$ be a Borel function on $\mbr^N$,  homogeneous of degree $d$ and integrable with respect to the standard Gaussian measure. Then
\begin{equation}\label{E:ip}
2^ {\frac{d}{2}}\frac{  \Gamma\left(\frac{d+N}{2}\right) }{  \Gamma\left(\frac{ N}{2}\right) }\int_{S^{N-1}} f\,d\ovs
=\int_{\mbr^N} f(x) (2\pi)^{-N/2}e^{-\frac{\norm{x}^2}{2}}\,dx,
\end{equation}
where on the left $\ovs$ is the uniform measure on the unit-sphere $S^{N-1}$, normalized to having total measure $1$, and on the right we have the standard Gaussian measure on $\mbr^N$.
\end{prop}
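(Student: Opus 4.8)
The plan is to reduce everything to a one–dimensional radial integral by passing to polar coordinates and to exploit the homogeneity of $f$ to separate the radial and angular variables. Writing a point of $\mbr^N\setminus\{0\}$ as $x=r\omega$ with $r=\norm{x}>0$ and $\omega\in S^{N-1}$, the Lebesgue measure decomposes as $dx=c_{N-1}\,r^{N-1}\,dr\,d\ovs(\omega)$, where $\ovs$ is the unit-mass surface measure and $c_{N-1}$ is the total surface area of $S^{N-1}$. I would first record this decomposition in the form
$$\int_{\mbr^N} g(x)\,dx = c_{N-1}\int_0^\infty\left(\int_{S^{N-1}} g(r\omega)\,d\ovs(\omega)\right) r^{N-1}\,dr,$$
valid for every nonnegative Borel $g$ by Tonelli, and hence for Gaussian-integrable $g$ after splitting into positive and negative parts.

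Applying this to $g(x)=(2\pi)^{-N/2}e^{-\norm{x}^2/2}f(x)$ and using homogeneity, $f(r\omega)=r^d f(\omega)$, the angular integral factors out and the right-hand side of (\ref{E:ip}) becomes
$$c_{N-1}(2\pi)^{-N/2}\left(\int_0^\infty r^{d+N-1}e^{-r^2/2}\,dr\right)\int_{S^{N-1}} f\,d\ovs.$$
The radial integral is evaluated by the substitution $u=r^2/2$, which turns it into a Gamma integral and yields $\int_0^\infty r^{d+N-1}e^{-r^2/2}\,dr=2^{(d+N)/2-1}\Gamma\big((d+N)/2\big)$.

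It remains to pin down the constant $c_{N-1}(2\pi)^{-N/2}$. Rather than invoke the closed form of the surface area, I would determine this constant by specializing to the case $d=0$, $f\equiv 1$: since $\ovs$ has total mass $1$ and the standard Gaussian measure has total mass $1$, the identity forces $c_{N-1}(2\pi)^{-N/2}\int_0^\infty r^{N-1}e^{-r^2/2}\,dr=1$, i.e. $c_{N-1}(2\pi)^{-N/2}=\big(2^{N/2-1}\Gamma(N/2)\big)^{-1}$. Substituting this back and forming the quotient of the two Gamma factors gives exactly the coefficient $2^{d/2}\Gamma\big((d+N)/2\big)/\Gamma(N/2)$ in front of $\int_{S^{N-1}} f\,d\ovs$, which is (\ref{E:ip}).

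The computation is otherwise entirely routine; the only point requiring care is the measure-theoretic justification of the polar decomposition for a general Borel function $f$ (rather than a continuous one) and the attendant use of Tonelli, together with checking that the hypothesis of Gaussian-integrability guarantees finiteness of the radial integral so that the factorization and the interchange of integrals are legitimate. This is where I expect the only real, if mild, obstacle to lie.
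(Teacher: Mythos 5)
Your proposal is correct and follows essentially the same route as the paper: polar disintegration of the Gaussian integral, homogeneity to factor out the angular integral, the substitution $u=r^2/2$ to produce the Gamma function, and fixing the constant by specializing to $f\equiv 1$ (the paper uses exactly this trick to identify the surface area $c_{N-1}$). The only cosmetic difference is that the paper cites an external reference for the disintegration formula where you invoke Tonelli directly.
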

Let us note for us later that by homogeneity of $f$ we have
\begin{equation}\label{E:ipa}
 \int_{S^{N-1}(a)} f\,d\ovs
=a^d \frac{ \Gamma\left(\frac{ N}{2}\right)  }{ 2^ {\frac{d}{2}} \Gamma\left(\frac{d+N}{2}\right)}
\int_{\mbr^N} f(x) (2\pi)^{-N/2}e^{-\frac{\norm{x}^2}{2}}\,dx,
\end{equation}
for any radius $a>0$.

\begin{proof}  We have the polar disintegration formula

\begin{equation}\label{E:polg}
\int_{\mbr^N} f(x) (2\pi)^{-N/2}e^{-\frac{\norm{x}^2}{2}}\,dx=\int_0^\infty\left[\int_{S^{N-1}(r)}f(x)\,d\sigma(x)\right]\,(2\pi)^{-N/2}e^{-r^2/2}dr,
\end{equation}
where $\sigma$ is the standard surface measure on the sphere $S^{N-1}(r)$ of radius $r$ (see, for example, \cite{SenGRL2018}*{(3.10)} for proof). Then we observe by homogeneity of $f$ that the spherical integral over the sphere of radius $r$ is a multiple of the integral over the unit sphere:
\begin{equation}\label{E:Pg}
\begin{split}
\int_{S^{N-1}(r)}f(x)\,d\sigma(x) & =\int_{S^{N-1}} f(rx)\,r^{N-1}\,d\sigma(x)\\
&=r^{d+N-1}\int_{S^{N-1}}f(x)\,d\sigma(x).
\end{split}
\end{equation}
Using this in the Gaussian integration (\ref{E:polg}) we have
\begin{equation}\label{E:polg2}
\begin{split}
\int_{\mbr^N} f(x) (2\pi)^{-N/2}e^{-\frac{\norm{x}^2}{2}}\,dx &=\int_0^\infty\left[r^{d+N-1}\int_{S^{N-1}} f\,d\sigma \right]\,(2\pi)^{-N/2}e^{-r^2/2}dr\\
&=\left[\int_{S^{N-1}} f\,d\sigma\right]\int_0^\infty r^{d+N-1}(2\pi)^{-N/2}e^{-r^2/2}\,dr\\
&=(2\pi)^{-N/2}2^{\frac{d+N  }{2}-1} \Gamma\left(\frac{d+N}{2}\right)\int_{S^{N-1}} f\,d\sigma,
\end{split}
\end{equation}
where in obtaining the Gamma function we used the substitution $y=r^2/2$ in the integration.
Taking $f=1$ (degree $0$) here gives the surface area of the sphere $S^{N-1}$ to be
\begin{equation}\label{E:surf}
c_{N-1}=2\frac{\pi^{N/2}}{ \Gamma(N/2)}.
\end{equation}
Then, returning to (\ref{E:polg2}), we have:
\begin{equation}\label{E:polg3}
\begin{split}
&\int_{\mbr^N} f(x) (2\pi)^{-N/2}e^{-\frac{\norm{x}^2}{2}}\,dx \\
& =(2\pi)^{-N/2}2^{\frac{d+N  }{2}-1} \Gamma\left(\frac{d+N}{2}\right)c_{N-1}\int_{S^{N-1}} f\,d\ovs\\
&=2^{\frac{d}{2}}\frac{  \Gamma\left(\frac{d+N}{2}\right) }{  \Gamma\left(\frac{ N}{2}\right) }\int_{S^{N-1}} f\,d\ovs
\end{split}
\end{equation}
 
\end{proof}

As an immediate consequence of this result, we see that the Gaussian $L^2$  inner-product when applied to homogeneous  functions can be computed by working out the  $L^2$ inner-product on the unit sphere, with normalized uniform measure:

\begin{prop}\label{P:ipsph} Let $f$ and $g$ be homogeneous Borel functions on $\mbr^N$,  of degrees $d_f$ and $d_g$, respectively, and square-integrable with respect to standard Gaussian measure $\mu$. Then 
\begin{equation}\label{E:ipfg}
\la f, g\ra_{L^2(\mbr^N, \mu)}=2^{ d}\frac{  \Gamma\left(\frac{N}{2}+d\right) }{  \Gamma\left(\frac{ N}{2}\right) } \la f, g\ra_{L^2(S^{N-1}, \ovs)},
\end{equation}
where $d=(d_f+d_g)/2$. If $d_f+d_g$ is odd then both sides in (\ref{E:ipfg}) are $0$. We also have 
 \begin{equation}\label{E:ipfg3}
 \begin{split}
\la f, g\ra_{L^2(\mbr^N, \mu)} 
&=a_{d,N} \la f, g\ra_{L^2(S^{N-1}(\sqrt{N}),\ovs)}
\end{split}
\end{equation}
where
\begin{equation}\label{E:adN}
a_{d,N}=\prod_{j=1}^d\left(1+2\left(\frac{j-1}{N}\right)\right). 
\end{equation}
\end{prop}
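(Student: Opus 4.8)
The plan is to derive the entire proposition as a single corollary of Proposition \ref{P:ip} applied to the product $h := f\overline{g}$. First I would record the two structural facts about $h$. Since $g(tx)=t^{d_g}g(x)$ for real $t$, conjugation gives $\overline{g}(tx)=t^{d_g}\overline{g}(x)$, so $h$ is homogeneous of degree $D:=d_f+d_g$. And because $f$ and $g$ are each square-integrable against the Gaussian measure $\mu$, the Cauchy--Schwarz inequality gives $h\in L^1(\mbr^N,\mu)$, so Proposition \ref{P:ip} does apply to $h$. Note $\la f,g\ra_{L^2(\mbr^N,\mu)}=\int h\,d\mu$ and $\la f,g\ra_{L^2(S^{N-1},\ovs)}=\int_{S^{N-1}}h\,d\ovs$.

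Second, I would dispose of the odd case on its own, since there the half-integer ``$D/2$'' makes the formula awkward to read off directly. If $D$ is odd then $h(-x)=(-1)^{D}h(x)=-h(x)$, while both $\mu$ and $\ovs$ are invariant under the antipodal map $x\mapsto -x$. Hence $\int h\,d\mu=0$ and $\int_{S^{N-1}}h\,d\ovs=0$, so both sides of (\ref{E:ipfg}) vanish, as claimed.

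Third, for $D=2d$ even I would substitute $h=f\overline{g}$ for the function ``$f$'' in Proposition \ref{P:ip}, with the degree parameter there replaced by $D=2d$. The right-hand side becomes $\la f,g\ra_{L^2(\mbr^N,\mu)}$, the spherical integral becomes $\la f,g\ra_{L^2(S^{N-1},\ovs)}$, and the prefactor $2^{D/2}\Gamma\!\left(\tfrac{D+N}{2}\right)/\Gamma\!\left(\tfrac{N}{2}\right)$ is exactly $2^{d}\Gamma\!\left(\tfrac{N}{2}+d\right)/\Gamma\!\left(\tfrac{N}{2}\right)$; rearranging yields (\ref{E:ipfg}) verbatim. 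For (\ref{E:ipfg3}) I would instead apply the radius-$a$ form (\ref{E:ipa}) to $h$ at $a=\sqrt{N}$. Since $a^{D}=(\sqrt{N})^{2d}=N^{d}$, this gives $\la f,g\ra_{L^2(S^{N-1}(\sqrt{N}),\ovs)}=N^{d}\,\tfrac{\Gamma(N/2)}{2^{d}\Gamma(N/2+d)}\,\la f,g\ra_{L^2(\mbr^N,\mu)}$, and inverting identifies the constant as $a_{d,N}=\tfrac{2^{d}\Gamma(N/2+d)}{N^{d}\Gamma(N/2)}$. The last step is the elementary telescoping $\Gamma(N/2+d)/\Gamma(N/2)=\prod_{j=1}^{d}\left(\tfrac{N}{2}+j-1\right)$, which follows from $\Gamma(z+1)=z\Gamma(z)$ and converts $a_{d,N}$ into $\prod_{j=1}^{d}\left(1+\tfrac{2(j-1)}{N}\right)$, matching (\ref{E:adN}).

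I do not expect a genuine obstacle: the proposition is a bookkeeping consequence of Proposition \ref{P:ip}. The only points requiring any care are the integrability of $h$ (supplied by Cauchy--Schwarz), the odd-degree/parity case (handled by antipodal symmetry so that the half-integer exponent never needs interpreting), and keeping straight that the relevant degree is $D=2d$ rather than $d$ when reading off the Gamma-factor.
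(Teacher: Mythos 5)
Your proposal is correct and follows essentially the same route as the paper: apply Proposition \ref{P:ip} to the product $f\overline{g}$ (homogeneous of degree $d_f+d_g$), dispose of the odd case by antipodal symmetry, and obtain $a_{d,N}$ from the functional equation $\Gamma(s+1)=s\Gamma(s)$ combined with the homogeneity rescaling to radius $\sqrt{N}$. The only cosmetic difference is bookkeeping order — the paper first expands the Gamma ratio into $(N+2d-2)(N+2d-4)\cdots N$ and then rescales the sphere, while you invoke the radius-$a$ identity (\ref{E:ipa}) at $a=\sqrt{N}$ and telescope afterwards; your explicit Cauchy--Schwarz remark on the integrability of $f\overline{g}$ is a small point the paper leaves implicit.
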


\begin{proof} The product $f{\bar g}$ is homogeneous of degree $2d$. Then, applying Proposition \ref{P:ip} to the function  $f{\bar g}$ we obtain the formula (\ref{E:ipfg}). If $d_f+d_g$ is odd then $(fg)(-x)=-fg(x)$ for all $x\in\mbr^N$, and so both sides of (\ref{E:ipfg}) are $0$.

Let us then assume that $d_f+d_g$ is even; then $d$ is an integer. Suppose $d\geq 1$. Since
$$\Gamma(s+1)=s\Gamma(s),$$
we have
$$\Gamma(s+d)=(s+d-1)(s+d-2)\ldots s\Gamma(s),$$
and so, with $s=N/2$ we have
\begin{equation}\label{E:GamN}
 \Gamma\left(\frac{N}{2}+d\right)=2^{-d}(N+2d-2)(N+2d-4)\ldots N\Gamma(N/2).
 \end{equation}
 Then the formula  (\ref{E:ipfg}) becomes
 \begin{equation}\label{E:ipfg2}
\la f, g\ra_{L^2(\mbr^N, \mu)}= (N+2d-2)(N+2d-4)\ldots N\la f, g\ra_{L^2(S^{N-1}, \ovs)}.
\end{equation}
If $d=0$ then this equation follows directly from (\ref{E:ipfg}), provided we interpret the right hand side as just $\la f, g\ra_{L^2(S^{N-1}, \ovs)}$.

Using homogeneity of $f$ and $g$ again, we can rewrite the right hand side as an integral over the sphere of radius $\sqrt{N}$:
 \begin{equation}\label{E:ipfg4}
 \begin{split}
&\la f, g\ra_{L^2(\mbr^N, \mu)}\\
 &= \left(1+2\left(\frac{d-1}{N}\right)\right)\left(1+2\left(\frac{d-2}{N}\right)\right)\ldots *1*N^d\la f, g\ra_{L^2(S^{N-1}, \ovs)}\\
&=a_{d,N} \la f, g\ra_{L^2(S^{N-1}(\sqrt{N}),\ovs)}
\end{split}
\end{equation}
where $a_{d,N}$ is as given in (\ref{E:adN}). \end{proof}

   We now show that the pairing
\begin{equation}\label{E:ipN}
\la p, q\ra_{a, N}=\int_{S^{N-1}(a)} p(x)\overline{q(x)}\,d\ovs(x),
\end{equation}
where $ \ovs$ is the unit-mass uniform measure on $S^{N-1}(a)$, gives an inner-product on $\mcp_k$, for any positive integers $k<N$.

\begin{lemma}\label{L:zerof} Let $p\in\mcp_k$ be a polynomial in $k$ variables, let $f$ be the function on $\mbr^{N}$, where $N>k$,  given by
$$f(x_1,\ldots, x_N)=p(x_1,\ldots, x_k)\qquad\hbox{for all $(x_1,\ldots, x_N)\in\mbr^N$.}$$
If $f=0$ on $S^{N-1}(a)$, where $a>0$, then the polynomial $p$ is $0$. The  inner-product $\la\cdot,\cdot\ra_{a, N}$ restricts to an inner-product on the subspace $\mcp_k$ for $k<N$.
\end{lemma}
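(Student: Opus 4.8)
\section*{Proof proposal}

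The plan is to prove the two assertions in sequence, deriving the inner-product statement from the vanishing statement. The heart of the matter is a single geometric observation that crucially uses the strict inequality $N>k$.

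First I would prove the vanishing claim. Consider the projection $\pi\colon\mbr^N\to\mbr^k$ onto the first $k$ coordinates, $\pi(x_1,\ldots,x_N)=(x_1,\ldots,x_k)$. The key point is that the image $\pi\bigl(S^{N-1}(a)\bigr)$ contains the open ball $\{y\in\mbr^k:\norm{y}<a\}$: given any $(x_1,\ldots,x_k)$ with $x_1^2+\cdots+x_k^2<a^2$, we may set $x_{k+2}=\cdots=x_N=0$ and $x_{k+1}=\sqrt{a^2-(x_1^2+\cdots+x_k^2)}$, which is possible precisely because $N-k\geq 1$ provides at least one free coordinate, and the resulting point lies on $S^{N-1}(a)$. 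Since $f$ vanishes on $S^{N-1}(a)$ and $f(x_1,\ldots,x_N)=p(x_1,\ldots,x_k)$, it follows that $p$ vanishes on the nonempty open set $\{y\in\mbr^k:\norm{y}<a\}$.

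Next I would invoke the standard fact that a polynomial in $k$ variables vanishing on a nonempty open subset of $\mbr^k$ is identically zero; one proves this by induction on $k$, freezing all but one variable and using that a one-variable polynomial with infinitely many roots is zero. This yields $p=0$, completing the vanishing claim. I would emphasize here that the strict inequality $N>k$ is essential: when $N=k$ the polynomial $\norm{X}^2-a^2$ vanishes on the sphere without being zero, so a genuine extra dimension is needed for the projection to cover an open set.

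Finally, for the inner-product assertion, sesquilinearity and conjugate symmetry are immediate from linearity of the integral, and $\la p,p\ra_{a,N}=\int_{S^{N-1}(a)}\lvert p\rvert^2\,d\ovs\geq 0$. For positive-definiteness, suppose $\la p,p\ra_{a,N}=0$. Since the integrand $\lvert p\rvert^2$ is continuous and non-negative and $\ovs$ has full support on $S^{N-1}(a)$, the function $f$ vanishes identically on $S^{N-1}(a)$; the vanishing claim just established then gives $p=0$. Thus $\la\cdot,\cdot\ra_{a,N}$ restricts to an inner product on $\mcp_k$. The only mild subtlety, and the step I would treat most carefully, is the surjectivity of $\pi$ onto the open ball together with the correct bookkeeping of the role of $N>k$; everything else is routine.
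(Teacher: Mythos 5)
Your proposal is correct and follows essentially the same route as the paper: both arguments lift an arbitrary point of the open ball of radius $a$ in $\mbr^k$ to the point $\bigl(x_1,\ldots,x_k,\sqrt{a^2-x_1^2-\cdots-x_k^2},0,\ldots,0\bigr)$ on $S^{N-1}(a)$, conclude that $p$ vanishes on an open set and hence is the zero polynomial, and then deduce positive-definiteness of $\la\cdot,\cdot\ra_{a,N}$ on $\mcp_k$ from this vanishing claim. Your write-up is slightly more explicit than the paper's in justifying that $\la p,p\ra_{a,N}=0$ forces $p$ to vanish pointwise on the sphere (continuity plus full support of $\ovs$), but this is the same argument.
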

\begin{proof} Let $(x_1,\ldots, x_k)$ be any point within the open ball of radius $a$ in $\mbr^k$. Then 
$$p(x_1,\ldots, x_k)= f\bigl(x_1,\ldots, x_k, \sqrt{a^2-\norm{x}^2},0,\ldots, 0)=0,$$
where $\norm{x}^2=\sqrt{x_1^2+\ldots +x_k^2}$.
A polynomial function in $\mbr^k$ that is zero on an open set is identically zero, and so $p$ is the zero polynomial.

If $\norm{p}_{a, N}^2$ is $0$ then the evaluation of $p$ at every $x\in S^{N-1}(a)$ is zero, and so  $p$ is the zero polynomial. {Therefore $\la\cdot,\cdot\ra_{a, N}$ restricts to an inner-product on the subspace $\mcp_k$ for $k<N$.} \end{proof}

\subsection{Gaussian integration as a limit of spherical integration}\label{ss:sphgau} There is a well-known relationship between Gaussian integration in infinite dimensions and integration over large spheres (see, for example,  \cite{SenGRL2016, SenGRL2018}). Here we focus on the special case of polynomial functions.  

We use the product Gaussian measure $\mu$ on $\mbr^\infty$, the space of all real  sequences. The measure $\mu$ is supported on much smaller subspaces of $\mbr^\infty$, but we do not need to bring such subspaces in for our purposes here.

\begin{theorem}\label{T:ipNinfty}
Suppose $p$ and $q$ are    polynomial functions on $\mbr^k$, viewed also as functions on $\mbr^N$ for $N>k$ in terms of the first $k$ coordinates. Then
\begin{equation}\label{E:ipfg4a}
 \lim_{N\to\infty} \la p, q\ra_{L^2(S^{N-1}(\sqrt{N}),\ovs)}=  \la p, q\ra_{L^2(\mbr^\infty, \mu)},
\end{equation}
with notation as before.
\end{theorem}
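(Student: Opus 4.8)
The plan is to reduce the statement to the case of homogeneous polynomials, where the exact relation between spherical and Gaussian inner products established in Proposition \ref{P:ipsph} applies, and then to let $N\to\infty$ in that relation. First I would decompose $p$ and $q$ into their finitely many homogeneous components, $p=\sum_{d_1}p_{d_1}$ and $q=\sum_{d_2}q_{d_2}$. By sesquilinearity of both pairings, $\la p,q\ra_{L^2(S^{N-1}(\sqrt{N}),\ovs)}=\sum_{d_1,d_2}\la p_{d_1},q_{d_2}\ra_{L^2(S^{N-1}(\sqrt{N}),\ovs)}$ and likewise for the Gaussian pairing, so it suffices to prove the limiting identity for each pair of homogeneous components separately; the finiteness of the double sum then lets me pass the limit through it term by term.

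Next I would apply formula (\ref{E:ipfg3}) of Proposition \ref{P:ipsph} to the homogeneous functions $p_{d_1}$ and $q_{d_2}$, giving $\la p_{d_1},q_{d_2}\ra_{L^2(\mbr^N,\mu)}=a_{d,N}\la p_{d_1},q_{d_2}\ra_{L^2(S^{N-1}(\sqrt{N}),\ovs)}$ with $d=(d_1+d_2)/2$ when $d_1+d_2$ is even (and with both sides vanishing when it is odd, in which case there is nothing to prove). The crucial observation is that the left-hand Gaussian inner product over $\mbr^N$ is independent of $N$ once $N\geq k$: since $p_{d_1}$ and $q_{d_2}$ depend only on the first $k$ coordinates and the standard Gaussian measure is a product measure, the integrals over the remaining $N-k$ coordinates contribute only factors of $1$. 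Hence this quantity equals $\la p_{d_1},q_{d_2}\ra_{L^2(\mbr^\infty,\mu)}$.

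It then remains to take the limit. Solving the relation for the spherical pairing gives $\la p_{d_1},q_{d_2}\ra_{L^2(S^{N-1}(\sqrt{N}),\ovs)}=a_{d,N}^{-1}\la p_{d_1},q_{d_2}\ra_{L^2(\mbr^\infty,\mu)}$, and since $a_{d,N}=\prod_{j=1}^d\bigl(1+2(j-1)/N\bigr)$ is a fixed finite product whose factors each tend to $1$ as $N\to\infty$, we have $a_{d,N}\to 1$. This yields the desired limit for each homogeneous pair, and summing over the finitely many pairs completes the argument. The proof is essentially bookkeeping built on Proposition \ref{P:ipsph}; the only point requiring genuine care—and the step I expect to be the main obstacle to state cleanly—is the $N$-independence of the Gaussian side, which rests on the product structure of $\mu$ together with the fact that $p$ and $q$ involve only finitely many of the variables.
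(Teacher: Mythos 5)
Your proposal is correct and takes essentially the same route as the paper: reduce to homogeneous components by sesquilinearity, apply formula (\ref{E:ipfg3}) of Proposition \ref{P:ipsph}, and let $N\to\infty$ using $a_{d,N}\to 1$. You merely make explicit two points the paper's terser proof leaves implicit—the $N$-independence of the Gaussian side, which follows from the product structure of $\mu$ since $p$ and $q$ involve only the first $k$ variables, and the vanishing of both sides when the degrees have odd sum.
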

Recall that by Lemma \ref{L:zerof}, $\la \cdot,\cdot\ra_{L^2(S^{N-1}(a))}$ is an inner-product on the space of polynomials in $X_1,\ldots, X_k$, for any $k<N$. So, restricting to the case of $p, q\in\mcp^d_k$, the result (\ref{E:ipfg4a}) says that the inner-product $\la \cdot,\cdot\ra_{L^2(S^{N-1}(a))}$ converges to the Gaussian inner-product on $\mcp^d_k$ for all  integers $d\geq 0$ and $k\geq 1$.
\begin{proof} 
By linearity we may assume that $p$ and $q$ are homogeneous, since a general polynomial is a sum of homogeneous monomials. Then using the identity (\ref{E:ipfg3}) and observing that $\lim_{N\to\infty}a_{d,N}=1$ we obtain (\ref{E:ipfg4a}).
\end{proof}

\section{Harmonic polynomials and operators on polynomial spaces}\label{s:poly}

We   work with the polynomial spaces $\mcp^d_k$. As noted in the introduction, the subject of harmonic polynomials is classical and there are many treatments of the subject available, both old and recent (Stein and Weiss \cite[Chapter 4]{SW1971} or  Kellogg \cite[Chapter V]{Kell1929}, to name just two). However, for our objectives it is necessary to develop the subject in a way that does not focus on functions on a fixed sphere in a fixed dimension. For this reason we develop the ideas and results in terms of the algebra of polynomials.

\subsection{Decomposition of polynomials using harmonic components}\label{ss:harm}  The results of this subsection are well-known, but we include them here both for ease of reference and to help set up the framework we will be using later. 

Let $\la\cdot,\cdot\ra_h$ be the `Hermite inner-product' on the space $\mcp$ of polynomials, specified by
\begin{equation}\label{E:herip}
\la X_1^{j_1}\ldots X_m^{j_m}, X_1^{k_1}\ldots X_m^{k_m}\ra_h = j_1!\ldots j_m!\delta_{jk},
\end{equation}
where $\delta_{jk}$ is $1$ if $j$ and $k$ are equal as finite sequences and is $0$ otherwise. The reason we call this the Hermite inner-product is because
\begin{equation}
\begin{split}
&
\la X_1^{j_1}\ldots X_m^{j_m}, X_1^{k_1}\ldots X_m^{k_m}\ra_h 
\\
&=\la  H_{j_1}(X_1)\ldots H_{j_m}(X_m), H_{k_1}(X_1)\ldots H_{k_m}(X_m)\ra_{L^2(\mu)}, 
\end{split}
\end{equation}
where on the right we use the Gaussian inner-product from Theorem \ref{T:ipNinfty} and $H_n(X)$ is the standard Hermite polynomial. In more detail, $H_n(X)$ is the component of $X^n$ that is $L^2(\mu)$-orthogonal to the subspace spanned by $1, X, X^2, \ldots, X^{n-1}$.

 Then the operator
$ \partial_j$, the derivative with respect to $X_j$, has the adjoint given by multiplication by $X_j$:
\begin{equation}
\la \partial_jp, q\ra_h=\la p, X_jq\ra_h,
\end{equation}
for all polynomials $p$ and $q$, as can be readily verified by taking $p$ and $q$ to be monomials. Then for the Laplacian
$$\Delta_N=\sum_{j=1}^N\partial_j^2$$
the adjoint is obtained from:
\begin{equation}\label{E:LaplNh}
\la \Delta_Np, q\ra_h=\la p, \norm{X}^2q\ra_h
\end{equation}
for all $p, q\in\mcp_N$. Thus the adjoint of $\Delta_N$ is given by multiplication by 
$$ \norm{X}^2=X_1^2+\ldots+X_N^2.$$
Using this adjoint relation we have the following standard result:
\begin{prop}\label{P:splitharm}
For any integers $N\geq 1$ and $d\geq 2$, the space $\mcp^d_N$ of polynomials in $X_1,\ldots, X_N$ that are homogeneous of degree $d$ splits as a direct sum:
\begin{equation}\label{E:mcpNmchN}
\mcp^d_N=\mch^d_N\oplus \norm{X}^2\mcp^{d-2}_N,
\end{equation}
with these subspaces being orthogonal with respect to the Hermite inner-product $\la\cdot,\cdot\ra_h$.  Moreover,
 \begin{equation}\label{E:splitmchleq}
 \mcp^{\leq d}_N=\mch^{\leq d}_N\oplus \norm{X}^2\mcp^{\leq d-2}_N.
 \end{equation}

\end{prop}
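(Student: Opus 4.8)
The plan is to exploit the adjoint relation (\ref{E:LaplNh}), which says that multiplication by $\norm{X}^2$ is the Hermite-adjoint of $\Delta_N$, together with the facts that each $\mcp^d_N$ is finite-dimensional and that $\la\cdot,\cdot\ra_h$ is genuinely positive definite on it. Positive definiteness is immediate from (\ref{E:herip}): the monomials form an orthogonal basis with strictly positive squared norms $j_1!\cdots j_m!$. Hence on the finite-dimensional space $\mcp^d_N$ one has the orthogonal splitting $V=W\oplus W^\perp$ for any subspace $W$, and the proof reduces to identifying $W=\norm{X}^2\mcp^{d-2}_N$ and computing its complement. First I would record orthogonality directly: for $h\in\mch^d_N$ and $p\in\mcp^{d-2}_N$, the identity (\ref{E:LaplNh}) gives $\la h,\norm{X}^2p\ra_h=\la\Delta_N h,p\ra_h=0$, since $\Delta_N h=0$. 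Thus $\mch^d_N\perp\norm{X}^2\mcp^{d-2}_N$, and positive definiteness then forces $\mch^d_N\cap\norm{X}^2\mcp^{d-2}_N=\{0\}$, so the sum in (\ref{E:mcpNmchN}) is direct.

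The substantive step --- and the one I expect to be the main obstacle --- is to show that these two subspaces together exhaust $\mcp^d_N$. Rather than a dimension count, I would compute the orthogonal complement of $W=\norm{X}^2\mcp^{d-2}_N$ inside $\mcp^d_N$ and show it equals $\mch^d_N$. A polynomial $q\in\mcp^d_N$ lies in $W^\perp$ iff $\la q,\norm{X}^2p\ra_h=0$ for every $p\in\mcp^{d-2}_N$, which by (\ref{E:LaplNh}) is equivalent to $\la\Delta_N q,p\ra_h=0$ for every such $p$. Since $\Delta_N q\in\mcp^{d-2}_N$, taking $p=\Delta_N q$ and invoking positive definiteness on $\mcp^{d-2}_N$ shows this holds iff $\Delta_N q=0$, i.e. iff $q\in\mch^d_N$. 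Therefore $W^\perp=\mch^d_N$, and the finite-dimensional orthogonal decomposition $\mcp^d_N=W\oplus W^\perp$ is precisely (\ref{E:mcpNmchN}). The elegant feature here is that surjectivity, usually the delicate part of this classical result, is delivered automatically by the complement computation, with no separate counting of dimensions.

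For the filtered statement (\ref{E:splitmchleq}) I would sum over degrees. Because monomials of distinct total degree are $\la\cdot,\cdot\ra_h$-orthogonal, one has the orthogonal decomposition $\mcp^{\leq d}_N=\bigoplus_{e=0}^d\mcp^e_N$. Every polynomial of degree $0$ or $1$ is harmonic, so $\mcp^e_N=\mch^e_N$ for $e\in\{0,1\}$, while $\mcp^e_N=\mch^e_N\oplus\norm{X}^2\mcp^{e-2}_N$ for $2\le e\le d$ by the first part. Regrouping the harmonic summands into $\mch^{\leq d}_N$ and the remaining summands into $\norm{X}^2\bigoplus_{e=2}^d\mcp^{e-2}_N=\norm{X}^2\mcp^{\leq d-2}_N$ then yields (\ref{E:splitmchleq}).
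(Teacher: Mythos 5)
Your proof is correct, and it rests on the same two pillars as the paper's: the Hermite-adjoint relation (\ref{E:LaplNh}) identifying multiplication by $\norm{X}^2$ as the adjoint of $\Delta_N$, plus finite-dimensional orthogonal decomposition. The difference is in how the linear-algebra core is packaged. The paper proves a standalone lemma (Lemma \ref{L:splitvT}): for a linear map $T:V\to W$ of finite-dimensional inner-product spaces, $V=\ker T\oplus {\rm Im}(T^*)$, established constructively by showing $TT^*$ is invertible on ${\rm Im}(T)$ and exhibiting the explicit splitting $v=[v-T^*(TT^*)^{-1}Tv]+T^*(TT^*)^{-1}Tv$; it then applies this with $T=\Delta_N:\mcp^d_N\to\mcp^{d-2}_N$ (and, for (\ref{E:splitmchleq}), with $T=\Delta_N:\mcp^{\leq d}_N\to\mcp^{\leq d-2}_N$). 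You instead take the existence of orthogonal complements, $V=W\oplus W^\perp$, as the standard black box and identify $W^\perp$ for $W=\norm{X}^2\mcp^{d-2}_N$ directly: $q\perp W$ iff $\la \Delta_N q,p\ra_h=0$ for all $p\in\mcp^{d-2}_N$, iff $\Delta_N q=0$ by taking $p=\Delta_N q$. Your route is shorter and avoids inverting $TT^*$, at the cost of outsourcing the complement decomposition to general theory; the paper's lemma buys an explicit formula for the projection onto ${\rm Im}(T^*)$, which is in the constructive spirit of the base-$\norm{X}^2$ expansion (\ref{E:pexpand}) used immediately afterward. Your degree-by-degree regrouping for the filtered statement (\ref{E:splitmchleq}) is also spelled out more fully than the paper's one-line remark, and is a valid way to deduce it from the graded case; just note that injectivity of multiplication by $\norm{X}^2$ (no zero divisors in $\mcp_N$) is what keeps the regrouped sum $\norm{X}^2\mcp^{\leq d-2}_N$ intact, though directness already follows from the homogeneous decomposition.
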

\begin{proof}  The decomposition (\ref{E:mcpNmchN}) follows by applying the linear algebra result Lemma \ref{L:splitvT} below with $T=\Delta_N:\mcp^d_N\to\mcp^{d-2}_N$. The decomposition (\ref{E:splitmchleq}) follows as a consequence or by applying Lemma \ref{L:splitvT} below with $T=\Delta_N:\mcp^{\leq d}_N\to\mcp^{\leq d-2}_N$.
\end{proof}

  We have used the following  elementary result in linear algebra.  
 \begin{lemma}\label{L:splitvT}
 Suppose $T:V\to W$ is a linear mapping between finite-dimensional inner-product spaces. Then
 $TT^*:W\to W$ maps ${\rm Im}(T)$ isomorphically onto itself. Moreover, for any $v\in V$ we have
  \begin{equation}\label{E:splitvT}
 v =[v-T^*(TT^*)^{-1}Tv] \,+\, T^*(TT^*)^{-1}Tv,
 \end{equation}
 where the first term on the right hand side is in $\ker T$ and the second term is in ${\rm Im}(T^*)$. The subspaces $\ker T$ and ${\rm Im}(T^*)$ are mutually orthogonal and their direct sum is $V$:
 \begin{equation}\label{E:kerTsum}
 V= (\ker T)\oplus {\rm Im}(T^*).
 \end{equation}
 \end{lemma}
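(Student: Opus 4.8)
The plan is to prove the three assertions of Lemma \ref{L:splitvT} in the order: first the orthogonal direct sum decomposition $V = (\ker T)\oplus {\rm Im}(T^*)$, then the invertibility of $TT^*$ restricted to ${\rm Im}(T)$, and finally the explicit formula (\ref{E:splitvT}), which is really a corollary of the first two.

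First I would establish (\ref{E:kerTsum}). The cleanest route is the standard adjoint identity $\ker T = ({\rm Im}\,T^*)^\perp$. To see this, observe that $v\in\ker T$ iff $Tv=0$ iff $\la Tv, w\ra = 0$ for all $w\in W$ iff $\la v, T^*w\ra = 0$ for all $w\in W$, which says exactly that $v$ is orthogonal to ${\rm Im}(T^*)$. Since $V$ is finite-dimensional, a subspace and its orthogonal complement sum to the whole space, giving $V = {\rm Im}(T^*)\oplus({\rm Im}\,T^*)^\perp = {\rm Im}(T^*)\oplus\ker T$ as an orthogonal direct sum. This settles both the orthogonality claim and the direct-sum claim in (\ref{E:kerTsum}) at once.

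Next I would show that $TT^*$ maps ${\rm Im}(T)$ isomorphically onto itself. The key point is that $T$ restricts to an isomorphism from ${\rm Im}(T^*)$ onto ${\rm Im}(T)$: indeed $T$ is injective on ${\rm Im}(T^*)$ because its kernel $\ker T$ meets ${\rm Im}(T^*)$ only in $0$ by the orthogonality just proved, and it is surjective onto ${\rm Im}(T)$ since $T(V) = T({\rm Im}(T^*))$ using $V = {\rm Im}(T^*)\oplus\ker T$. Dually $T^*$ maps ${\rm Im}(T)$ isomorphically onto ${\rm Im}(T^*)$ by the same argument with the roles reversed. Composing, $TT^*$ maps ${\rm Im}(T)$ isomorphically onto ${\rm Im}(T)$. (Equivalently one can argue directly that $TT^*w=0$ with $w\in{\rm Im}(T)$ forces $\norm{T^*w}^2 = \la TT^*w, w\ra = 0$, hence $T^*w=0$, hence $w\in\ker T^* = ({\rm Im}\,T)^\perp$, so $w=0$ as $w\in{\rm Im}(T)$; injectivity on the finite-dimensional space ${\rm Im}(T)$ then gives the isomorphism.) This justifies the inverse $(TT^*)^{-1}$ appearing in the formula, understood as the inverse of the restriction $TT^*|_{{\rm Im}(T)}$.

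Finally, for (\ref{E:splitvT}), note that for any $v\in V$ the element $Tv$ lies in ${\rm Im}(T)$, so $(TT^*)^{-1}Tv$ is well-defined and $T^*(TT^*)^{-1}Tv\in{\rm Im}(T^*)$, giving the second summand. The first summand $v - T^*(TT^*)^{-1}Tv$ lies in $\ker T$, which I would verify by applying $T$: using that $TT^*(TT^*)^{-1}$ is the identity on ${\rm Im}(T)$ and $Tv\in{\rm Im}(T)$, we get $T\bigl(v - T^*(TT^*)^{-1}Tv\bigr) = Tv - TT^*(TT^*)^{-1}Tv = Tv - Tv = 0$. Thus (\ref{E:splitvT}) exhibits $v$ as a sum of an element of $\ker T$ and an element of ${\rm Im}(T^*)$, consistent with the decomposition (\ref{E:kerTsum}). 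I do not anticipate a genuine obstacle here; the only point requiring care is being explicit that $(TT^*)^{-1}$ means the inverse of the restriction to ${\rm Im}(T)$ rather than of $TT^*$ on all of $W$, since $TT^*$ need not be invertible on $W$ when $T$ is not surjective. Everything else is the routine finite-dimensional adjoint bookkeeping, and establishing (\ref{E:kerTsum}) first makes the rest fall out immediately.
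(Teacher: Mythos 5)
Your proof is correct, and it is organized differently from the paper's. The paper's proof concentrates entirely on the one point it regards as nontrivial, namely that $(TT^*)^{-1}Tv$ makes sense: it observes that $TT^*$ maps ${\rm Im}(T)$ into itself and proves injectivity there by the direct norm computation $\norm{T^*Tv}^2=\la TT^*(Tv),Tv\ra=0$ followed by $\norm{Tv}^2=\la T^*Tv,v\ra=0$; everything else in the lemma --- that the two terms of (\ref{E:splitvT}) lie in $\ker T$ and ${\rm Im}(T^*)$, the orthogonality, and the decomposition (\ref{E:kerTsum}) --- is left as ``readily verified.'' You instead lead with the adjoint identity $\ker T=({\rm Im}\,T^*)^\perp$, deduce (\ref{E:kerTsum}) immediately from finite-dimensionality, and then obtain the invertibility of $TT^*|_{{\rm Im}(T)}$ structurally, as the composition of the two restricted isomorphisms $T^*:{\rm Im}(T)\to{\rm Im}(T^*)$ and $T:{\rm Im}(T^*)\to{\rm Im}(T)$; your parenthetical alternative argument is essentially the paper's computation in disguise. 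Your ordering buys completeness and economy: once (\ref{E:kerTsum}) is in hand, the isomorphism and the membership claims in (\ref{E:splitvT}) all follow with almost no computation, and you verify every assertion of the lemma explicitly, including the step $TT^*(TT^*)^{-1}Tv=Tv$ needed to see that the first term is in $\ker T$. The paper's route is shorter on the page but silently delegates the decomposition to the reader. Your closing remark --- that $(TT^*)^{-1}$ must be read as the inverse of the restriction of $TT^*$ to ${\rm Im}(T)$, not of $TT^*$ on all of $W$ --- is exactly the right caveat, and the paper makes it only implicitly.
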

 \begin{proof} Let us first check that the term $(TT^*)^{-1}Tv$ on the right hand side makes sense. Since the image of  $TT^*$ is contained in ${\rm Im}(T)$ it follows that it maps this subspace into itself. Since we are working in finite dimensions, $(TT^*)$ will map ${\rm Im}(T)$ surjectively onto itself if   $TT^*$ is injective when restricted to ${\rm Im}(T)$. We verify this injectivity.  If $Tv\in \ker(TT^*)$ then 
$$\norm{T^*Tv}^2=\la T^*Tv, T^*Tv\ra=\la TT^*(Tv), Tv\ra =\la 0, Tv\ra=0,$$
and so $T^*Tv=0$, which then implies
$$\norm{Tv}^2=\la Tv, Tv\ra=\la T^*Tv,v\ra=0.$$
Thus, if $Tv\in\ker({TT^*})$ then $Tv=0$. So  {$TT^*$} maps ${\rm Im}(T)$ isomorphically onto itself. Hence, the terms on the right hand side of (\ref{E:splitvT}) are all meaningful. The remaining statements are all readily verified.
  \end{proof}

 Thus,
  every    polynomial $p$ can be `divided' by $\norm{X}^2$ to leave a remainder that is harmonic:
\begin{equation}\label{E:pp0q1}
p=p_0+\norm{X}^2q,
\end{equation}
  where $p_0\in\mch_N$ and $q\in\mcp_{ N}$, with the degree of $q$ being less than the degree of $p$.
  
  \begin{prop}\label{P:harmdiv}
  For any $p\in\mcp_N$ there is a unique   $p_0\in\mch_N$ and a unique $q\in \mcp_N$ such that
  \begin{equation}
  p=p_0+\norm{X}^2q.
  \end{equation}
  \end{prop}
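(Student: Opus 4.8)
The plan is to derive both existence and uniqueness from the graded decomposition already established in Proposition \ref{P:splitharm}. The only genuinely new issue is that here $p_0$ and $q$ are allowed to be arbitrary elements of $\mch_N$ and $\mcp_N$, rather than being confined to a single total degree, so I must check that this extra freedom does not spoil uniqueness.

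For existence I would let $D$ be the total degree of $p$, so that $p\in\mcp^{\leq D}_N$, and then simply invoke the decomposition \eqref{E:splitmchleq}. This yields $p=p_0+\norm{X}^2q$ with $p_0\in\mch^{\leq D}_N\subseteq\mch_N$ and $q\in\mcp^{\leq D-2}_N\subseteq\mcp_N$, which is exactly the asserted form. (Equivalently, one could split each homogeneous component of $p$ using \eqref{E:mcpNmchN} and add the pieces.)

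For uniqueness it suffices to show that if $h+\norm{X}^2r=0$ with $h\in\mch_N$ and $r\in\mcp_N$, then $h=0$ and $r=0$; the general statement then follows by subtracting two representations of the same $p$. I would expand $h=\sum_d h^{(d)}$ and $r=\sum_d r^{(d)}$ into homogeneous components, with $h^{(d)}\in\mch^d_N$ and $r^{(d)}\in\mcp^d_N$. Since $\norm{X}^2r^{(d)}$ is homogeneous of degree $d+2$, collecting the degree-$e$ part of the relation gives, for every $e$,
\begin{equation}
h^{(e)}+\norm{X}^2r^{(e-2)}=0,
\end{equation}
with the convention $r^{(e-2)}=0$ when $e<2$. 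The first term lies in $\mch^e_N$ and the second in $\norm{X}^2\mcp^{e-2}_N$, so the directness of the sum in \eqref{E:mcpNmchN} forces both to vanish: $h^{(e)}=0$ and $\norm{X}^2r^{(e-2)}=0$ for all $e$.

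The one step carrying actual content is passing from $\norm{X}^2r^{(e-2)}=0$ to $r^{(e-2)}=0$, and here I would note that multiplication by $\norm{X}^2$ is injective on $\mcp_N$ because $\mcp_N$ is an integral domain and $\norm{X}^2\neq 0$. Summing over $e$ then gives $h=0$ and $r=0$, completing the uniqueness proof. The main obstacle, such as it is, is purely the bookkeeping needed to confirm that permitting unbounded degree in $p_0$ and $q$ creates no spurious non-uniqueness; the degree-by-degree reduction to \eqref{E:mcpNmchN} together with the injectivity of multiplication by $\norm{X}^2$ handles this cleanly.
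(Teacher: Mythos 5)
Your proof is correct, but your uniqueness argument takes a different route from the paper's. The paper proves uniqueness with a single inner-product computation: if $\norm{X}^2q$ is harmonic, then by the adjoint relation (\ref{E:LaplNh}),
\begin{equation*}
\la\norm{X}^2q,\norm{X}^2q\ra_h=\la \Delta_N(\norm{X}^2q),q\ra_h=0,
\end{equation*}
so $\norm{X}^2q=0$ and hence $q=0$; this works directly with inhomogeneous polynomials and needs no degree bookkeeping. You instead derive uniqueness formally from the graded decomposition: split into homogeneous components (using the fact, noted in the paper's notation section, that the homogeneous parts of a harmonic polynomial are harmonic), apply the directness of $\mcp^e_N=\mch^e_N\oplus\norm{X}^2\mcp^{e-2}_N$ degree by degree, and finish with injectivity of multiplication by $\norm{X}^2$ in the integral domain $\mcp_N$. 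Both arguments are sound, and ultimately both rest on Proposition \ref{P:splitharm} (whose directness was itself proved via the same adjoint relation the paper reuses); what the paper's version buys is brevity and no case analysis, while yours buys the observation that the inhomogeneous statement is a purely formal consequence of the homogeneous one, with no further appeal to the Hermite inner product. Your existence argument via (\ref{E:splitmchleq}) coincides with the paper's.
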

  \begin{proof} We have already seen the existence of $p_0$ and $q$. For uniqueness, suppose that $q\in\mcp_N$ is such that $\norm{X}^2q$ is harmonic; then by the adjoint relation (\ref{E:LaplNh}) we have
  \begin{equation}
  \la\norm{X}^2q, \norm{X}^2q\ra_h =\la \Delta_N(\norm{X}^2q), q\ra_h=\la 0, q\ra_h=0.
  \end{equation}
  Thus $\norm{X}^2q=0$ and so $q=0$.

  \end{proof}

  Inductively, we can `expand' $p\in \mcp^{\leq d}_N$ in `base' $\norm{X}^2$:
  \begin{equation}\label{E:pexpand}
  p=p_0+\norm{X}^2p_1+\ldots+ \norm{X}^{2s}p_{s},
  \end{equation}
 with $s$ being the largest integer $\leq d/2$, where $p_0, p_1,\ldots, p_s$ are harmonic  polynomials, with $p_k\in \mcp^{\leq d-2k}_N$.

\begin{prop}\label{P:harmsphere}
Given a polynomial function $p$ on $\mbr^N$ and a sphere in $\mbr^N$, there is a unique harmonic polynomial $L_ap$ which coincides as a function with $p$ on the sphere. The polynomial $L_ap$ is given by 
\begin{equation}\label{E:pstar}
L_ap=p_0+a^2p_1+\ldots+a^{2s}p_s,
\end{equation}
where the notation is as in (\ref{E:pexpand}). The degree of $L_ap$ is $\leq $ the degree of $p$.
\end{prop}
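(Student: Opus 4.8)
The plan is to treat existence and uniqueness separately, using the expansion (\ref{E:pexpand}) as the starting point for existence and the algebraic splitting (\ref{E:mcpNmchN}) as the engine for uniqueness. Throughout I would regard the sphere as $S^{N-1}(a)$, on which the polynomial $\norm{X}^2$ takes the constant value $a^2$; this single observation is what drives the whole construction.

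For existence I would simply set $L_ap=p_0+a^2p_1+\ldots+a^{2s}p_s$, built from the harmonic components $p_0,\ldots,p_s$ produced by (\ref{E:pexpand}); these components are uniquely determined by iterating Proposition \ref{P:harmdiv}, so the formula is unambiguous. Three routine verifications then remain. First, $L_ap$ is harmonic, since each $p_k$ lies in $\mch_N=\ker\Delta_N$ and this is a linear subspace, so the scalar combination $\sum_k a^{2k}p_k$ is again annihilated by $\Delta_N$. Second, $L_ap$ agrees with $p$ on $S^{N-1}(a)$: evaluating the polynomial identity (\ref{E:pexpand}) at a point $x$ of the sphere, where $\norm{x}^2=a^2$, gives $p(x)=\sum_k\norm{x}^{2k}p_k(x)=\sum_k a^{2k}p_k(x)=(L_ap)(x)$. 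Third, with $d=\deg p$ each $p_k\in\mcp^{\leq d-2k}_N$ has degree $\leq d$, so their combination $L_ap$ has degree $\leq d$.

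The substantive part is uniqueness, which reduces to showing that the only harmonic polynomial vanishing identically on $S^{N-1}(a)$ is $0$; applying this to the difference of two candidate harmonic extensions then finishes the argument. Here I would invoke Proposition \ref{P:polysphere}: if $h\in\mch_N$ vanishes on $S^{N-1}(a)$, then $h=(\norm{X}^2-a^2)g$ for some $g\in\mcp_N$. Suppose $h\neq 0$ and put $d=\deg h$; then $g\neq 0$, and the top homogeneous component of $h$ equals $\norm{X}^2\,g^{(d-2)}$, where $g^{(d-2)}\neq 0$ is the top homogeneous component of $g$. On the one hand this leading part lies in $\norm{X}^2\mcp^{d-2}_N$; on the other hand, because $\Delta_N$ maps $\mcp^d_N$ into $\mcp^{d-2}_N$ and hence acts componentwise on the grading, the top component of a harmonic polynomial is itself harmonic, so it lies in $\mch^d_N$. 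By the orthogonal splitting $\mcp^d_N=\mch^d_N\oplus\norm{X}^2\mcp^{d-2}_N$ of Proposition \ref{P:splitharm}, these two subspaces meet only in $0$, forcing the leading part to vanish, a contradiction. Hence $h=0$.

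I expect the main obstacle to be this uniqueness step, and specifically the clean handling of the leading homogeneous component: one must be sure that $h$ being harmonic really forces its top part to be harmonic (which is exactly the componentwise action of $\Delta_N$ on the grading), and that the factorization supplied by Proposition \ref{P:polysphere} is genuinely at hand. An alternative, more analytic route would deduce uniqueness from the maximum principle for harmonic functions on the ball bounded by $S^{N-1}(a)$, but I would prefer the degree-and-splitting argument above in order to keep the proof inside the purely algebraic framework of the paper.
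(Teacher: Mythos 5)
Your existence argument and the degree bound are correct and coincide with the paper's proof. The problem is the uniqueness step: you invoke Proposition \ref{P:polysphere} to factor a harmonic polynomial $h$ vanishing on $S^{N-1}(a)$ as $h=(\norm{X}^2-a^2)g$, but in the paper Proposition \ref{P:polysphere} comes \emph{after} Proposition \ref{P:harmsphere} and its proof depends on it in an essential way: it applies $L_a$ to $p$ and then concludes $L_ap=0$ precisely ``by the argument in the proof of Proposition \ref{P:harmsphere}.'' So, within the paper's logical development, your argument is circular -- you are using a corollary of the very uniqueness statement you are trying to establish. The paper remarks that an independent, entirely algebraic proof of Proposition \ref{P:polysphere} exists (due to Keith Conrad), but no such proof appears in the paper, so you would have to supply one before your route becomes legitimate.

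The irony is that the ``alternative, more analytic route'' you mention and set aside is exactly the paper's proof: a harmonic function vanishing on the boundary sphere vanishes on the ball it bounds (maximum principle), and a polynomial vanishing on an open ball is the zero polynomial. That argument is short and non-circular, and it is the one you should adopt. I would add that your algebraic mechanism in itself is sound: granting the factorization $h=(\norm{X}^2-a^2)g$, the observation that the top homogeneous component of $h$ is simultaneously harmonic (since $\Delta_N$ respects the grading) and of the form $\norm{X}^2 g^{(d-2)}$, contradicting the splitting $\mcp^d_N=\mch^d_N\oplus\norm{X}^2\mcp^{d-2}_N$ of Proposition \ref{P:splitharm}, is correct. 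So if you do want a purely algebraic proof that stays inside the paper's framework, the missing ingredient is a self-contained proof of Proposition \ref{P:polysphere} that does not pass through $L_a$; without it, the uniqueness step has a genuine gap.
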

If $p=\norm{X}^2$ then $L_ap$ is the constant $a^2$. Thus the degree of $L_ap$ need not be equal to the degree of $p$.

  \begin{proof}  The polynomial $L_ap$  is harmonic because each term on the right hand side in (\ref{E:pstar})  is a harmonic polynomial. Moreover, by (\ref{E:pexpand}), 
$$\hbox{$L_ap(x)=p(x)$  for all $x\in S^{N-1}(a)$.}$$
Thus $L_ap$ is a harmonic polynomial that agrees with $p$ pointwise on the sphere $S^{N-1}(a)$. 

For the uniqueness statement, suppose $p_*$ is a harmonic polynomial  that agrees pointwise with $p$ on the sphere $S^{N-1}(a)$, for some $a>0$.  Then $L_ap-p_*$ is a harmonic polynomial that evaluates to $0$ at each point of $S^{N-1}(a)$. 
  By a result from analysis, a harmonic function that vanishes on the boundary of a ball is equal to zero in the interior of the ball. Thus   $L_ap-p_*$ evaluates to zero in the interior of the ball of radius $a$, and hence is the zero polynomial. (Any polynomial  in $N$ variables that evaluates to $0$ at all points in an open ball is identically zero; this can be proved by induction on $N$ and the fact that any nonzero one-variable polynomial has finitely many zeros.)
  
   Let $p$ be of degree $d$. Then from (\ref{E:pexpand}) and (\ref{E:pstar}) we see that  the degree of $L_ap$ is $\leq d$, being equal to $d$ if and only if $p_0\neq 0$; indeed, if $p_0\neq 0$ then the part of $p$ that is homogeneous of highest degree is the same as that of $L_ap$.
    \end{proof}

  \begin{prop}\label{P:polysphere} If a polynomial $p\in \mbc[X_1,\ldots, X_N]$ evaluates to $0$ at all points on $S^{N-1}(a)$, for some $a>0$, then there is a polynomial $q\in  \mbc[X_1,\ldots, X_N]$  such that $p=(X_1^2+\ldots +X_N^2-a^2)q$.
  \end{prop}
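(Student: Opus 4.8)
The plan is to treat $X_1^2+\ldots+X_N^2-a^2$ as a monic polynomial in the single variable $X_N$ over the ring $R=\mbc[X_1,\ldots,X_{N-1}]$ and run the division algorithm. Since
$$g:=X_N^2+\bigl(X_1^2+\ldots+X_{N-1}^2-a^2\bigr)$$
is monic of degree $2$ in $X_N$, for any $p\in\mbc[X_1,\ldots,X_N]$ there exist $q\in\mbc[X_1,\ldots,X_N]$ and a remainder of the form
$$r=A(X_1,\ldots,X_{N-1})+B(X_1,\ldots,X_{N-1})\,X_N,$$
with $A,B\in R$, such that $p=q\,g+r$. The whole problem then reduces to showing that the hypothesis forces $r\equiv 0$, since $g=\norm{X}^2-a^2$.

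First I would observe that $g$ vanishes on $S^{N-1}(a)$, so $r=p-q\,g$ also vanishes at every point of the sphere. Fix any point $u=(x_1,\ldots,x_{N-1})$ in the open ball of radius $a$ in $\mbr^{N-1}$ and set $t=\sqrt{a^2-(x_1^2+\ldots+x_{N-1}^2)}>0$. Then both $(u,t)$ and $(u,-t)$ lie on $S^{N-1}(a)$, so evaluating $r$ at these two points gives $A(u)+B(u)t=0$ and $A(u)-B(u)t=0$. Adding and subtracting yields $A(u)=0$ and $B(u)\,t=0$; since $t>0$ strictly inside the ball, also $B(u)=0$. Thus $A$ and $B$ are polynomials in $N-1$ variables vanishing on an open subset of $\mbr^{N-1}$, hence identically zero — this is the elementary fact (proved by induction on the number of variables, using that a nonzero one-variable polynomial has finitely many roots) already invoked in the proof of Proposition \ref{P:harmsphere}. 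A minor point of care: $A,B$ have complex coefficients, but writing each as its real part plus $i$ times its imaginary part reduces the vanishing claim to the real-coefficient case. With $r\equiv 0$ we conclude $p=(X_1^2+\ldots+X_N^2-a^2)q$.

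The step I expect to require the most care is the two-point argument that pins down both $A$ and $B$: a single evaluation gives only one linear relation, and it is essential to use that each $u$ in the ball lifts to two distinct sphere points via $\pm t$, together with the nonvanishing of $t$ in the interior. An alternative route, reusing machinery already developed, is to apply the map $L_a$ of Proposition \ref{P:harmsphere}: since $L_ap$ is harmonic and agrees with $p$ on the sphere, the hypothesis makes $L_ap$ a harmonic polynomial vanishing on $S^{N-1}(a)$, whence $L_ap=0$; then the expansion (\ref{E:pexpand}) gives
$$p-L_ap=\sum_{k\geq 1}\bigl(\norm{X}^{2k}-a^{2k}\bigr)p_k,$$
and since each $\norm{X}^{2k}-a^{2k}$ is divisible by $\norm{X}^2-a^2$, we again obtain $p=(\norm{X}^2-a^2)q$. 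I would lead with the division-algorithm argument, as it is self-contained and does not depend on the analytic input about harmonic functions.
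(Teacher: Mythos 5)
Your proposal is correct, but your lead argument is genuinely different from the paper's proof. The paper proves this proposition exactly by your \emph{alternative} route: it applies $L_a$ from Proposition \ref{P:harmsphere}, notes that $L_ap$ is a harmonic polynomial vanishing on $S^{N-1}(a)$ and hence zero, and then reads off divisibility from $p=p-L_ap=\sum_{k\geq 1}(\norm{X}^{2k}-a^{2k})p_k$. Your primary argument — dividing $p$ by the monic (in $X_N$) polynomial $g=X_N^2+(X_1^2+\ldots+X_{N-1}^2-a^2)$, then killing the remainder $A+BX_N$ by evaluating at the two lifts $(u,\pm t)$ of each point $u$ in the open ball — is instead purely algebraic, and is in the spirit of the ``entirely algebraic proof'' that the paper attributes to Keith Conrad without reproducing it. The trade-off is clear: the paper's route reuses machinery it has already built ($L_a$ and the base-$\norm{X}^2$ expansion), but it inherits the analytic input that a harmonic function vanishing on a sphere vanishes inside the ball; your division-algorithm argument is self-contained, needs only the density fact that a polynomial vanishing on an open subset of $\mbr^{N-1}$ is zero (plus the real/imaginary part reduction you correctly flag), and the two-point evaluation step that pins down both $A$ and $B$ is exactly the right use of the $\pm t$ symmetry of the sphere. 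Both proofs are sound; yours is the more elementary and portable one, the paper's is the one that integrates with its later use of $\mcz_N(a)$ and $\ker L_a$.
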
 
  \begin{proof} By definition, $L_ap$ is the harmonic polynomial that agrees with $p$ pointwise on $S^{N-1}(a)$; thus if $p$ evaluates to $0$ on this sphere then so does $L_ap$.  Consequently (by the argument in the proof of  Proposition \ref{P:harmsphere}), $L_ap=0$. Then
  \begin{equation}
  p=p-L_ap =(\norm{X}^2-a^2)p_1  +(\norm{X}^4-a^4)p_2+\ldots (\norm{X}^{2r}-a^{2r})p_r,
  \end{equation} 
  which is a polynomial multiple of $\norm{X}^2-a^2$.  \end{proof}
  
  Keith Conrad has shared with us an entirely algebraic proof of this result.
  
  We denote by ${\mathcal Z}_N(a)$ the ideal in $\mbc[X_1,\ldots, X_N]$ consisting of all polynomial multiples of $(X_1^2+\ldots +X_N^2-a^2)$. Moreover, let
  \begin{equation}
  \mcz^{\leq d}_N(a)=\mcz_N(a)\cap \mcp^{\leq d}_N,
  \end{equation}
  the subspace of $\mcz_N(a)$ consisting of all elements of degree $d$. Here we  continue to work with a fixed radius $a>0$.
  
  \begin{prop}\label{P:reprsnt}
  Suppose $p\in \mbc[X_1,\ldots, X_N]$ and $a>0$. Then either $p+\mcz_N(a)$ contains no nonzero polynomial in   $X_1,\ldots, X_k$, for $k<N$,  or there is a unique  polynomial $p_{\rm min}$ in   variables $X_1, \ldots, X_k$ with $k<N$, such that $p_{\rm min}\in p+\mcz_N(a)$.  \end{prop}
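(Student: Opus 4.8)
The plan is to reduce the entire statement to a single structural fact: the ideal $\mcz_N(a)$ meets the subalgebra of polynomials in fewer than $N$ variables only at $0$. Concretely, I would first establish the key claim
\[
\mcz_N(a)\cap\mcp_k=\{0\}\qquad\text{for every }k<N.
\]
This is immediate from Lemma \ref{L:zerof}: any $r\in\mcz_N(a)$ is by definition a multiple of $X_1^2+\ldots+X_N^2-a^2$ and hence vanishes at every point of $S^{N-1}(a)$; if moreover $r\in\mcp_k$ with $k<N$, then Lemma \ref{L:zerof} forces $r=0$. (Equivalently, one may first invoke Proposition \ref{P:polysphere} to identify $\mcz_N(a)$ with the full vanishing ideal of $S^{N-1}(a)$, and then apply Lemma \ref{L:zerof}.) Taking $k=N-1$ gives in particular $\mcz_N(a)\cap\mcp_{N-1}=\{0\}$.

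With this claim in hand, uniqueness is automatic. Since $\mcp_k\subseteq\mcp_{N-1}$ for every $k<N$, any polynomial in the coset $p+\mcz_N(a)$ that uses only the variables $X_1,\ldots,X_k$ for some $k<N$ lies in $(p+\mcz_N(a))\cap\mcp_{N-1}$. If $p_1,p_2$ are two such polynomials, then $p_1-p_2\in\mcz_N(a)\cap\mcp_{N-1}=\{0\}$, so $p_1=p_2$. Thus the coset contains \emph{at most} one element of $\mcp_{N-1}$, and whenever it contains a nonzero such element that element is the desired $p_{\rm min}$. Note that the smallest $k$ with $p_{\rm min}\in\mcp_k$ is then simply read off from the variables that actually occur in $p_{\rm min}$, so no separate minimality argument is needed.

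It remains to assemble the dichotomy. Either $(p+\mcz_N(a))\cap\mcp_{N-1}$ contains no nonzero polynomial, which is the first alternative, or it contains a nonzero polynomial, which by the previous paragraph is unique and is the claimed $p_{\rm min}$. To see that these two cases are genuinely exhaustive and mutually exclusive, I would record the degenerate case: if $0\in p+\mcz_N(a)$, then $p\in\mcz_N(a)$, whence the only element of the coset lying in $\mcp_{N-1}$ is $0$ itself; this falls squarely into the first alternative and precludes any nonzero representative.

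I expect no serious obstacle here, since the substantive input has already been isolated in Lemma \ref{L:zerof} (together with the identification of $\mcz_N(a)$ in Proposition \ref{P:polysphere}). The only point requiring genuine care is the bookkeeping around the word \emph{nonzero} and the degenerate case $p\in\mcz_N(a)$: one must verify that the ``unique $p_{\rm min}$'' branch never silently admits the zero polynomial alongside a nonzero representative. That verification is exactly the observation in the previous paragraph that $0\in p+\mcz_N(a)$ forces $p\in\mcz_N(a)$, which then rules out any nonzero element of the coset in $\mcp_{N-1}$.
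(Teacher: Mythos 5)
Your proof is correct, and it is structurally parallel to the paper's own: both reduce uniqueness to the fact that $\mcz_N(a)\cap\mcp_{N-1}=\{0\}$, applied to the difference of two representatives of the coset. The genuine difference lies in how that key fact is established. You derive it from Lemma \ref{L:zerof}: an element of $\mcz_N(a)$ vanishes as a function on $S^{N-1}(a)$, and a polynomial in fewer than $N$ variables vanishing on that sphere must be the zero polynomial — an evaluation argument through real points and the vanishing of polynomials on open sets. The paper instead disposes of the step with a one-line algebraic degree count: for $q\neq 0$, the product $(\norm{X}^2-a^2)q=X_1^2q+\cdots+X_N^2q-a^2q$ has degree at least $2$ in each variable $X_j$, so no nonzero multiple of $\norm{X}^2-a^2$ can be a polynomial in fewer than $N$ variables. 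Your route is shorter given the machinery already in place and makes the clean structural statement $\mcz_N(a)\cap\mcp_k=\{0\}$ explicit; the paper's route is purely algebraic (in keeping with its stated emphasis on an algebraic framework), needs no appeal to real evaluation or sphere geometry, and proves something slightly sharper, namely that a nonzero multiple of $\norm{X}^2-a^2$ involves every variable to degree at least $2$. Your careful handling of the degenerate case $p\in\mcz_N(a)$ and of the word ``nonzero'' is sound — the paper leaves exactly that bookkeeping implicit.
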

If $p+\mcz_N(a)$ contains no nonzero polynomial in   $X_1,\ldots, X_k$, for $k<N$ then we take $p_{\rm min}$ to be just $p$ itself.
  \begin{proof} Suppose $p_1$ and $p_2$ are both polynomials in  $X_1,\ldots, X_j$, with $j<N$, for which $p_1, p_2\in p+\mcz_N(a)$. Then, by Proposition \ref{P:polysphere},  $p_1-p_2$ would be divisible by $\norm{X}^2-a^2$ in $\mbc[X_1, \ldots, X_N]$. But no nonzero polynomial multiple of $\norm{X}^2-a^2$ is a polynomial in fewer variables than $X_1,\ldots, X_N$, because for any polynomial $q$ we have
  $$(\norm{X}^2-a^2)q=X_1^2q+\ldots +X_N^2q-a^2q,$$
  which is of degree at least $2$ in each $X_j$ unless $q$ is $0$.
  Hence $p_1=p_2$.
  \end{proof}

  \subsection{The rotation generators $M_{jk}$  and their action on  $\mcp^{\leq d}_N/\mcz^{\leq d}_N(a)$. }  For    $j, k\in\{1,\ldots, N\}$, let
  \begin{equation}
  M_{jk}=X_j\partial_k-X_k\partial_j.
  \end{equation}
  Then 
  $$M_{kj}=-M_{jk}.$$
  The following result produces an action of $M_{jk}$ on each space $\mcp^{\leq d}_N/\mcz^{\leq d}_N(a)$:
  \begin{prop}\label{P:Mjkquot}
  The operator $M_{jk}$ maps each space $\mcp^d_N$ into itself and it also maps $\mcz^{\leq d}_N(a)$ into itself.
  \end{prop}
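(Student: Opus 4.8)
The plan is to separate the two assertions and, for the second, to reduce everything to the single observation that $M_{jk}$ annihilates the generator $\norm{X}^2 - a^2$ of the ideal $\mcz_N(a)$.

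First I would dispose of the homogeneity claim by pure degree bookkeeping. Writing $M_{jk} = X_j\partial_k - X_k\partial_j$, the operator $\partial_k$ lowers homogeneous degree by one and multiplication by $X_j$ raises it by one, so $X_j\partial_k$ maps $\mcp^d_N$ into $\mcp^d_N$; the same holds for $X_k\partial_j$, and hence for their difference $M_{jk}$. The identical count shows that $M_{jk}$ carries $\mcp^{\leq d}_N$ into $\mcp^{\leq d}_N$, a fact I will reuse for the degree part of the second claim.

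The heart of the second claim is that $M_{jk}$ is a derivation --- being a first-order operator with no zeroth-order term, it obeys the Leibniz rule $M_{jk}(fg) = (M_{jk}f)g + f(M_{jk}g)$ --- together with the computation
\[
M_{jk}(\norm{X}^2 - a^2) = (X_j\partial_k - X_k\partial_j)(X_1^2 + \ldots + X_N^2) = 2X_jX_k - 2X_kX_j = 0,
\]
where $M_{jk}$ kills the constant $a^2$. This vanishing is exactly the infinitesimal form of the rotation-invariance of $\norm{X}^2$. Consequently, for any $p = (\norm{X}^2 - a^2)q \in \mcz_N(a)$ the Leibniz rule gives
\[
M_{jk}p = \bigl(M_{jk}(\norm{X}^2 - a^2)\bigr)q + (\norm{X}^2 - a^2)M_{jk}q = (\norm{X}^2 - a^2)M_{jk}q,
\]
again a multiple of $\norm{X}^2 - a^2$, hence an element of $\mcz_N(a)$. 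Intersecting with the degree bound from the previous paragraph, $M_{jk}$ maps $\mcz^{\leq d}_N(a) = \mcz_N(a)\cap\mcp^{\leq d}_N$ into itself.

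I do not expect a genuine obstacle: the whole content lies in the identity $M_{jk}(\norm{X}^2) = 0$, which says that the vector field $X_j\partial_k - X_k\partial_j$ is tangent to every origin-centered sphere. The only point deserving a moment's attention is to confirm that $M_{jk}$ does not raise total degree, so that membership in $\mcz^{\leq d}_N(a)$ --- and not merely in the ambient ideal $\mcz_N(a)$ --- is preserved.
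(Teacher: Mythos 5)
Your proof is correct and follows essentially the same route as the paper's: degree preservation for the first claim, and the Leibniz rule combined with the vanishing $M_{jk}(\norm{X}^2-a^2)=0$ for the second. You are in fact slightly more careful than the paper in making explicit both the computation $2X_jX_k-2X_kX_j=0$ and the final intersection with the degree bound needed to land in $\mcz^{\leq d}_N(a)$ rather than merely in $\mcz_N(a)$.
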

  \begin{proof} Clearly $M_{jk}$ preserves degrees and so maps $\mcp^d_N$ into itself.  Now consider $p\in\mcz_N(a)$. Then
  $$p=(X_1^2+\ldots +X_N^2-a^2)q$$
  for some $q\in\mcp^{d-2}_N$.  Then, using the Leibniz formula for first-order differential operators, we have
  \begin{equation}
  M_{jk}p =\left[M_{jk}(X_1^2+\ldots +X_N^2-a^2)\right]q+(X_1^2+\ldots +X_N^2-a^2)M_{jk}q.
  \end{equation}
  The first term on the right hand side is readily checked to be $0$. Hence $M_{jk}p$ is   in $\mcz_N(a)$.
   \end{proof}
   Thus we have an induced linear map
   \begin{equation}
   M_{jk}: \mcp^{\leq d}_N/\mcz^{\leq d}_N(a) \to  \mcp^{\leq d}_N/\mcz^{\leq d}_N(a).
   \end{equation}
   
   \subsection{Reduction to harmonic polynomials}\label{ss:redharm} We have seen that the restriction of any polynomial $p$ to $S^{N-1}(a)$ coincides with the restriction of a (unique) harmonic polynomial $L_ap$  to the sphere. In terms of the expansion (\ref{E:pexpand}) the mapping $L_a$ is given by:
   \begin{equation}\label{E:L}
  L_a: \mcp^{\leq d}_N\to \mcp^{\leq d}_N : p\mapsto p_0+a^2p_1+\ldots +a^{2s}p_s,
   \end{equation}
   where each $p_j$ is harmonic of degree $d-2j$. 
   
   \begin{prop}\label{P:propLa}
   The image of $L_a$ is the space of harmonic polynomials of degree $\leq d$:
   \begin{equation}
   L_a(\mcp_N^{\leq d})=\mch_N^{\leq d}.
   \end{equation}
   The operator $L_a$ is a projection:
   \begin{equation}
   L_a^2=L_a
   \end{equation}
   and its kernel is
   \begin{equation}\label{E:kerLa}
   \ker L_a=\mcz_N^{\leq d}.
   \end{equation}
    \end{prop}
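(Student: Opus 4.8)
The plan is to read all three assertions off the explicit formula (\ref{E:L}) for $L_a$ together with the uniqueness of the harmonic expansion (\ref{E:pexpand}), which I may assume from Proposition \ref{P:harmdiv}. So I would fix $p\in\mcp_N^{\leq d}$ with expansion $p=p_0+\norm{X}^2p_1+\ldots+\norm{X}^{2s}p_s$, where each $p_j\in\mch_N^{\leq d-2j}$ is harmonic, and record that by definition $L_ap=p_0+a^2p_1+\ldots+a^{2s}p_s$.

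First I would dispatch the image and the idempotency together. Since each $p_j$ is harmonic and since harmonicity and the degree bound $\leq d$ are preserved under scaling and addition, every $L_ap$ lies in $\mch_N^{\leq d}$; hence $L_a(\mcp_N^{\leq d})\subseteq\mch_N^{\leq d}$. The key observation for the reverse inclusion is that $L_a$ fixes harmonic polynomials: if $h\in\mch_N^{\leq d}$, then by the uniqueness in Proposition \ref{P:harmdiv} its expansion (\ref{E:pexpand}) has $p_0=h$ and $p_1=\cdots=p_s=0$, so $L_ah=h$. This yields $\mch_N^{\leq d}\subseteq L_a(\mcp_N^{\leq d})$, hence the claimed equality, and also $L_a^2=L_a$, since $L_ap$ is itself harmonic and is therefore fixed by a second application of $L_a$.

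The substantive step is the kernel. For the inclusion $\ker L_a\subseteq\mcz_N^{\leq d}(a)$ I would use the factorization $\norm{X}^{2j}-a^{2j}=(\norm{X}^2-a^2)(\norm{X}^{2(j-1)}+a^2\norm{X}^{2(j-2)}+\cdots+a^{2(j-1)})$ to write
\[
p-L_ap=\sum_{j=1}^s(\norm{X}^{2j}-a^{2j})p_j\in\mcz_N^{\leq d}(a),
\]
so that $L_ap=0$ forces $p=p-L_ap\in\mcz_N^{\leq d}(a)$. For the reverse inclusion I would invoke Proposition \ref{P:harmsphere}: any $p\in\mcz_N(a)$ is a multiple of $\norm{X}^2-a^2$ and hence vanishes pointwise on $S^{N-1}(a)$, so $L_ap$ is a harmonic polynomial agreeing with $p$, and thus vanishing, on the sphere; by the uniqueness argument already established in Proposition \ref{P:harmsphere} (a harmonic polynomial vanishing on the sphere is the zero polynomial), $L_ap=0$. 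The two inclusions give $\ker L_a=\mcz_N^{\leq d}(a)$.

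I expect no serious obstacle, as everything reduces to the explicit formula and to results already in hand. The one point that demands care is the kernel equality: I must check that $p-L_ap$ respects the degree bound so that it genuinely lands in $\mcz_N^{\leq d}(a)$ rather than merely in $\mcz_N(a)$ (it does, since both $p$ and $L_ap$ lie in $\mcp_N^{\leq d}$), and I must appeal to the harmonic-uniqueness of Proposition \ref{P:harmsphere} rather than re-derive it. Once $\ker L_a=\mcz_N^{\leq d}(a)$ is secured, the image and idempotency statements follow at once from the fact that $L_a$ acts as the identity on $\mch_N^{\leq d}$.
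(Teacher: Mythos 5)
Your proof is correct and follows essentially the same route as the paper's: the image and idempotency both come from the observation that $L_a$ fixes harmonic polynomials, and the kernel is computed via the identity $p-L_ap=\sum_{j}(\norm{X}^{2j}-a^{2j})p_j$ together with the fact that a harmonic polynomial vanishing on $S^{N-1}(a)$ is the zero polynomial. The only cosmetic difference is that you justify $L_ah=h$ for harmonic $h$ by the algebraic uniqueness of the expansion (Proposition \ref{P:harmdiv}), whereas the paper invokes the characterization of $L_ap$ as the unique harmonic polynomial agreeing with $p$ on the sphere (Proposition \ref{P:harmsphere}); both are available at this point and lead to the same conclusion.
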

    We note that then $L_a$ induces an isomorphism
     \begin{equation}
   \mcp^{\leq d}_N/\mcz^{\leq d}_N(a)\simeq \mch^{\leq d}_N.
   \end{equation}
   In Proposition \ref{P:Laselfadj}  we will show that $L_a$ is self-adjoint with respect to a pairing $\la\cdot,\cdot\ra_{a,N}$. Thus, it is the orthogonal projection in $\mcp^{\leq d}_N$ onto the subspace of harmonic polynomials.

\begin{proof} By definition,
    $L_ap$ is the harmonic polynomial that agrees with $p$ on $S^{N-1}(a)$. In particular, $L_ap=p$ if $p\in\mch^{\leq d}_N$, and so the image of $L_a$ is $\mch^{\leq d}_N$.    Moreover, 
    $$L_a^2p=L_a(L_ap)=L_ap,$$
    since $L_ap=p$ if $p$ is already harmonic.   
    
    Observing that
   \begin{equation}
   p=L_ap+(\norm{X}^2-a^2)p_1+\ldots +(\norm{X}^{2s}-a^{2r})p_s,
   \end{equation}
   we see that $\ker L_a\subset \mcz^{\leq d}_N(a)$. Moreover, if $p\in\mcz^{\leq d}_N(a)$ then, since $L_ap$ agrees with $p$ on $S^{N-1}(a)$, we have $L_ap=0$ on $S^{N-1}(a)$ and hence, since $L_ap$ is harmonic, $L_ap$ is the zero polynomial, and so $p\in\ker L_a$. This proves that $\ker L_a$ is $\mcz^{\leq d}_N$.
   \end{proof}

   \subsection{Rotation generators  on harmonic polynomials}\label{ss:mjkharm} We show now that the operators $M_{jk}$  commute with the projection $L_a$ and map harmonic polynomials to themselves. 
     
     \begin{prop}\label{P:mjkharm} With notation as above,  
     \begin{equation}\label{E:XnormMjk}
  \norm{X}^2M_{jk}=M_{jk}\norm{X}^2,
   \end{equation}
   where $\norm{X}^2$ is the operator that multiplies any polynomial by $\norm{X}^2$, and
   
\begin{equation}\label{E:MjkDeltaN}
   \Delta_N M_{jk}=M_{jk}\Delta_N.
   \end{equation}
     The operator $M_{jk}$ maps $\mch^d_N$ into itself.  Moreover,
    \begin{equation}\label{E:LMjk}
   L_aM_{jk}=M_{jk}L_a.
   \end{equation}
      \end{prop}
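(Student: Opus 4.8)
The plan is to establish the four identities in the order they are stated, exploiting the fact that each is an identity between linear operators on the polynomial algebra and can therefore be verified by direct algebraic computation. First I would prove \eqref{E:XnormMjk}, that $M_{jk}$ commutes with multiplication by $\norm{X}^2$. Since $M_{jk}=X_j\partial_k-X_k\partial_j$ is a first-order differential operator, the Leibniz rule gives $M_{jk}(\norm{X}^2 p)=\left[M_{jk}\norm{X}^2\right]p+\norm{X}^2 M_{jk}p$, and the commutator reduces to checking that $M_{jk}$ annihilates $\norm{X}^2=X_1^2+\cdots+X_N^2$. This is exactly the vanishing computation already used in the proof of Proposition \ref{P:Mjkquot}: $M_{jk}\norm{X}^2 = X_j(2X_k)-X_k(2X_j)=0$. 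So \eqref{E:XnormMjk} follows immediately.

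Next I would prove \eqref{E:MjkDeltaN}, that $M_{jk}$ commutes with the Laplacian $\Delta_N$. The cleanest route is a direct commutator computation: expand $[\,M_{jk},\Delta_N\,]=[\,X_j\partial_k-X_k\partial_j,\ \sum_{i}\partial_i^2\,]$ using $[\partial_i^2, X_j]=2\delta_{ij}\partial_i$ and the fact that the $\partial_i$ commute among themselves. The only surviving terms are $[\,X_j\partial_k,\partial_j^2\,]=-2\partial_j\partial_k$ and $-[\,X_k\partial_j,\partial_k^2\,]=+2\partial_k\partial_j$, which cancel. Thus $\Delta_N M_{jk}=M_{jk}\Delta_N$. (Alternatively, one can cite the standard fact that the $M_{jk}$ generate rotations and $\Delta_N$ is rotation-invariant, but the commutator bracket is self-contained and brief.) From \eqref{E:MjkDeltaN} the statement that $M_{jk}$ maps $\mch^d_N$ into itself is immediate: if $p\in\mch^d_N$ then $\Delta_N p=0$, so $\Delta_N(M_{jk}p)=M_{jk}(\Delta_N p)=0$, and since Proposition \ref{P:Mjkquot} already records that $M_{jk}$ preserves degree $d$, we get $M_{jk}p\in\mch^d_N$.

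Finally, for \eqref{E:LMjk} I would use the explicit expansion \eqref{E:pexpand}, $p=p_0+\norm{X}^2 p_1+\cdots+\norm{X}^{2s}p_s$, where each $p_i\in\mch^{\leq d-2i}_N$, and the formula \eqref{E:L} defining $L_a$ on this expansion. Applying $M_{jk}$ to this expansion and using \eqref{E:XnormMjk} repeatedly to move $M_{jk}$ past each power of $\norm{X}^2$, together with the fact just proved that $M_{jk}$ preserves harmonicity and degree, one obtains $M_{jk}p = M_{jk}p_0 + \norm{X}^2 M_{jk}p_1 + \cdots + \norm{X}^{2s}M_{jk}p_s$, which is \emph{itself} the harmonic-decomposition expansion of $M_{jk}p$ since each $M_{jk}p_i$ is harmonic of the correct degree. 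By uniqueness of this decomposition (Proposition \ref{P:harmdiv}), applying $L_a$ replaces each $\norm{X}^{2i}$ by $a^{2i}$, yielding $L_a M_{jk}p = M_{jk}p_0 + a^2 M_{jk}p_1 + \cdots + a^{2s}M_{jk}p_s = M_{jk}L_a p$.

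The main obstacle is the last step: the argument hinges on knowing that the expansion $M_{jk}p=\sum_i \norm{X}^{2i} M_{jk}p_i$ is genuinely the canonical harmonic expansion of $M_{jk}p$ in the sense of \eqref{E:pexpand}, so that $L_a$ acts on it by the substitution $\norm{X}^2\mapsto a^2$ term by term. This requires the two facts established just above — that each $M_{jk}p_i$ remains harmonic of degree $\deg p_i$ — together with the uniqueness guaranteed by Proposition \ref{P:harmdiv}; without uniqueness one could not conclude that $L_a$ commutes with $M_{jk}$ rather than merely agreeing on the sphere. Everything else is routine first-order Leibniz computation.
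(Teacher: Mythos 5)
Your proposal is correct and follows essentially the same route as the paper's proof: Leibniz plus $M_{jk}\norm{X}^2=0$ for \eqref{E:XnormMjk}, a direct computation for \eqref{E:MjkDeltaN} (which the paper leaves as ``straightforward algebraic computation''), harmonicity and degree preservation combined to show $M_{jk}$ maps $\mch^d_N$ into itself, and application of $M_{jk}$ term by term to the base-$\norm{X}^2$ expansion \eqref{E:pexpand} to obtain \eqref{E:LMjk}. Your explicit appeal to Proposition \ref{P:harmdiv} for uniqueness of that expansion simply makes precise a step the paper handles implicitly through the definition \eqref{E:L} of $L_a$.
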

   \begin{proof}  Since $M_{jk}$ is a first-order differential operator we have by the Leibniz product rule:
   \begin{equation}
   M_{jk}(\norm{X}^2p)=M_{jk}(\norm{X}^2)p+\norm{X}^2M_{jk}p=\norm{X}^2M_{jk}p,
   \end{equation}
   because 
   $$M_{jk}(X_j^2+X_k^2)=0.$$
   Thus (\ref{E:XnormMjk}) holds.

   Straightforward algebraic computation shows that (\ref{E:MjkDeltaN}) holds, and hence $M_{jk}p$ is harmonic whenever $p$ is harmonic.
   
   Applying $M_{jk}$ to the expansion 
    \begin{equation}\label{E:pexpand2}
  p=p_0+\norm{X}^2p_1+\ldots+n \norm{X}^{2r}p_{r}
  \end{equation}
and using the Leibniz product rule $M_{jk}(fg)=(M_{jk}f)g+f(M_{jk}g)$ and the fact that $M_{jk}\norm{X}^2=0$, we have
   \begin{equation}\label{E:Mjkp}
   M_{jk}p=M_{jk}p_0 +\norm{X}^2M_{jk}p_1+\ldots +\norm{X}^{2r}M_{jk}p_r.
   \end{equation}
   Here each of the terms $M_{jk}p_i$ is harmonic and of the same degree as $p_i$. Hence, (\ref{E:Mjkp}) is the expansion of $M_{jk}p$ in `base' $\norm{X}^2$, the counterpart of  (\ref{E:pexpand2}) for $M_{jk}p$. Then, by the  definition of $L_a$ in (\ref{E:L}) we have:
   \begin{equation}
   L_aM_{jk}p= M_{jk}p_0 +a^2M_{jk}p_1+\ldots + a^{2r}M_{jk}p_r.
   \end{equation}
   Consequently, 
   \begin{equation}\label{E:LaMjk}
   L_aM_{jk}p=M_{jk}L_ap.
   \end{equation}
   \end{proof}

   \subsection{The quadratic Casimir $\norm{\mathbf M}^2$}\label{ss:quadCas} We define the quadratic Casimir operator on polynomials by
    \begin{equation}\label{E:DSN}
   \norm{\mathbf M}^2= \sum_{\{j,k\}\in P_2(N) }M_{jk}^2,
    \end{equation}
    where $P_2(N)$ is the set of all $2$-element subsets of $\{1,\ldots, N\}$.     This operator  acts on the full polynomial algebra $\mcp$ in the sequence of variables $X_1, X_2, \ldots$, but we will often restrict it  to appropriate subspaces.
        
    \begin{prop}\label{P:spherLapl} $\norm{\mathbf M}^2$ maps each of  the spaces $\mcp^d_N$, $\mcz^{\leq d}_N(a)$, and  $\mch^{  d}_N$  into itself. Moreover,  
    \begin{equation}\label{E:DSLcom}
    \begin{split}
    M_{jk}\norm{{\bf M}}^2&= \norm{{\bf M}}^2M_{jk}\\
 \norm{\mathbf M}^2L_a &=L_a \norm{\mathbf M}^2.
    \end{split}
    \end{equation}
    for all $a>0$.   \end{prop}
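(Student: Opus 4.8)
The plan is to reduce everything except the centrality relation $M_{jk}\norm{\mathbf M}^2=\norm{\mathbf M}^2M_{jk}$ to the single-operator results already established. For the three ``maps into itself'' assertions, recall that by Proposition \ref{P:Mjkquot} each $M_{jk}$ preserves $\mcp^d_N$ and $\mcz^{\leq d}_N(a)$, and by Proposition \ref{P:mjkharm} each $M_{jk}$ preserves $\mch^d_N$; since $\norm{\mathbf M}^2$ is a finite sum of the compositions $M_{jk}^2=M_{jk}\circ M_{jk}$, it preserves each of these subspaces as well. Likewise the relation $\norm{\mathbf M}^2L_a=L_a\norm{\mathbf M}^2$ is immediate from (\ref{E:LMjk}): since $L_aM_{jk}=M_{jk}L_a$, we get $L_aM_{jk}^2=M_{jk}L_aM_{jk}=M_{jk}^2L_a$, and summing over $\{j,k\}\in P_2(N)$ gives the claim.

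The substance is therefore the centrality of the quadratic Casimir, i.e.\ $[M_{ab},\norm{\mathbf M}^2]=0$ for every $a,b$. First I would record the Lie-bracket relations among the generators. A direct computation from $M_{jk}=X_j\partial_k-X_k\partial_j$ and $[\partial_i,X_l]=\delta_{il}$ yields
\begin{equation*}
[M_{ab},M_{cd}]=\delta_{bc}M_{ad}+\delta_{ad}M_{bc}-\delta_{ac}M_{bd}-\delta_{bd}M_{ac},
\end{equation*}
the standard structure relations of $\mathfrak{so}(N)$.

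To exploit these I would write $\norm{\mathbf M}^2=\tfrac12\sum_{c,d}M_{cd}^2$, the sum now running over all ordered pairs (legitimate since $M_{cc}=0$ and $M_{dc}^2=M_{cd}^2$). Using $[M_{ab},M_{cd}^2]=[M_{ab},M_{cd}]M_{cd}+M_{cd}[M_{ab},M_{cd}]$ and substituting the bracket relation, $[M_{ab},\norm{\mathbf M}^2]$ becomes a sum of four families of terms indexed by the four Kronecker deltas. Carrying out each contracted sum and using the antisymmetry $M_{dc}=-M_{cd}$ to relabel indices, the first and second families cancel and so do the third and fourth, leaving $[M_{ab},\norm{\mathbf M}^2]=0$. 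The main obstacle is precisely this index bookkeeping: one must track the order of the noncommuting factors $M_{ad}$ and $M_{cd}$ and confirm that the cancellations respect it, which is why it is cleanest to keep the operators inside symmetric (anticommutator) combinations throughout.

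Finally, I would note a shorter alternative that avoids the bracket computation entirely, provided the identity (\ref{E:x2Dintro}) is available at this point: it lets us write $\norm{\mathbf M}^2=\norm{X}^2\Delta_N-(r\partial_r)^2-(N-2)r\partial_r$. Here $M_{ab}$ commutes with $\norm{X}^2$ by (\ref{E:XnormMjk}), with $\Delta_N$ by (\ref{E:MjkDeltaN}), and with $r\partial_r=\sum_jX_j\partial_j$ because $M_{ab}$ preserves each homogeneous component $\mcp^d_N$, on which $r\partial_r$ acts as the scalar $d$; commuting with each building block, $M_{ab}$ commutes with $\norm{\mathbf M}^2$.
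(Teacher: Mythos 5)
Your proposal is correct. The routine parts match the paper exactly: the paper also deduces the invariance of $\mcp^d_N$, $\mcz^{\leq d}_N(a)$, and $\mch^d_N$ from Proposition \ref{P:Mjkquot} and Proposition \ref{P:mjkharm}, and also gets $\norm{\mathbf M}^2L_a=L_a\norm{\mathbf M}^2$ by summing the relation (\ref{E:LMjk}). Where you genuinely diverge is on the centrality relation $M_{jk}\norm{\mathbf M}^2=\norm{\mathbf M}^2M_{jk}$: your primary argument is the standard Lie-algebraic one, writing $\norm{\mathbf M}^2=\tfrac12\sum_{c,d}M_{cd}^2$, invoking the $\mathfrak{so}(N)$ structure relations, and cancelling the four contracted families of anticommutator terms. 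That computation is right (the bracket formula you state is correct, and the cancellations go through using symmetry of the anticommutator and $M_{dc}=-M_{cd}$), and it has the virtue of being representation-independent: it is the usual proof that a quadratic Casimir is central, needing nothing about polynomials. The paper instead takes what you list as the ``shorter alternative'': it uses the identity (\ref{E:x2D}) to write $\norm{\mathbf M}^2=\norm{X}^2\Delta_N-(r\partial_r)^2-(N-2)r\partial_r$ and then quotes the already-proved commutation relations (\ref{E:XnormMjk}), (\ref{E:MjkDeltaN}), and Lemma \ref{L:MjkL} (your justification that $M_{ab}$ commutes with $r\partial_r$ via homogeneous components is exactly the proof of that lemma). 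So the paper's proof is the one-line argument you relegate to a remark, leveraging machinery it has already built, while your main argument trades that context-dependence for a self-contained but bookkeeping-heavier computation; both are valid.
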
 
\begin{proof}  By Proposition \ref{P:Mjkquot},  each rotation generator $M_{jk}$ maps each of the spaces $\mcp^d_N$ and $\mcz^{\leq d}_N(a)$ into itself.  Hence,  $\norm{{\bf M}}^2$ also maps each of these spaces into itself. The commutation of $L_a$ and $\norm{{\bf M}}^2$ follows from the commutation (\ref{E:LMjk}) relation between $L_a$ and all $M_{jk}$.  By Proposition \ref{P:mjkharm}, $M_{jk}$ maps  $\mch^d_N$ into itself. Hence  $\norm{{\bf M}}^2$ maps  $\mch^d_N$ into itself. 

The first commutation relation in (\ref{E:DSLcom}) follows by combining the identity (\ref{E:x2D}), expressing $\norm{{\bf M}}^2$ in terms of $\Delta_N$ and $r\partial_r$ (see (\ref{E:rdr})) with the commutation relation  (\ref{E:MjkDeltaN})  between $M_{jk}$ and $\Delta_N$,  the commutation relation (\ref{E:XnormMjk}) between $\norm{X}^2$ and $M_{jk}$, and the commutation relation between $M_{jk}$ and $r\partial_r$ given by Lemma \ref{L:MjkL}.  
\end{proof}

\subsection{ $\norm{{\bf M}}^2$ and the Euclidean Laplacian $\Delta_N$}\label{ss:lseuc} We turn to the relationship between $\norm{{\bf M}}^2$ and the ordinary Laplacian $\Delta_N$ on $\mcp_N$:
\begin{equation}
\Delta_N=\sum_{j=1}^N\partial_j^2.
\end{equation}
We use the notation
\begin{equation}\label{E:rdr}
r\partial_r\stackrel{\rm def}{=}\sum_{j=1}^NX_j\partial_j,
\end{equation}
as an operator on $\mcp_N$. This notation is consistent with
$$r^2=\norm{X}^2=X_1^2+\ldots+X_N^2.$$
Then
\begin{equation}\label{E:rdrsqrd}
(r\partial_r)^2= \sum_{j=1}^NX_j^2\partial_j^2  +\sum_{j=1}^NX_j\partial_j +2\sum_{\{j,k\}\in P_2(N)}X_jX_k\partial_j\partial_k,
\end{equation}
where   $P_2(N)$ be the set of all 2-element subsets   $\{j,k\}$ of $\{1,\ldots, N\}$.  
For $j\neq k$, we have
\begin{equation}
M_{jk}^2= X_j^2\partial_k^2 +X_k^2\partial_j^2-\left[X_j\partial_j+ X_k\partial_k\right]-2X_jX_k\partial_j\partial_k.      \end{equation}

Since each $j$ appears paired exactly once with each of the other  $(N-1)$ choices of $k$, we have
\begin{equation}\label{E:smjk}
\begin{split}
&\sum_{\{j,k\}\in P_2(N)}M_{jk}^2 \\
&=  \sum_jX_j^2\left(\sum_k\partial_k^2 -\partial_j^2\right) -(N-1)\sum_{j=1}^N X_j\partial_j\\
&\qquad  - 2\sum_{\{j,k\}\in P_2(N)} X_j X_k\partial_j\partial_k\\
&= \norm{X}^2\Delta_{N} - \sum_{j=1}^N\left[X_j^2\partial_j^2+(N-1)X_j\partial_j\right]
 \\
&\qquad  - 2\sum_{\{j,k\}\in P_2(N)} X_jX_k\partial_j\partial_k.\end{split}
\end{equation}
Thus
\begin{equation}\label{E:x2D}
\begin{split}
\norm{X}^2\Delta_{N} &= \sum_{j=1}^N X_j^2\partial_j^2     +2\sum_{\{j,k\}\in P_2(N)}X_j X_k\partial_j\partial_k \\
&\quad +(N-1)\sum_j X_j\partial_j +\sum_{\{j,k\}\in P_2(N)}M_{jk}^2 \\
&=(r\partial_r)^2+(N-2)r\partial_r +\norm{{\bf M}}^2.
\end{split}
\end{equation}
where we used (\ref{E:rdrsqrd}) in the last step. This relation holds on the full polynomial space $\mcp_N$.

 \subsection{Eigenvalues of $r\partial_r$}\label{ss:rprreigen} If $q\in\mcp^m_N$, that is a polynomial in $X_1,\ldots, X_N$ that is homogeneous of degree $m$, then we have the Euler relation
\begin{equation}\label{E:euler}
(r\partial_r)q=mq,
\end{equation}
 as can be readily verified by taking $q$ to be any monomial. Since eigenvectors belonging to distinct eigenvalues are linearly independent,  and every polynomial is the sum of its homogeneous parts, the eigenvalues of $r\partial_r$ are all the non-negative integers. By the same argument, the eigenvalues of the operator $(r\partial_r)^2+(N-2)r\partial_r$ are $m(m+N-2)$, with $m$ running over all non-negative integers. Since these numbers are distinct for distinct values of $m$, such eigenvectors are linearly independent. Thus in the direct sum decomposition
 \begin{equation}\label{E:eigprpr2}
 \mcp_N=\mcp_N^0\oplus\mcp^1_N\oplus\ldots
 \end{equation}
 each subspace $\mcp^m_N$ is an eigenspace of  $(r\partial_r)^2+(N-2)r\partial_r$, with eigenvalue $m(m+N-2)$.

\begin{lemma}\label{L:MjkL}
The operators $M_{jk}$ and $r\partial_r$ commute for all $\{j,k\}\in \mcp_2(N)$.
\end{lemma}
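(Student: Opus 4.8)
The plan is to exploit the fact, just established in (\ref{E:euler}) and the surrounding discussion, that $r\partial_r$ acts diagonally with respect to the degree decomposition $\mcp_N=\bigoplus_{m\geq 0}\mcp^m_N$: on each homogeneous piece $\mcp^m_N$ it is simply multiplication by the scalar $m$. Since every polynomial is uniquely a finite sum of its homogeneous components, and both $r\partial_r$ and $M_{jk}$ are linear, it suffices to verify $M_{jk}(r\partial_r)p=(r\partial_r)M_{jk}p$ for $p\in\mcp^m_N$ with $m$ arbitrary.

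First I would recall from Proposition \ref{P:Mjkquot} that $M_{jk}$ preserves degree, that is, it maps $\mcp^m_N$ into itself; this is immediate since in $M_{jk}=X_j\partial_k-X_k\partial_j$ the factors $X_j,X_k$ raise the degree by one while $\partial_k,\partial_j$ lower it by one. Then for $p\in\mcp^m_N$ the left-hand side is $M_{jk}(mp)=mM_{jk}p$ by Euler's relation (\ref{E:euler}), while on the right-hand side $M_{jk}p$ is again homogeneous of degree $m$, so $(r\partial_r)M_{jk}p=mM_{jk}p$ by the same relation. The two sides agree, and summing over the direct-sum decomposition gives commutativity on all of $\mcp_N$, hence on each quotient $\mcp^{\leq d}_N/\mcz^{\leq d}_N(a)$ used later.

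Alternatively, and without invoking homogeneity, one can compute the commutator directly from the canonical relation $[\partial_i,X_l]=\delta_{il}$. A short calculation gives $[\sum_i X_i\partial_i,\,X_a\partial_b]=0$ for every pair $(a,b)$: the second-order terms $X_iX_a\partial_i\partial_b$ cancel against $X_aX_i\partial_b\partial_i$, while the two first-order remainders each sum over $i$ to $X_a\partial_b$ and so cancel. Applying this with $(a,b)=(j,k)$ and with $(a,b)=(k,j)$ and subtracting yields $[r\partial_r,M_{jk}]=0$.

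I do not anticipate any real obstacle. The only point requiring care in the first route is that degree-preservation of $M_{jk}$ be in hand before the eigenvalue property of $r\partial_r$ is invoked, which is exactly what Proposition \ref{P:Mjkquot} supplies; the direct-computation route needs only bookkeeping of Kronecker deltas and is likewise routine.
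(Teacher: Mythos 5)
Your first route is essentially the paper's own proof: the paper likewise combines the Euler relation (\ref{E:euler}), which makes $r\partial_r$ act as the scalar $d$ on each homogeneous piece $\mcp^d_N$, with the fact that $M_{jk}$ maps $\mcp^d_N$ into itself, and concludes commutativity on each piece and hence on all of $\mcp_N$ by linearity. Your second, direct commutator computation via $[\partial_i, X_l]=\delta_{il}$ is a correct alternative but is not needed; the primary argument is complete and matches the paper.
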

\begin{proof} The Euler relation (\ref{E:euler}) shows that $r\partial_r$ is constant on the space $\mcp^d_N$ of polynomials homogeneous of degree $d$, and the operator $M_{jk}$ maps $\mcp^d_N$ into itself. Hence these two operators commute on each space $\mcp^d_N$, and hence on all of $\mcp_N$.
\end{proof}

 \subsection{Eigenvectors for $\norm{\mathbf M}^2$}\label{ss:sphlapeigen}  Suppose $p$ is a  harmonic polynomial, homogeneous of degree $m$, in $N$ variables.  Then, using (\ref{E:x2D}), we have  
\begin{equation}\label{E:eigenD}
\begin{split}
   \norm{{\bf M}}^2p  & = r\Delta_{N}p - (r\partial_r)^2p  -(N-2)(r\partial_r)p \\
&=0 -m^2p    -(N-2) mp\\
&= -m(m+N-2)p.
\end{split}
\end{equation}
  Conversely, if $p$ is a polynomial, homogeneous of degree $m$ in $N$ variables, and if
\begin{equation}\label{E:eigenD2}
  \norm{{\bf M}}^2p = -m(m+N-2)p 
\end{equation}
then $p$ is harmonic:
$$\Delta_{ N}p =0.$$
Thus the operator $\norm{{\bf M}}^2$, when restricted to $\mch_N$, has eigenvalue spectrum given by the distinct numbers $ -m(m+N-2)$, as $m$ runs over all non-negative integers, and the corresponding decomposition of $\mch_N$ into eigenspaces is
\begin{equation}
\mch_N=\mch^0_N\oplus\mch^1_N\oplus\mch^2_N\oplus\ldots .
\end{equation}

\subsection{Polynomials with rotational symmetry}\label{ss:rotsym} 

We will use the notation
\begin{equation}
AX=\left(\sum_{j=1}^Na_{1j}X_j, \ldots, \sum_{j=1}^Na_{Nj}X_j\right).
\end{equation}
for $A=[a_{ij}]$ any $N\times N$ matrix with entries $a_{ij}\in\mbc$.

Next we determine the nature of polynomials invariant under all rotations and then those that are invariant under rotations that fix a particular direction.

\begin{prop}\label{P:rotinvpoly} Suppose $p(X_1,\ldots, X_N)\in\mbc[X_1,\ldots, X_N]$ is invariant under all rotations, in the sense that
\begin{equation}\label{E:pinvso}
\begin{split}
p(X_1,\ldots, X_N)&=p(AX)\\
&\hbox{for all $A=[a_{ij}] \in SO(N)$.}
\end{split}
\end{equation}
If $N>1$ then there is a polynomial $q(X)\in\mbc[X]$ such that
\begin{equation}\label{E:pq21}
p(X_1,\ldots, X_N)=q(X_1^2+\ldots+X_N^2).
\end{equation}

\end{prop}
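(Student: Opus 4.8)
The plan is to exploit the transitivity of the action of $SO(N)$ on each origin-centered sphere, which holds precisely because $N>1$, to conclude that $p$ is constant on spheres, and then to upgrade this function-level statement to an algebraic identity of polynomials.

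First I would restrict $p$ to the first coordinate axis, setting $r(X_1)=p(X_1,0,\ldots,0)\in\mbc[X_1]$. Because $N\geq 2$, the rotation by angle $\pi$ in the $X_1X_2$-plane (which has determinant $1$, hence lies in $SO(N)$) sends $(X_1,0,\ldots,0)$ to $(-X_1,0,\ldots,0)$, so applying the invariance (\ref{E:pinvso}) to this rotation yields $r(-X_1)=r(X_1)$. Thus $r$ is an even one-variable polynomial and can be written as $r(X_1)=q(X_1^2)$ for a unique $q\in\mbc[X]$. I would then show that $p$ and $q(\norm{X}^2)$ agree as functions on all of $\mbr^N$: given any $x\in\mbr^N$ with $a=\norm{x}$, transitivity of the $SO(N)$-action on $S^{N-1}(a)$ provides some $A\in SO(N)$ with $Ax=(a,0,\ldots,0)$, and then (\ref{E:pinvso}) gives
\[
p(x)=p(Ax)=r(a)=q(a^2)=q(\norm{x}^2),
\]
with the case $x=0$ being immediate. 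Hence $p(X_1,\ldots,X_N)-q(X_1^2+\ldots+X_N^2)$ vanishes at every point of $\mbr^N$, and a polynomial that vanishes on an open subset of $\mbr^N$ is the zero polynomial (the same fact invoked in the proof of Proposition \ref{P:harmsphere}); this gives (\ref{E:pq21}).

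The only substantive ingredient is the transitivity of $SO(N)$ on spheres, and this is exactly where the hypothesis $N>1$ is used — both to produce the orientation-preserving map making $r$ even and to reduce an arbitrary point to the axis. Everything else is routine: the passage from an even one-variable polynomial to a polynomial in its square, and the vanishing-on-an-open-set principle. An alternative in the spirit of the operators $M_{jk}$ used elsewhere in this section would be to note that $SO(N)$-invariance is equivalent to $M_{jk}p=0$ for all $\{j,k\}\in P_2(N)$ and to characterize this common kernel directly, but the transitivity argument is shorter and self-contained.
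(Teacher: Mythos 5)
Your proof is correct, but it follows a genuinely different route from the paper's. The paper argues by induction on the total degree of $p$: using transitivity of $SO(N)$ on the unit sphere, it observes that $p-p(1,0,\ldots,0)$ vanishes on $S^{N-1}$, invokes Proposition \ref{P:polysphere} to write $p=p(1,0,\ldots,0)+(\norm{X}^2-1)p_1$, shows that $p_1$ inherits the $SO(N)$-invariance because $\mbc[X_1,\ldots,X_N]$ has no zero divisors, and then applies the induction hypothesis to $p_1$. Your argument dispenses with both the induction and Proposition \ref{P:polysphere}: you construct the candidate $q$ explicitly from the restriction of $p$ to the $X_1$-axis (with the $\pi$-rotation in the $X_1X_2$-plane supplying evenness, hence $r(X_1)=q(X_1^2)$), use transitivity of $SO(N)$ on \emph{every} sphere $S^{N-1}(a)$ to verify $p(x)=q(\norm{x}^2)$ pointwise on all of $\mbr^N$, and finish with the vanishing-on-an-open-set principle. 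What each approach buys: yours is shorter, self-contained, and produces $q$ by an explicit formula, needing only the elementary vanishing principle that the paper already uses elsewhere; the paper's proof, by contrast, exercises the algebraic division machinery (Proposition \ref{P:polysphere}) that is one of its themes, and keeps the whole argument at the level of polynomial identities, with the function-level input confined to the single fact that $p-p(1,0,\ldots,0)$ vanishes on the unit sphere. Both proofs use transitivity of the $SO(N)$-action, and both correctly isolate $N>1$ as the hypothesis that makes it available, so neither has an advantage on that score.
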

If $N=1$ and (\ref{E:pinvso}) holds for all $A\in O(1)$ then (\ref{E:pq21}) holds; this is simply the statement that if  a polynomial $p(X)$ is even in the sense that $p(-X)=p(X)$ then all the terms in $p(X)$ with odd powers of $X$ have coefficient $0$.
\begin{proof} We argue by induction on the total degree $d_p$ of $p(X_1,\ldots, X_N)$ (if this polynomial is zero then the result is trivially true). If $d_p=0$ then $p(X_1,\ldots, X_N)$ is just a constant and so (\ref{E:pq21}) holds automatically by taking the right hand side to also be the constant polynomial.

  The polynomial $p_0= p(X_1,\ldots, X_N)-p(1,0,\ldots, 0)$ evaluates to $0$ at all points on the unit sphere $S^{N-1}$. Hence  by Proposition \ref{P:polysphere}  $p_0$ is a polynomial multiple of $\norm{X}^2-1$:
\begin{equation}\label{E:pp1}
p(X_1,\ldots, X_N)=p(1,0,\ldots, 0)+\left(\norm{X}^2-1\right)p_1(X_1,\ldots, X_N)
\end{equation}
where $p_1(X_1,\ldots, X_N)
\in\mbc[X_1,\ldots, X_N]$.  Then for any $A\in SO(N)$ we have
\begin{equation}
 p(X_1,\ldots, X_N)=p(AX)=p(1,0,\ldots, 0)+\left(\norm{AX}^2-1\right)p_1(AX)
\end{equation}
Since $\norm{AX}^2=\norm{X}^2$, this implies
\begin{equation}
(\norm{X}^2-1)\left(p_1(AX) - p_1(X)\right)=0.
\end{equation}
The ring $\mbc[X_1,\ldots, X_N]$ has no zero divisors, it follows that
$$ p_1(X_1,\ldots, X_N)=p_1(AX) \qquad\hbox{for all $A\in SO(N)$.}$$
We see from (\ref{E:pp1}) that $p_1(X_1,\ldots, X_N)$  is either $0$ or has total degree less than the total degree of $p(X_1,\ldots, X_N)$. Thus, inductively, $p_1(X_1,\ldots, X_N)$ is a polynomial in $\norm{X}^2$. Substituting in (\ref{E:pp1}) we see then that $p(X_1,\ldots, X_N)$ is also a polynomial in $\norm{X}^2$. \end{proof}

\begin{prop}\label{P:rotinvpoly2} Suppose $p(X_1,\ldots, X_N)\in\mbc[X_1,\ldots, X_N]$ is invariant under all rotations that fix a particular nonzero vector $t=(t_1,\ldots, t_N)\in\mbr^N$, in the sense that
\begin{equation}\label{E:pinvsot}
\begin{split}
p(X_1,\ldots, X_N)&=p(AX)\\
&\hbox{for all $A \in SO(N)$ for which $At=t$.}
\end{split}
\end{equation}
Then   there is a polynomial $q(Y, Z)\in\mbc[Y,Z]$ such that for any $a>0$,
\begin{equation}\label{E:pq2}
p(x_1,\ldots, x_N)=q(a, t_1x_1+\ldots+t_Nx_N)\qquad\hbox{for all $(x_1,\ldots, x_N)\in S^{N-1}(a)$.}
\end{equation}
Conversely, if a polynomial $p$ is of the form (\ref{E:pq2}) and if $p$ is homogeneous then $p$ satisfies the rotational invariance (\ref{E:pinvsot}).
\end{prop}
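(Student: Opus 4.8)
The plan is to prove both directions by reducing the stabilizer of $t$ to the standard copy of $SO(N-1)$ inside $SO(N)$ and then invoking the rotational-invariance result Proposition \ref{P:rotinvpoly}. Throughout write $c=\norm{t}>0$ and $\la t,x\ra=t_1x_1+\ldots+t_Nx_N$, and work under the hypothesis $N\geq 3$ (for $N=2$ the stabilizer of a nonzero vector is trivial, so (\ref{E:pinvsot}) is vacuous and the conclusion cannot hold for general $p$; this case must be excluded or handled separately).

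For the forward direction, the main step is to show that any $p$ satisfying (\ref{E:pinvsot}) is in fact a polynomial in the two expressions $\la t,X\ra$ and $\norm{X}^2$. First I would choose $R\in SO(N)$ with $Re_1=t/c$ and set $\tilde p(X)=p(RX)$. A short conjugation check shows that $\tilde p$ is invariant under every $B\in SO(N)$ fixing $e_1$: for such $B$ the element $A=RBR^{-1}$ satisfies $At=t$, and since $AR=RB$ we get $p(RBX)=p(ARX)=p(RX)$ by (\ref{E:pinvsot}). Every such $B$ has the block form $\mathrm{diag}(1,B')$ with $B'\in SO(N-1)$ acting only on $X_2,\ldots,X_N$, so expanding $\tilde p=\sum_m X_1^m\,p_m(X_2,\ldots,X_N)$ and comparing coefficients of $X_1^m$ forces each $p_m$ to be $SO(N-1)$-invariant in the $N-1\geq 2$ variables $X_2,\ldots,X_N$. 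Proposition \ref{P:rotinvpoly} then makes each $p_m$ a polynomial in $X_2^2+\ldots+X_N^2=\norm{X}^2-X_1^2$, whence $\tilde p(X)=F(X_1,\norm{X}^2)$ for some $F\in\mbc[Z,W]$. Transforming back with $R^{-1}=R^{T}$ gives $p(Y)=\tilde p(R^{-1}Y)=F\bigl(\tfrac1c\la t,Y\ra,\norm{Y}^2\bigr)$, since $\la e_1,R^{-1}Y\ra=\la Re_1,Y\ra=\tfrac1c\la t,Y\ra$ and $\norm{R^{-1}Y}=\norm{Y}$. Restricting to $S^{N-1}(a)$, where $\norm{X}^2=a^2$, and setting $q(Y,Z)=F(\tfrac1c Z,\,Y^2)$ yields $p(x)=q(a,\la t,x\ra)$, i.e. (\ref{E:pq2}), with a single $q$ valid for all radii.

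For the converse, the key observation is that both $\norm{\cdot}$ and $\la t,\cdot\ra$ are preserved by any $A\in SO(N)$ with $At=t$: orthogonality gives $\norm{Ax}=\norm{x}$, while $At=t$ yields $A^{-1}t=A^{T}t=t$ and hence $\la t,Ax\ra=\la A^{T}t,x\ra=\la t,x\ra$. Thus if $p$ has the form (\ref{E:pq2}) then for any $x\neq 0$, writing $a=\norm{x}$ and noting $Ax\in S^{N-1}(a)$, we obtain $p(Ax)=q(a,\la t,Ax\ra)=q(a,\la t,x\ra)=p(x)$; this extends to $x=0$ since two polynomials agreeing on the dense set $\mbr^N\setminus\{0\}$ coincide, giving (\ref{E:pinvsot}). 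Homogeneity is what lets one run this argument having assumed the representation only on a single sphere: if $p$ is homogeneous of degree $m$ and (\ref{E:pq2}) is known on $S^{N-1}(a_0)$, then rescaling $x\mapsto(a_0/\norm{x})x$ and using $p(\lambda x)=\lambda^m p(x)$ transports the identity $p(Ax)=p(x)$ to every sphere.

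The main obstacle I anticipate is the forward direction's structural step: reducing $\mathrm{Stab}(t)$-invariance to $SO(N-1)$-invariance via the conjugation $\tilde p(X)=p(RX)$, correctly splitting off the $X_1$-dependence, and then pulling the representation back through $R$ so that the resulting $q$ is genuinely a polynomial in $a$ and $\la t,x\ra$. One must also track the dimension hypothesis, since the appeal to Proposition \ref{P:rotinvpoly} is legitimate only because $N-1\geq 2$.
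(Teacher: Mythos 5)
Your proof is correct and is essentially the paper's own argument: conjugating by $R$ to reduce to the stabilizer of $e_1$ is a rigorous rendering of the paper's ``we can assume $t=(1,0,\ldots,0)$,'' after which both proofs expand in powers of $X_1$, apply Proposition \ref{P:rotinvpoly} to the $SO(N-1)$-invariant coefficient polynomials, and assemble $q$; your converse is likewise the paper's idea, except that by exploiting the hypothesis on every sphere you avoid needing homogeneity there. Your $N\geq 3$ caveat is a genuine and correct refinement rather than a defect: the appeal to Proposition \ref{P:rotinvpoly} requires $N-1>1$, and for $N=2$ the invariance hypothesis is vacuous while the conclusion fails (e.g.\ $p=X_2$, $t=(1,0)$: the points $(0,\pm a)\in S^{1}(a)$ would force $q(a,0)=\pm a$), so the proposition as stated should indeed be read with $N\geq 3$.
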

 \begin{proof} Any nonzero vector can be transformed into any other nonzero vector by rotation and scaling. With this in mind, we can assume that $t$ is the vector $(1,0,\ldots, 0)$. Then the hypothesis is that 
 $$\hbox{$p(X_1, A(X_2,\ldots, X_N))=p(X_1,\ldots, X_N)$ for all rotations $A\in SO(N-1)$. }$$
   Writing
  \begin{equation}\label{E:ppm}
  p(X_1,\ldots, X_N)=p_mX_1^m+\ldots +p_1X_1+p_0,
  \end{equation}
 where each $p_j$ is a polynomial in $(X_2,\ldots, X_N)$, we conclude then that each of these coefficient polynomials $p_j$ is invariant under the action of $SO(N-1)$ on $(X_2,\ldots, X_N)$. 
Then, by Proposition  \ref{P:rotinvpoly}, each $p_j$ is a polynomial in $X_2^2+\ldots+X_N^2$:
\begin{equation}\label{E:pjqj}
p_j=q_j(X_2^2+\ldots+X_N^2),
\end{equation}
where $q_j$ is a polynomial in one variable. 

For $a>0$ and any point $x=(x_1,\ldots, x_N)\in S^{N-1}(a)$ the rotational symmetry of $p$ implies that
\begin{equation}\label{E:ppx10}
p(x)=p(x_1, \sqrt{a^2-x_1^2}, \, 0,\ldots, 0).
\end{equation}
Then by (\ref{E:ppm}) and (\ref{E:pjqj}) we have
\begin{equation}\label{E:pqmx}
p(x)= q_m(a^2-x_1^2)x_1^m+\ldots +q_1(a^2-x_1^2)x_1+p_0
\end{equation}
With this in mind let us take
\begin{equation}
q(Y,Z)=Z^mq_m(Y^2-Z^2)+\ldots +Zq_1(Y^2-Z^2)+q_0.
\end{equation}
Then by (\ref{E:pqmx}) we have:
\begin{equation}
p(x)=q(a, x_1),
\end{equation} 
for all $x=(x_1,\ldots, x_N)\in S^{N-1}(a)$.

For the converse statement we note that if $p$ has the form (\ref{E:pq2}) then $p$ as a function on the sphere $S^{N-1}(a)$ is invariant under rotations preserving $(t_1,\ldots, t_N)$. Since $p$ is homogeneous, it is uniquely determined by its values on the sphere. Hence $p$ is invariant under all such rotations.
\end{proof}

\subsection{Zonal spherical harmonics}\label{ss:zsh}  By a  {\em zonal} harmonic we mean a harmonic polynomial $p$ that is invariant under all rotations that preserve a particular direction.  As before we take this direction to be $(1,0,\ldots, 0)$.  Then, by Proposition \ref{P:rotinvpoly2}, $p$, as a function on any sphere $S^{N-1}(a)$ of radius $a>0$, agrees pointwise with a one-variable polynomial $q(a; X_1)$.  
 We will compute $\norm{{\bf M}}^2$ on $q(a; X_1)$.  First we have
 \begin{equation}
 \begin{split}
 M_{1j}(X_1^m)&=X_1*0 - X_jmX_1^{m-1}=-mX_1^{m-1}X_j, \end{split}
 \end{equation}
 and so
  \begin{equation}\label{E:M1j2X1}
 \begin{split}
 M_{1j}^2(X_1^m)&=-mX_1^m+m(m-1)X_1^{m-2}X_j^2 \end{split}
 \end{equation}
 which leads to
 \begin{equation}
 \begin{split}
\sum_{\{1,j\}\in P_2(N)}M_{1j}^2(X_1^m) &= \sum_{j=2}^N M_{1j}^2(X_1^m) \\
&= -m(N-1)X_1^m+ m(m-1)\left(\sum_{j=2}^NX_j^2\right)X_1^{m-2}\\
&=  -m(N-1)X_1^m+ m(m-1)\left(\sum_{j=1}^NX_j^2-X_1^2\right)X_1^{m-2}.
\end{split}
\end{equation}
That is,
  \begin{equation}\label{E:M1j2X2}
 \begin{split}
\norm{\mathbf M}^2(X_1^m)&= -m(N-1)X_1^m+ m(m-1)\left(\sum_{j=1}^NX_j^2-X_1^2\right)X_1^{m-2}.\end{split}
 \end{equation}
The map $L_a$ in (\ref{E:L}) carries any polynomial  multiple of $\norm{X}^2-a^2$ to zero, and so
\begin{equation}\label{E:sphLxm}
 L_a \norm{\mathbf M}^2X_1^m = -m(N-1)L_a(X_1^m)+m(m-1)L_a\left((a^2-X_1^2)X_1^{m-2}\right).
\end{equation}
Then for any polynomial $q(X_1)$ we have:
\begin{equation}\label{E:a2Ds}
 L_a\norm{{\bf M}}^2q(X_1) = L_a\left[-(N-1) X_1q'(X_1) +  (a^2-X_1^2)q''(X_1)\right].
\end{equation}
 Let us recall that $L_a[q(X_1)]$ is the harmonic polynomial in $X_1,\ldots, X_N$ that agrees with $q$ as a function on the sphere $S^{N-1}(a)$.  Now we can show that $L_aq(X_1)$  is homogeneous.

\begin{prop}\label{P:qXmdeg} Suppose $q(a; X)$ is a   polynomial in $X$  that satisfies
\begin{equation}\label{E:legend}
(a^2-X^2)q''(a;X)-(N-1)Xq'(a;X)+m(m+N-2)q(a;X)=0,
\end{equation}
where $N$ is a positive integer and $m$ is a non-negative integer.
Then the harmonic polynomial $L_a[q(a;X_1)]$ that coincides with $q(a;X_1)$ pointwise on the sphere $S^{N-1}(a)$  is  homogeneous of degree $m$. Conversely, if $q(a; X)$ is a   polynomial in $X$  such that $L_a[q(a;X_1)]$ is homogeneous of degree $m$ then (\ref{E:legend}) holds.
\end{prop}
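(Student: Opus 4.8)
The plan is to connect the differential equation (\ref{E:legend}) satisfied by $q(a;X)$ to the eigenvalue characterization of harmonic polynomials given in subsection \ref{ss:sphlapeigen}. The key identity available to us is (\ref{E:a2Ds}), which computes $L_a\norm{{\bf M}}^2q(X_1)$ in terms of the one-variable differential operator $(a^2-X_1^2)\partial^2-(N-1)X_1\partial$ applied to $q$. First I would apply $L_a$ to the full left-hand side of (\ref{E:legend}): since (\ref{E:legend}) asserts that $(a^2-X^2)q''-(N-1)Xq'=-m(m+N-2)q$ as polynomials in $X$, substituting $X=X_1$ and applying $L_a$ gives, via (\ref{E:a2Ds}),
\begin{equation}
L_a\norm{{\bf M}}^2q(X_1)=-m(m+N-2)L_a[q(X_1)].
\end{equation}
Writing $p=L_a[q(X_1)]$, which is harmonic by construction, and using the commutation relation $\norm{\mathbf M}^2L_a=L_a\norm{\mathbf M}^2$ from Proposition \ref{P:spherLapl} together with $L_ap=p$, this reads $\norm{\mathbf M}^2p=-m(m+N-2)p$.

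Next I would invoke the converse direction of subsection \ref{ss:sphlapeigen} to conclude homogeneity of degree $m$. The subtlety is that the eigenvalue criterion (\ref{E:eigenD2}) there is stated for a polynomial that is \emph{already known} to be homogeneous of degree $m$; here homogeneity is precisely what we want to prove. So the honest route is to decompose $p$ into its homogeneous harmonic components, $p=\sum_k p_k$ with $p_k\in\mch^k_N$, and apply $\norm{\mathbf M}^2$ termwise. By (\ref{E:eigenD}) each $p_k$ is an eigenvector with eigenvalue $-k(k+N-2)$, so the relation $\norm{\mathbf M}^2p=-m(m+N-2)p$ forces $-k(k+N-2)=-m(m+N-2)$ for every $k$ with $p_k\neq0$. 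Since $k\mapsto k(k+N-2)$ is strictly increasing on non-negative integers, only $k=m$ survives, giving $p=p_m$, homogeneous of degree $m$.

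For the converse statement of the proposition, I would run this argument backward. Assuming $p=L_a[q(X_1)]$ is homogeneous of degree $m$, it is a homogeneous harmonic polynomial, so by (\ref{E:eigenD}) it satisfies $\norm{\mathbf M}^2p=-m(m+N-2)p$. Applying $L_a$ and using (\ref{E:a2Ds}) gives $L_a[(a^2-X_1^2)q''-(N-1)X_1q']=-m(m+N-2)L_a[q(X_1)]$, i.e.\ $L_a$ annihilates the one-variable polynomial $(a^2-X^2)q''-(N-1)Xq'+m(m+N-2)q$ evaluated at $X_1$. Since $\ker L_a=\mcz^{\leq d}_N(a)$ (Proposition \ref{P:propLa}), this polynomial in $X_1$ lies in the ideal $\mcz_N(a)$; but by the uniqueness argument in Proposition \ref{P:reprsnt}, no nonzero polynomial in the single variable $X_1$ belongs to $\mcz_N(a)$, so it must be the zero polynomial, which is exactly (\ref{E:legend}).

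The main obstacle I anticipate is the first direction's homogeneity conclusion: one must resist simply quoting the converse half of subsection \ref{ss:sphlapeigen}, because that result presupposes homogeneity, whereas $L_a[q(X_1)]$ is a priori only harmonic and of degree $\leq\deg q$. The clean fix is the termwise eigenvalue separation above, which relies on the strict monotonicity of $m\mapsto m(m+N-2)$ to isolate a single degree. A secondary point requiring care is the validity of identity (\ref{E:a2Ds}) for an arbitrary polynomial $q$, not just a monomial; this follows by linearity from (\ref{E:M1j2X2}), so I would note that extension explicitly rather than treat it as automatic.
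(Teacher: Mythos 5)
Your proof is correct for $N\geq 2$ and follows essentially the same route as the paper's: both arguments pivot on the identity (\ref{E:a2Ds}), the commutation relation $\norm{\mathbf M}^2L_a=L_a\norm{\mathbf M}^2$ from Proposition \ref{P:spherLapl}, and separation of the eigenvalues $k(k+N-2)$. The only variation is where the spectral decomposition is invoked. The paper starts from $0=\norm{X}^2\Delta_N L_a[q(X_1)]$, expands via (\ref{E:x2D}), and concludes that $L_a[q(X_1)]$ is an eigenvector of the Euler-type operator $(r\partial_r)^2+(N-2)r\partial_r$, whose eigenspaces are the full homogeneous spaces $\mcp^k_N$ (subsection \ref{ss:rprreigen}); you instead derive $\norm{\mathbf M}^2p=-m(m+N-2)p$ for $p=L_a[q(X_1)]$ and decompose $p$ into homogeneous \emph{harmonic} components, invoking (\ref{E:eigenD}). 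Since (\ref{E:eigenD}) is itself obtained from (\ref{E:x2D}) plus harmonicity, the two routes carry the same content; your explicit termwise eigenvalue-separation is exactly how the paper's appeal to ``the discussion in the context of (\ref{E:eigprpr2})'' must be unwound anyway, so your write-up is, if anything, more careful on that point, and your converse matches the paper's almost verbatim. One caveat: both your strict-monotonicity claim for $k\mapsto k(k+N-2)$ and your assertion that no nonzero polynomial in $X_1$ alone lies in $\mcz_N(a)$ require $N\geq 2$ (for $N=1$ the eigenvalues at $k=0$ and $k=1$ coincide, and $X_1^2-a^2$ is itself a one-variable element of $\mcz_1(a)$). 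The paper flags this and gives a separate argument for $N=1$ in the converse direction, so you should either restrict your statement to $N\geq 2$ or handle that degenerate case separately.
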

When $a$ is fixed we will usually just write $q(X)$ instead of $q(a;X)$. In our main application, though, $a=\sqrt{N}$.
\begin{proof} Since $L_a[q(X_1)]$ is a harmonic polynomial, 
using the identity  (\ref{E:x2D}) we have:
\begin{equation}\label{E:norDX1}
\begin{split}
 0 
=&\norm{X}^2\Delta_{N}\bigl(L_a[q(X_1)]\bigr)\\
&\stackrel{\rm by  (\ref{E:x2D})}{=} \left((r\partial_r)^2+(N-2)r\partial_r +\norm{{\bf M}}^2\right)L_a[q(X_1)]\\
&= \left((r\partial_r)^2+(N-2)r\partial_r\right)L_a[q(X_1)]  +L_a\norm{{\bf M}}^2q(X_1)\\
&\hskip 2in\hbox{(on using the second equation in  {(\ref{E:DSLcom})})}\\
&=   \left((r\partial_r)^2+(N-2)r\partial_r\right)L_a[q(X_1)]  \\
&\hskip 1in +L_a\left[-(N-1) X_1q'(X_1) +  (a^2-X_1^2)q''(X_1)\right].
\end{split}
\end{equation}
Thus
\begin{equation}\label{E:norDX}
\begin{split}
&  \left((r\partial_r)^2+(N-2)r\partial_r\right)L_a[q(X_1)]\\
&\hskip 1in =  -L_a\left[-(N-1) X_1q'(X_1) +  (a^2-X_1^2)q''(X_1)\right].
\end{split}
\end{equation}

If $q$ satisfies the differential equation (\ref{E:legend}) then  \begin{equation}
 \left((r\partial_r)^2+(N-2)r\partial_r\right)L_a[q(X_1)] = m(m+N-2)L_a[q(X_1)],
 \end{equation}
which says that $L_a[q(X_1)]$ is an eigenvector of  the operator 
$$\left((r\partial_r)^2+(N-2)r\partial_r\right)$$
with eigenvalue $m(m+N-2)$. By the discussion in the context of (\ref{E:eigprpr2}) we conclude that $L_a[q(X_1)]$ is homogeneous of degree $m$.

Conversely, suppose that the polynomial $L_a[q(X_1)]$ is homogeneous of degree $m$. Then
$$(r\partial_r)L_a[q(X_1)]=mL_a[q(X_1)]$$
and so
$$ \left((r\partial_r)^2+(N-2)r\partial_r\right)L_a[q(X_1)]=m(m+N-2) L_a[q(X_1)].$$
 Then from (\ref{E:norDX}) we see that
 \begin{equation}\label{E:dqkLa}
(a^2-X_1^2)q''(X_1)-(N-1) X_1q'(X_1) + m(m+N-2) q(X_1) \in \ker L_a.
\end{equation}
 Then by Proposition \ref{P:propLa} 
 $$(a^2-X_1^2)q''(X_1)-(N-1) X_1q'(X_1) + m(m+N-2) q(X_1)\in \mcz^{\leq d}_N(a), $$
  and this means (by (\ref{E:kerLa})) that it is a polynomial multiple of $\norm{X}^2-a^2$. Now no nonzero polynomial multiple of $\norm{X}^2-a^2$ can be a polynomial in only $X_1$, unless $N=1$. Thus $q$ satisfies the differential equation (\ref{E:legend}) if $N>1$. When $N=1$ the relation (\ref{E:dqkLa}) means that $m(m-1)q(X_1)$ is a polynomial multiple of $X_1^2-a^2$. Recall that $m$ is the degree of the harmonic polynomial $L_a[q(X_1)]$; but in dimension $N=1$ the only harmonic polynomials are of degree $\leq 1$, and so $m(m-1)=0$ in this case. Thus, even when $N=1$, the left side of (\ref{E:dqkLa}) is $0$.
 \end{proof}
 
\subsection{Examples of zonal harmonics}\label{ss:exzon} Let us look at a few examples. For degree $2$, we have the decomposition of $X_1^2$ into a harmonic part and a multiple of $\norm{X}^2$:
 \begin{equation}
 X_1^2 =X_1^2-\frac{1}{N}\norm{X}^2\,+\, \frac{1}{N}\norm{X}^2,
 \end{equation}
 and so
 \begin{equation}
 L_aX_1^2= X_1^2-\frac{1}{N}\norm{X}^2\,+\, \frac{1}{N}a^2,
 \end{equation}
 and then we see that
 \begin{equation}
 L_a\left(X_1^2-\frac{1}{N}a^2\right)=  X_1^2-\frac{1}{N}\norm{X}^2.
 \end{equation}
 The point here is that the polynomial on the right hand side is homogeneous (as in Proposition \ref{P:qXmdeg}). 
 Next, for degree $3$, we have the decomposition
 \begin{equation}
 X_1^3 = X_1^3-\frac{3}{N+2}\norm{X}^2X_1\,+\, \frac{3}{N+2}\norm{X}^2X_1,
 \end{equation}
 where the first two terms on the right hand side form a harmonic polynomial; then
 \begin{equation}\label{E:LaX13}
 L_a\left(X_1^3-\frac{3}{N+2}a^2X_1\right)= X_1^3-\frac{3}{N+2}\norm{X}^2X_1. \end{equation}
 Again, we have found a homogeneous  degree $3$ harmonic polynomial that, when restricted to the sphere $S^{N-1}(a)$, is invariant under rotations around the $X_1$-axis.
 
 \subsection{Zonal harmonics and Gegenbauer polynomials}\label{ss:gegenzon} 
The Gegenbauer differential equation is
\begin{equation}\label{E:Ggn}
(1-y^2)p''(y) -(2b+1)yp'(y) +m(m+2b)p(y)=0.
\end{equation}
We take the Gegenbauer polynomial $C^{(b)}_m(y)$ to be the degree-$m$  monic polynomial that satisfies this differential equation. If $p$ is monic of degree $n$ then the coefficient of $y^n$ in $(1-y^2)p''(y)-(2b+1)yp'(y)$ is 
$$-n(n-1)-(2b+1)n=-n(n+2b),$$
which shows that $n$ must be equal to $m$ for $p$ to be a solution of (\ref{E:Ggn}).

To match (\ref{E:Ggn})  with (\ref{E:legend}) we take
$$p(y)= q(a; ay)$$
Then
$$p'(y)=aq'(a;ay)\qquad\hbox{and}\qquad p''(y)=a^2q''(a;ay)$$
and so
\begin{equation}
\begin{split}
&(1-y^2)p''(y) -(2b+1)yp'(y) +m(m+2a)p(y)\\
&=(a^2-a^2y^2)q''(a;ay)-(2b+1)ayq'(a;ay)+m(m+2b)q(a;ay)\\
&=(a^2-x^2)q''(a;x)-(2b+1)xq'(a;x)+m(m+2b)q(a;x),
\end{split}
\end{equation}
where $x=ay$. Setting $2b+1=N-1$ it follows from (\ref{E:legend}) that $p(y)$ does satisfy the differential equation (\ref{E:Ggn}). Thus
\begin{equation}\label{E:qmaXGgn}
q_m(a;X) =C^{((N-2)/2)}_m(X/a).
\end{equation}
These are the polynomials that coincide on the sphere   $S^{N-1}(a)$ with the restrictions of harmonic polynomials.

\subsection{The inner-product $\la\cdot,\cdot\ra_{a,N}$}\label{ss:ipkh}  Consider the sesquilinear pairing on $\mcp^d_N$ given by
\begin{equation}\label{E:ippq}
\la p, q\ra_{a, N}=\int_{S^{N-1}(a)}p(x)\overline{q(x)}\,d\ovs(x).
\end{equation}
This satisfies all the conditions for an inner-product except that some nonzero vectors might have norm $0$; these are exactly those polynomials that evaluate to $0$ on $S^{N-1}(a)$. If $p\in\mch^d_N$ satisfies $\la p, p\ra_{S^{N-1}(a)}=0$ then $p$ evaluates to $0$ at all points on $S^{N-1}(a)$ and so, since $p$ is harmonic, it is the zero polynomial. Thus,
$\la\cdot,\cdot\ra_{a, N}$  {\em is an inner-product on}  $\mch^{ d}_N$ for every $d\geq 0$, and hence on $\mch_N$, the space of all harmonic polynomials in $\mcp_N$.

We have seen in Lemma \ref{L:zerof} that $\la\cdot,\cdot\ra_{a,N}$ {\em is an inner-product on the space $\mcp_k$ of polynomials in $X_1,\ldots, X_k$ where} $1\leq  k<N$.

\subsection{Adjoint operators}\label{ss:adj} Using the relationship between integration over spheres and Gaussian integration we determine the adjoints of differential operators over spheres.

\begin{prop}\label{P:adj}
For any polynomials $p, q\in\mcp_N$, and any $j, k\in \{1,\ldots, N\}$,
\begin{equation}\label{E:Mjkadj}
\int_{S^{N-1}(a)}  (M_{jk}p)(x)q(x)\,d\ovs(x)=-\int_{S^{N-1}(a)}  p(x)(M_{jk}q)(x) \,d\ovs(x),
\end{equation}
for any radius $a>0$.  Moreover,
\begin{equation}\label{E:DSadj}
\int_{S^{N-1}(a)}  (\norm{{\bf M}}^2p)(x)q(x)\,d\ovs(x)= \int_{S^{N-1}(a)}  p(x) (\norm{{\bf M}}^2q)(x) \,d\ovs(x).
\end{equation}
The operators $M_{jk}$ are   skew-hermitian and the operator $\norm{{\bf M}}^2$ is hermitian on the  spaces $\mcp_m$, equipped with the inner-product $\la\cdot,\cdot\ra_{a,N}$, for any integer $m$ with $1\leq m<N$:
\begin{equation}\label{E:skewherm}
\begin{split}
\la p, M_{jk}q\ra_{a, N} &=-\la M_{jk}p, q\ra_{a,N} \\
\la p, \norm{{\bf M}}^2q\ra_{a, N} &= \la \norm{{\bf M}}^2p, q\ra_{a,N}
\end{split}
\end{equation}
for all $p, q\in \mcp_m$, polynomials in the variables $X_1,\ldots, X_m$ for $m<N$.

On the space $\mch_N$, equipped with the inner-product $\la\cdot,\cdot\ra_{a,N}$, the operators $M_{jk}$ are skew-hermitian and the operator $\norm{{\bf M}}^2$ is hermitian: that is, the relations (\ref{E:skewherm}) hold also for $p, q\in\mch_N$.  \end{prop}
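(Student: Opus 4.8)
The plan is to reduce the entire proposition to the single skew-adjointness identity (\ref{E:Mjkadj}) for one rotation generator $M_{jk}$, and to prove that identity by transporting it from Gaussian space to the sphere. The bridge is Proposition \ref{P:ipsph} together with (\ref{E:ipa}): for a function homogeneous of degree $e$, its integral over $S^{N-1}(a)$ differs from its integral against the Gaussian measure $\mu$ only by a factor depending on $e$, $a$, and $N$ alone. The decisive structural fact is that $M_{jk}$ preserves homogeneous degree (Proposition \ref{P:Mjkquot}): if $p\in\mcp^d_N$ and $q\in\mcp^{d'}_N$, then both $(M_{jk}p)q$ and $p(M_{jk}q)$ are homogeneous of the same degree $d+d'$, so the spherical-to-Gaussian conversion factor is identical on the two sides of (\ref{E:Mjkadj}). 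Hence it suffices to prove the corresponding identity for the Gaussian integral.

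First I would, by bilinearity, reduce to the case that $p$ and $q$ are homogeneous. Then I would establish the Gaussian skew-adjointness
\[
\int_{\mbr^N}(M_{jk}p)\,q\;d\mu = -\int_{\mbr^N}p\,(M_{jk}q)\;d\mu
\]
by integrating by parts in the variables $X_j$ and $X_k$ against the weight $(2\pi)^{-N/2}e^{-\norm{x}^2/2}$. The boundary terms vanish (polynomial times Gaussian); each integration by parts of $X_j\partial_k$ reproduces a term $-X_j\partial_k$ together with an extra multiplication term $X_jX_k$ coming from differentiating the Gaussian weight, and --- crucially --- the extra terms arising from $X_j\partial_k$ and from $X_k\partial_j$ are symmetric in $j,k$ and cancel. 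What survives is exactly $-M_{jk}$, giving the displayed identity. Multiplying through by the common conversion factor and summing the homogeneous pieces yields (\ref{E:Mjkadj}) for all $p,q\in\mcp_N$ and every radius $a>0$.

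From here the remaining statements follow formally. Applying (\ref{E:Mjkadj}) twice shows that each $M_{jk}^2$ is self-adjoint for the bilinear spherical pairing, and summing over $\{j,k\}\in P_2(N)$ gives (\ref{E:DSadj}) for $\norm{{\bf M}}^2$. To pass from the bilinear identities to the sesquilinear statements (\ref{E:skewherm}) I would replace $q$ by $\overline q$ and use that $M_{jk}$ has real coefficients, so that $M_{jk}\overline q=\overline{M_{jk}q}$. On $\mcp_m$ with $1\le m<N$ the pairing $\la\cdot,\cdot\ra_{a,N}$ is a genuine inner product by Lemma \ref{L:zerof}, and on $\mch_N$ it is an inner product by subsection \ref{ss:ipkh}; moreover $M_{jk}$ and $\norm{{\bf M}}^2$ map $\mch_N$ into itself by Propositions \ref{P:mjkharm} and \ref{P:spherLapl}, so on $\mch_N$ they are honestly skew-hermitian and hermitian operators, not merely that the identities hold.

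I expect the only real obstacle to be the degree bookkeeping that makes the two conversion factors coincide; this is precisely where one must use that $M_{jk}$ preserves degree rather than raising or lowering it. As a cross-check (and an alternative route to (\ref{E:Mjkadj}) that avoids the Gaussian passage altogether) one may note that $M_{jk}$ generates the one-parameter group of rotations in the $X_jX_k$-plane, which preserves both $S^{N-1}(a)$ and the uniform measure $\ovs$; differentiating the rotation-invariance of $\int_{S^{N-1}(a)} pq\,d\ovs$ at the identity and applying the Leibniz rule $M_{jk}(pq)=(M_{jk}p)q+p(M_{jk}q)$ gives (\ref{E:Mjkadj}) directly. Either way, the single identity (\ref{E:Mjkadj}) carries the whole content of the proposition.
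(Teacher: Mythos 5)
Your proposal is correct and follows essentially the same route as the paper: reduce by bilinearity to homogeneous (monomial) $p,q$, transfer the spherical integral to a Gaussian one via (\ref{E:ipa}), integrate by parts against the Gaussian weight, and convert back, with (\ref{E:DSadj}) and the (skew-)hermitian statements then following from the inner-product facts in Lemma \ref{L:zerof} and subsection \ref{ss:ipkh}. Your explicit remarks on why the two conversion factors agree (degree preservation by $M_{jk}$) and on passing from the bilinear to the sesquilinear identities by conjugation are details the paper leaves implicit, but they do not change the argument.
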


\begin{proof} Without loss of generality and for simplicity of notation, let $j=1$ and $k=2$. Both sides of (\ref{E:Mjkadj}) are bilinear in $(p,q)$, and so we may assume that $p$ and $q$ are both monomials, and, in particular, homogeneous. Then, integrating over any sphere of radius $a>0$ and using (\ref{E:ipa}) we have
\begin{equation}\label{E:pqint}
\begin{split}
&\int_{S^{N-1}(a)}p(x)\left(x_1\partial_{x_2}-x_2\partial_{x_1}\right)q(x)\,d\ovs(x) \\
&= a^d \frac{ \Gamma\left(\frac{ N}{2}\right)  }{ 2^ {\frac{d}{2}} \Gamma\left(\frac{d+N}{2}\right)}
(2\pi)^{-N/2}\int_{\mbr^N}p(x)\left[\left(x_1\partial_{x_2}-x_2\partial_{x_1}\right)q(x)\right]e^{-\norm{x}^2/2}\,dx
\end{split}
\end{equation}
where $d$ is the sum of the homogeneity degrees of $p$ and $q$. Integrating by parts and simplifying the algebra, the right hand side of (\ref{E:pqint}) is equal to:
\begin{equation}
- a^d \frac{ \Gamma\left(\frac{ N}{2}\right)  }{ 2^ {\frac{d}{2}} \Gamma\left(\frac{d+N}{2}\right)}
(2\pi)^{-N/2}\int_{\mbr^N}q(x)\left[\left(x_1\partial_{x_2}-x_2\partial_{x_1}\right)p(x)\right]e^{-\norm{x}^2/2}\,dx.
\end{equation}
Now we reverse the process and use (\ref{E:ipa}) again to rewrite this last expression as an integral over the sphere $S^{N-1}(a)$, to obtain
$$-\int_{S^{N-1}(a)}q(x)\left(x_1\partial_{x_2}-x_2\partial_{x_1}\right)p(x)\,d\ovs(x) .$$
Thus,
\begin{equation}
\begin{split}
&\int_{S^{N-1}(a)}p(x)\left(x_1\partial_{x_2}-x_2\partial_{x_1}\right)q(x)\,d\ovs(x) \\
&= -\int_{S^{N-1}(a)}q(x)\left(x_1\partial_{x_2}-x_2\partial_{x_1}\right)p(x)\,d\ovs(x).
\end{split}
\end{equation}
This proves (\ref{E:Mjkadj}). Applying this twice, and using  the definition of $\norm{{\bf M}}^2$ we obtain (\ref{E:DSadj}). 

The statements about skew-adjointness of $M_{jk}$ and self-adjointness of $\norm{{\bf M}}^2$ follow by recalling that $\la\cdot,\cdot\ra_{a, N}$ is an inner-product on $\mch_m$, for $m<N$, and that $\la\cdot,\cdot\ra_{a, N}$ is an inner-product on $\mch_N$.
\end{proof}
 
 \begin{prop}\label{P:orthopq} The subspaces $\mch^m_N$ and $\mch^n_N$ in $\mch_N$ are orthogonal, with respect to the inner-product $\la\cdot,\cdot\ra_{a,N}$, if $m\neq n$.
 \end{prop}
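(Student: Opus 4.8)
The plan is to recognize $\mch^m_N$ and $\mch^n_N$ as eigenspaces of the self-adjoint operator $\norm{{\bf M}}^2$ attached to distinct eigenvalues, and then to invoke the classical fact that eigenvectors of a Hermitian operator corresponding to distinct eigenvalues are automatically orthogonal. All of the ingredients have already been assembled: the eigenvalue computation (\ref{E:eigenD}) and the self-adjointness of $\norm{{\bf M}}^2$ on $\mch_N$ proved in Proposition \ref{P:adj}.

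Concretely, I would first recall from (\ref{E:eigenD}) that any $p\in\mch^m_N$ satisfies
\[
\norm{{\bf M}}^2 p=-m(m+N-2)\,p,
\]
so $\mch^m_N$ lies in the eigenspace of $\norm{{\bf M}}^2$ for the real eigenvalue $\lambda_m=-m(m+N-2)$, and similarly $\mch^n_N$ for $\lambda_n=-n(n+N-2)$. Then, taking $p\in\mch^m_N$ and $q\in\mch^n_N$, I would apply the self-adjointness relation (the second identity in (\ref{E:skewherm})),
\[
\la \norm{{\bf M}}^2 p, q\ra_{a,N}=\la p,\norm{{\bf M}}^2 q\ra_{a,N},
\]
and evaluate each side using the respective eigenvalue equations. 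Because the pairing is sesquilinear, the left side gives $\lambda_m\la p,q\ra_{a,N}$ while the right side gives $\overline{\lambda_n}\,\la p,q\ra_{a,N}$; since $\lambda_n$ is real these reduce to a clean identity $(\lambda_m-\lambda_n)\la p,q\ra_{a,N}=0$.

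To finish, I would note that the function $m\mapsto m(m+N-2)$ is strictly increasing on the non-negative integers (its graph is a parabola with vertex at $-(N-2)/2\le 0$ when $N\ge 2$), so $\lambda_m\ne\lambda_n$ whenever $m\ne n$, exactly as observed in subsection \ref{ss:sphlapeigen}; this forces $\la p,q\ra_{a,N}=0$. No genuine obstacle arises here: the only point demanding care is the bookkeeping of the conjugate-linear slot of $\la\cdot,\cdot\ra_{a,N}$, where one must use reality of the eigenvalues to see that both sides collapse to the \emph{same} scalar multiple of $\la p,q\ra_{a,N}$. (The parabola argument degenerates only when $N=1$, but there $\mch^m_1=0$ for $m\ge2$ and the single remaining pair $\{0,1\}$ is checked directly on the two-point sphere $S^0(a)$.)
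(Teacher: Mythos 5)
Your proposal is correct and is essentially the paper's own argument: the paper likewise proves Proposition \ref{P:orthopq} by noting that $\mch^m_N$ and $\mch^n_N$ are eigenspaces of the operator $\norm{{\bf M}}^2$, self-adjoint with respect to $\la\cdot,\cdot\ra_{a,N}$ by Proposition \ref{P:adj}, with distinct eigenvalues $-m(m+N-2)\neq -n(n+N-2)$. Your write-up is in fact slightly more careful than the paper's, since you verify the strict monotonicity of $m\mapsto m(m+N-2)$ and handle the degenerate case $N=1$ (where $\lambda_0=\lambda_1=0$) directly on $S^0(a)$, a point the paper's one-line proof passes over.
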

 \begin{proof} The operator  $\norm{{\bf M}}^2$ is self-adjoint on $\mch_N$, with respect to $\la\cdot,\cdot\ra_{a,N}$, and has distinct eigenvalues ($-m(m+N-2)$ and $-n(n+N-2)$) on $\mch^m_N$ and on $\mch^n_N$ if $m\neq n$. Hence these subspaces are orthogonal. \end{proof}
 
 Finally, let us note that the operator $L_a$ is also self-adjoint:
 
  \begin{prop}\label{P:Laselfadj} 
  The operator $L_a:\mcp_N\to \mcp_N$ is self-adjoint in the sense that
  \begin{equation}\label{E:Lasa}
  \la L_ap,q\ra_{a,N}=\la p, L_aq\ra_{a,N}
  \end{equation}
  for all $p, q\in\mcp_N$, and all $a>0$.
  
  \end{prop}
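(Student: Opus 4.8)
The plan is to read off the result directly from the defining property of $L_a$ rather than from any adjoint computation. Recall from Proposition \ref{P:harmsphere} that for every polynomial $p\in\mcp_N$ the harmonic polynomial $L_ap$ is characterized by the fact that it \emph{agrees with $p$ as a function on the sphere} $S^{N-1}(a)$. On the other hand, the pairing $\la\cdot,\cdot\ra_{a,N}$ is by definition
$$
\la f, g\ra_{a,N}=\int_{S^{N-1}(a)}f(x)\overline{g(x)}\,d\ovs(x),
$$
so it depends only on the pointwise values of its arguments on $S^{N-1}(a)$. These two facts together make the proposition almost immediate.

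Concretely, I would argue as follows. Since $L_ap$ and $p$ are equal as functions on $S^{N-1}(a)$, replacing $p$ by $L_ap$ in the integrand does not change the integral, giving
$$
\la L_ap, q\ra_{a,N}=\int_{S^{N-1}(a)}(L_ap)(x)\overline{q(x)}\,d\ovs(x)=\int_{S^{N-1}(a)}p(x)\overline{q(x)}\,d\ovs(x)=\la p, q\ra_{a,N}.
$$
Likewise, because $L_aq$ coincides with $q$ on $S^{N-1}(a)$ (and complex conjugation is taken pointwise), replacing $q$ by $L_aq$ in the second slot leaves the integral unchanged:
$$
\la p, L_aq\ra_{a,N}=\int_{S^{N-1}(a)}p(x)\overline{(L_aq)(x)}\,d\ovs(x)=\int_{S^{N-1}(a)}p(x)\overline{q(x)}\,d\ovs(x)=\la p, q\ra_{a,N}.
$$
Comparing the two displays yields $\la L_ap, q\ra_{a,N}=\la p, L_aq\ra_{a,N}$, which is (\ref{E:Lasa}). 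In fact the argument shows the slightly stronger statement that both quantities equal $\la p, q\ra_{a,N}$, reflecting that $L_a$ acts as the identity on the sphere.

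I do not expect a genuine obstacle here; the content is entirely carried by Proposition \ref{P:harmsphere}. The one point that warrants a word of care is that $\la\cdot,\cdot\ra_{a,N}$ is only a sesquilinear \emph{form} on all of $\mcp_N$ (it is degenerate there, becoming an inner product only on $\mch_N$ or on $\mcp_k$ for $k<N$ by Lemma \ref{L:zerof} and subsection \ref{ss:ipkh}). This causes no difficulty, however, since the computation above manipulates the defining integral directly and never uses nondegeneracy. Thus the identity (\ref{E:Lasa}) holds for all $p,q\in\mcp_N$ and all $a>0$, and combined with Proposition \ref{P:propLa} it justifies calling $L_a$ the orthogonal projection onto $\mch^{\leq d}_N$ wherever the form is nondegenerate.
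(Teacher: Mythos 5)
Your proof is correct and is essentially the same as the paper's: both arguments use only the defining property (Proposition \ref{P:harmsphere}) that $L_ap$ and $p$ agree pointwise on $S^{N-1}(a)$, so each side of (\ref{E:Lasa}) equals $\la p,q\ra_{a,N}$. The paper writes this as a single chain of equalities rather than two displays, but the content is identical; your remark that nondegeneracy of the form is never needed is a correct and worthwhile observation.
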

  \begin{proof}
  Let $p, q\in\mcp_N$; then
   \begin{equation}
   \begin{split}
   \la L_ap,q\ra_{a,N}&=\int_{S^{N-1}(a)}[L_ap](x)\overline{q(x)}\,d\ovs(x)\\
   &=\int_{S^{N-1}(a)}p(x)\overline{q(x)}\,d\ovs(x)\\
   &\qquad\hbox{(because $L_ap(x)=p(x)$ for $x\in S^{N-1}(a)$)}\\
   &=\int_{S^{N-1}(a)}p(x)\overline{[L_aq](x)}\,d\ovs(x)\\
   &=\la p, L_aq\ra_{a,N},
\end{split}
   \end{equation}
   where again in the last line we used the fact that $L_aq$ and $q$ agree pointwise on $S^{N-1}(a)$.
   \end{proof}

 \subsection{Orthogonality of the zonal harmonic polynomials}\label{ss:zorth}  We recall from subsection \ref{ss:ipkh} that $\la\cdot,\cdot\ra_{a,N}$ is an inner-product on $\mcp_{1}$, the  space of polynomials in $X_1$.  Let $q_m(X)$ be a degree $m$  polynomial for which $L_aq_m(X_1)\in\mcp_N$ is homogeneous of degree $m$; we take $q_0(X_1)$ to be $1$.  To be definite, we take $q_m(X_1)$ to be monic, with the highest degree term being $X_1^m$.
 
 Then, by  Proposition \ref{P:orthopq}  and the fact that $L_aq(X_1)$ is harmonic, we see that
 \begin{equation}\label{E:Laq1ip}
 \la L_aq_m(X_1), L_aq_n(X_1)\ra_{a, N}=0\qquad\hbox{if $m\neq n$}.
 \end{equation}
 Now recall that $L_aq_m(X_1)$ is equal to $q_m(X_1)$ as functions when restricted to the sphere $S^{N-1}(a)$. Hence,   (\ref{E:Laq1ip}) is equivalent to:
   \begin{equation}\label{E:Laq1ip2}
 \la  q_m(X_1), q_n(X_1)\ra_{a, N}=0\qquad\hbox{if $m\neq n$}.
 \end{equation}
 Thus $q_m(X_1)$ is a monic polynomial of degree $m$ that is orthogonal to the polynomials  $q_0(X_1), \ldots, q_{m-1}(X_1)$, for $m\geq 1$. Moreover, since each $q_n(X_1)$ is of degree $n$, it follows inductively that $q_0(X_1), \ldots, q_n(X_1)$ form a basis of the space $\mcp^{\leq n}_1$ of polynomials in $X_1$ of degree $\leq n$. 
 
Thus, using now the notation $q_m(a;X_1)$ instead of $q_m(X_1)$ to stress the role of $a$, we have
 \begin{equation}\label{E:qmX1}
q_m(a;X_1)= \left(I-{\dot\Pi}^{\leq m}_{1,N}\right)X_1^m,
\end{equation}
where
\begin{equation} \label{E:defdotPiN}
{\dot\Pi}^{\leq m}_{1,N}: \mcp^{\leq m}_{1}\to\mcp^{\leq m-1}_{1}
\end{equation}
is the orthogonal projection within the finite-dimensional space $\mcp^{\leq m}_1$, with respect to the inner-product $\la\cdot,\cdot\ra_{a,N}$.   

The inner-product of two polynomials in $X_1$ is given by
\begin{equation}\label{E:ippqX1}
\begin{split}
\la p(X_1), q(X_1)\ra_{a,N} &= \frac{c_{N-2}}{c_{N-1}a^{N-1}   } a\int_{-a}^a p(x)\overline{q(x)} (a^2-x^2)^{\frac{N-3}{2}}\,dx\\
&= \frac{c_{N-2}}{c_{N-1}a    }  \int_{-a}^a p(x)\overline{q(x)} \left(1-\frac{x^2}{a^2}\right)^{\frac{N-3}{2}}\,dx
\end{split}
\end{equation}
where we have used the disintegration formula (\ref{E:spheredisint3}) with $k=1$ and $d=N-1$, and the denominator reflects the normalization by the `surface area' of $S^{N-1}(a)$.  
Thus we have established the following result.

\begin{prop}\label{P:orthoqmX}
The zonal harmonic polynomials $q_0(X), q_1(X), \ldots$ are obtained by Gram-Schmidt orthogonalization of $1, X, X^2, \ldots$ relative to the inner-product given by 
\begin{equation}\label{E:pqaNip}
\la p, q\ra_{a,N}  = \frac{c_{N-2}}{c_{N-1}a    }  \int_{-a}^a p(x)\overline{q(x)} \left(1-\frac{x^2}{a^2}\right)^{\frac{N-3}{2}}\,dx.
\end{equation}
\end{prop}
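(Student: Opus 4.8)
The plan is to recognize Proposition \ref{P:orthoqmX} as essentially a repackaging of the orthogonality and minimality facts assembled in subsection \ref{ss:zorth}, so that the argument reduces to matching the defining properties of the $q_m$ against the characterization of the Gram--Schmidt output. First I would recall the defining property: $q_m(X)$ is the monic degree-$m$ polynomial for which the harmonic extension $L_a q_m(X_1) \in \mcp_N$ is homogeneous of degree $m$, with $q_0 = 1$. The essential analytic input is already in hand as (\ref{E:Laq1ip2}), namely $\la q_m(X_1), q_n(X_1)\ra_{a,N} = 0$ whenever $m \neq n$; this in turn rests on the self-adjointness of $\norm{{\bf M}}^2$ (Proposition \ref{P:adj}) together with the distinct-eigenvalue orthogonality of harmonics of different degrees (Proposition \ref{P:orthopq}).

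Next I would invoke the standard uniqueness characterization of Gram--Schmidt: applied to the ordered basis $1, X, X^2, \ldots$ of $\mcp_1$, the process produces the \emph{unique} sequence $p_0, p_1, \ldots$ with each $p_m$ monic of degree $m$ and orthogonal to $\mcp^{\leq m-1}_1$. To see that the $q_m$ satisfy this, note that $q_0, \ldots, q_{m-1}$ are monic of distinct degrees $0, 1, \ldots, m-1$ and hence form a basis of $\mcp^{\leq m-1}_1$; combined with (\ref{E:Laq1ip2}) this gives $q_m \perp \mcp^{\leq m-1}_1$. Since $q_m$ is also monic of degree $m$, an easy induction on $m$ identifies $q_m$ with the $m$-th Gram--Schmidt vector. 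This is exactly the content already recorded in the projection formula (\ref{E:qmX1}).

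Finally I would identify the inner-product appearing in the statement: the polar disintegration computation (\ref{E:ippqX1}) expresses $\la p, q\ra_{a,N}$ on $\mcp_1$ precisely as the weighted one-dimensional integral (\ref{E:pqaNip}), so no further calculation is needed. I expect no serious obstacle in this last packaging step, which is bookkeeping; the genuine mathematical weight sits upstream, in establishing (\ref{E:Laq1ip2}) through the adjointness and spectral results. Given those, the proof is a short synthesis.
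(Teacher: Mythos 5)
Your proposal is correct and follows essentially the same route as the paper's own argument in subsection \ref{ss:zorth}: orthogonality of the $q_m$ via Proposition \ref{P:orthopq} and the pointwise agreement of $L_aq_m(X_1)$ with $q_m(X_1)$ on $S^{N-1}(a)$, then the monic-plus-orthogonal-to-lower-degree characterization identifying $q_m$ with the Gram--Schmidt output (the paper's (\ref{E:qmX1})), and finally the disintegration formula (\ref{E:ippqX1}) to recognize the weighted one-dimensional inner-product (\ref{E:pqaNip}). No gaps; the synthesis you describe is exactly what the paper does.
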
 
Note that here, by the Gram-Schmidt process, we mean the process (\ref{E:qmX1}) of taking the orthogonal projection onto the orthogonal complement of the span of the previously obtained basis vectors (we do not normalize).

\subsection{Disintegration}\label{ss:dis} Here we review a disintegration formula that is useful when integrating functions over a sphere.
Let $S^d(a)$ be the sphere of radius $a$, center $0$, in $\mbr^{d+1}$. For $k\in\{1,\ldots, d-1\}$ and $x\in \mbr^k$ we have the slice $\{y\in\mbr^{d+1-k}: (x,y)\in S^d(a)\}$, which is a sphere of radius 
\begin{equation}
a_x=\sqrt{a^2-\norm{x}^2}.
\end{equation}
Thus the slice is $S^{d-k}(a_x)$. 
The volume of this slice is
\begin{equation}
a_x^{d-k}c_{d-k},
\end{equation}
where $c_j$ is the surface area of the $j$-dimensional unit sphere:
\begin{equation}\label{E:cjsurf}
c_{j}= 2\frac{\pi^{\frac{j+1}{2}}}{\Gamma\left(\frac{j+1}{2}\right)}.
\end{equation}
We use the following disintegration formula that expresses the integral of a function $f$ on the sphere $S^d(a)$  of radius $a$, center $0$, in $\mbr^{d+1}$:
 \begin{equation}\label{E:spheredisint2}
\int_{S^d(a)}f\,d\sigma =\int_{x\in B_k(a)}\left[\int_{y\in S^{d-k}(a_{x})} f(x,y)\,d\sigma(y)\right]\frac{a}{a_x}\,dx,
\end{equation}
where $S^d(a)$ is the sphere of radius $a$, centered at $0$, in $\mbr^{d+1}$, and $B_k(a)$ is the ball of radius $a$, center $0$, in $\mbr^k$ (for a proof see \cite{SenGRL2018}). If $f$ depends only on $x\in \mbr^k$ then we have
 \begin{equation}\label{E:spheredisint3}
\int_{S^d(a)}f(x)\,d\sigma(x,y) =c_{d-k}a\int_{x\in B_k(a)} f(x) a_x^{d-k-1}\,dx.
\end{equation}

 \section{Hermite limits for monomials over large spheres}\label{s:ol}

In this section we show that monomials, suitably projected, over the sphere $S^{N-1}(\sqrt{N})$ converge to  Hermite polynomials.

\subsection{The subspaces $\mcp^d$ and projections $\Pi^d$}\label{ss:subproj}
 We equip  the space $\mcp$ of all polynomials in variables $X_1, X_2, \ldots$ with the Gaussian $L^2$ inner-product:
 \begin{equation}
 \la p(X_1,\ldots, X_N), q(X_1,\ldots, X_N)\ra=\la p, q\ra_{L^2(\gamma_N)}
 \end{equation}
 where $\gamma_N$ is the standard Gaussian measure on $\mbr^N$:
 $$d\gamma_N(x)=(2\pi)^{-N/2}e^{-\norm{x}^2/2}\,dx.$$
 Each space $\mcp^{\leq d}_N$ is finite-dimensional and there is an orthogonal projection
 \begin{equation}
 \Pi^{\leq d}_N:\mcp^{\leq d}_N\to\mcp^{\leq d-1}_N,
 \end{equation}
 for all $d\geq 1$ and $N\geq 1$. We can drop the subscript $N$ from $\Pi_{d,N}$ because  of the following observation.
 
 \begin{lemma}\label{L:PidNM}
If $M>N$ then
 \begin{equation}\label{E:PiMN}
  \Pi^{\leq d}_M|\mcp^{\leq d}_N = \Pi^{\leq d}_N.
  \end{equation}
  \end{lemma}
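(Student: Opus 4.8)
The plan is to exploit the product structure of the Gaussian measure, $\gamma_M = \gamma_N\otimes\gamma_{M-N}$, under which the extra variables $X_{N+1},\ldots,X_M$ simply integrate out whenever one of the two entries of an inner-product involves only $X_1,\ldots,X_N$. Concretely, I would fix $p\in\mcp^{\leq d}_N$ and set $r:=\Pi^{\leq d}_N p$, then verify that $r$ already satisfies the two defining properties of $\Pi^{\leq d}_M p$, so that uniqueness of the orthogonal projection forces $\Pi^{\leq d}_M p=r$, which is exactly the asserted identity $(\ref{E:PiMN})$.

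First I would recall the characterization of the orthogonal projection: $r=\Pi^{\leq d}_N p$ is the unique element of $\mcp^{\leq d-1}_N$ for which $p-r$ is orthogonal to $\mcp^{\leq d-1}_N$ with respect to $\la\cdot,\cdot\ra_{L^2(\gamma_N)}$. Since $\mcp^{\leq d}_N\subseteq\mcp^{\leq d}_M$, the operator $\Pi^{\leq d}_M$ is defined on $p$, and since $\mcp^{\leq d-1}_N\subseteq\mcp^{\leq d-1}_M$ we have $r\in\mcp^{\leq d-1}_M$ for free. It therefore remains to check that $p-r$ is $\gamma_M$-orthogonal to all of $\mcp^{\leq d-1}_M$. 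By sesquilinearity it suffices to test against monomials $s=X_1^{\alpha_1}\cdots X_M^{\alpha_M}$ of total degree $|\alpha|\leq d-1$. Because $p-r$ involves only $X_1,\ldots,X_N$ and $\gamma_M$ is a product measure, the integral $\la p-r, s\ra_{L^2(\gamma_M)}$ factors as $\la p-r,\,X_1^{\alpha_1}\cdots X_N^{\alpha_N}\ra_{L^2(\gamma_N)}$ times a product of one-dimensional Gaussian moments $\int_{\mbr}x^{\alpha_j}\,d\gamma_1(x)$ over $j=N+1,\ldots,M$. The decisive observation is a degree bound: the ``first $N$'' factor $X_1^{\alpha_1}\cdots X_N^{\alpha_N}$ has degree $\alpha_1+\cdots+\alpha_N\leq|\alpha|\leq d-1$, so it lies in $\mcp^{\leq d-1}_N$; hence the first factor vanishes by the defining orthogonality of $r$, and the whole product is $0$ regardless of the moments.

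I do not anticipate a genuine obstacle. The only points requiring care are the bookkeeping that the product structure is precisely what lets the extra variables drop out, and the degree bound $\alpha_1+\cdots+\alpha_N\leq d-1$ guaranteeing that the associated ``first $N$'' monomial remains in $\mcp^{\leq d-1}_N$. Once these are in hand, orthogonality against a spanning set of $\mcp^{\leq d-1}_M$ gives orthogonality against the whole space, and uniqueness of the orthogonal projection yields $r=\Pi^{\leq d}_M p=\Pi^{\leq d}_N p$, establishing $(\ref{E:PiMN})$.
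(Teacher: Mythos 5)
Your proof is correct and follows essentially the same route as the paper's: both factor the Gaussian inner product over the product measure $\gamma_M=\gamma_N\otimes\gamma_{M-N}$, test orthogonality against the monomial basis of $\mcp^{\leq d-1}_M$ using the degree bound $\alpha_1+\cdots+\alpha_N\leq d-1$, and conclude via the defining (uniqueness) property of the orthogonal projection. The only cosmetic difference is that you phrase it as verifying that $r=\Pi^{\leq d}_N p$ satisfies the characterization of $\Pi^{\leq d}_M p$, whereas the paper writes the same orthogonality identities directly.
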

   A consequence of this equality is that $  \Pi^{\leq d}_M p(X_1,\ldots, X_N)$ {\em is a polynomial in} $X_1,\ldots, X_N$, as seems natural.
  \begin{proof}
Consider any $p\in \mcp^{\leq  d}_N$, and any monomial $X_1^{j_1}\ldots X_M^{j_M}\in \mcp^{\leq d-1}_M$; then:
\begin{equation}
\begin{split}
&\la p(X_1,\ldots, X_N), X_1^{j_1}\ldots X_M^{j_M}\ra \\
 &=\la  p(X_1,\ldots, X_N), X_1^{j_1}\ldots X_N^{j_N}\ra \la 1, X_{N+1}^{j_{N+1}}\ldots X_M^{j_M}\ra,\end{split}
\end{equation}
where we have used the fact that the Gaussian measure $\gamma_{M}$ is the product of the standard Gaussian measure $\gamma_N$ and the standard Gaussian measure in the remaining $M-N$ variables:
\begin{equation}
\begin{split}
 \int_{\mbr^{N}\times\mbr^{M-N}} f(x)g(y)\,d\gamma_M(x, y)  & =\int_{\mbr^N}f\,d\gamma_N\int_{\mbr^{M-N}}g\,d\gamma_{M-N}
\end{split}
\end{equation}
Next we observe that
 \begin{equation}
 \la p(X_1,\ldots, X_N), X_1^{j_1}\ldots X_N^{j_N}\ra 
 =\la    \Pi^{\leq d}_N p(X_1,\ldots, X_N), X_1^{j_1}\ldots X_N^{j_N}\ra,\end{equation}
 by definition of the orthogonal projection $\Pi^{\leq d}_N$, keeping in mind that   $ X_1^{j_1}\ldots X_N^{j_N}$ has degree $\leq d-1$.  
 
 Using the product nature of $\gamma_M$ again we conclude that
\begin{equation}\label{E:pPiNd}
 \la p(X_1,\ldots, X_N), X_1^{j_1}\ldots X_M^{j_M}\ra =\la \Pi^{\leq d}_N p(X_1,\ldots, X_N), X_{1}^{j_{1}}\ldots X_M^{j_M}\ra, 
\end{equation}
for all monomials $ X_{1}^{j_{1}}\ldots X_M^{j_M}$ in $\mcp^{\leq d-1}_{M}$. On the right, $ \Pi^{\leq d}_N p(X_1,\ldots, X_N)$ is in $\mcp^{\leq d-1}_N$. 
Since the monomials in $\mcp^{\leq d-1}_M$ form a basis of this space, we conclude from (\ref{E:pPiNd}) that 
\begin{equation}
\Pi^{\leq d}_M p(X_1,\ldots, X_N)= \Pi^{\leq d}_N p(X_1,\ldots, X_N),
\end{equation}
which establishes (\ref{E:PiMN}). \end{proof}

Thus all the linear mappings $\Pi^{\leq d}_N$ combine to form one linear mapping
\begin{equation}
\Pi^{\leq d} :\mcp^{\leq d}\to\mcp^{\leq d-1}.
\end{equation}

\subsection{Hermite polynomials as orthogonal projections of monomials}
Now let 
\begin{equation}
\Pi^{\leq d}_\perp  =I-\Pi^{\leq d}.
\end{equation}
Then
\begin{equation}\label{E:PidNperp}
\Pi^{\leq d}_\perp|\mcp^{\leq d}_N \quad\hbox{is the orthogonal projection onto $\mcp^{\leq d}_N\ominus\mcp^{\leq d-1}_N$.}
\end{equation}
 
Focusing for the moment on just one variable $X$, the $m$-th Hermite polynomial $H_m(X)$ is $1$ if $m=0$ and is the projection of $X^m$ to the subspace orthogonal to $\mcp^{m-1}$ otherwise. Thus
\begin{equation}\label{E:HmX}
H_m(X)= \Pi^{\leq m}_{\perp}(X^m).
\end{equation}
We note that
$$X^m-H_m(X)$$
is the orthogonal projection of $X^m$ on the subspace of polynomials of degree $<m$, and so is of degree $<m$.

We have next the generalization of this observation to more than one variable:

\begin{prop}\label{P:go} For any monomial $X_1^{m_1}\ldots X_N^{m_N}$ with $m_1+\ldots +m_N=d$, we have
\begin{equation}\label{E:PidXH}
\Pi^{\leq d}_{\perp}(X_1^{m_1}\ldots X_N^{m_N})=H_{m_1}(X_1)\ldots H_{m_N}(X_N).
\end{equation}
 \end{prop}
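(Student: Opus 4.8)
The plan is to recognize the right-hand side $H_{m_1}(X_1)\cdots H_{m_N}(X_N)$ as precisely the component of the monomial lying in the orthogonal complement in the decomposition $\mcp^{\leq d}_N=\mcp^{\leq d-1}_N\oplus\bigl(\mcp^{\leq d}_N\ominus\mcp^{\leq d-1}_N\bigr)$, and then invoke (\ref{E:PidNperp}), which identifies $\Pi^{\leq d}_\perp$ on $\mcp^{\leq d}_N$ with the orthogonal projection onto that complement. By uniqueness of the orthogonal decomposition, it suffices to establish two facts: (a) the difference $X_1^{m_1}\cdots X_N^{m_N}-H_{m_1}(X_1)\cdots H_{m_N}(X_N)$ lies in $\mcp^{\leq d-1}_N$, and (b) the product $H_{m_1}(X_1)\cdots H_{m_N}(X_N)$ is orthogonal to all of $\mcp^{\leq d-1}_N$.

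Fact (a) is immediate from the one-variable description of $H_n$ as a monic polynomial of degree $n$: the product then has leading homogeneous part $X_1^{m_1}\cdots X_N^{m_N}$ of degree $d$, and subtracting the monomial cancels this top-degree part, leaving a polynomial of degree $<d$.

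The crux is fact (b). First I would use that the Gaussian measure $\gamma_N$ factors as a product of one-dimensional Gaussian measures, so that the inner product of two Hermite products factors as well:
\begin{equation}
\langle H_{m_1}(X_1)\cdots H_{m_N}(X_N),\,H_{\ell_1}(X_1)\cdots H_{\ell_N}(X_N)\rangle=\prod_{j=1}^N\langle H_{m_j}(X_j),H_{\ell_j}(X_j)\rangle.
\end{equation}
By the defining property of $H_n$ (the component of $X^n$ orthogonal to $1,X,\ldots,X^{n-1}$), each one-variable factor vanishes whenever $m_j\neq\ell_j$; hence the whole product vanishes unless $(\ell_1,\ldots,\ell_N)=(m_1,\ldots,m_N)$, so the Hermite products form an orthogonal family. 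To finish, I would observe that an arbitrary monomial $X_1^{n_1}\cdots X_N^{n_N}$ of total degree $e<d$ expands, via the one-variable identity $X^n=\sum_{\ell\le n}c_{n,\ell}H_\ell(X)$, into a linear combination of Hermite products $H_{\ell_1}(X_1)\cdots H_{\ell_N}(X_N)$ with $\ell_1+\cdots+\ell_N\le e<d$. Since $\sum_j m_j=d>\sum_j\ell_j$ for every such term, none of these multi-indices equals $(m_1,\ldots,m_N)$, and by the orthogonality just established each term is orthogonal to $H_{m_1}(X_1)\cdots H_{m_N}(X_N)$. As the monomials of degree $\le d-1$ span $\mcp^{\leq d-1}_N$, fact (b) follows.

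The main obstacle is keeping straight the two gradings in play: the one-variable Hermite expansion controls the per-variable degrees $\ell_j$, whereas orthogonality must be read off from the total degree. The resolution is exactly the inequality $\sum_j\ell_j\le e<d=\sum_j m_j$, which forces the multi-indices to differ even though an individual $\ell_j$ may exceed the corresponding $m_j$. Combining (a) and (b) with (\ref{E:PidNperp}) and the uniqueness of the orthogonal projection yields (\ref{E:PidXH}).
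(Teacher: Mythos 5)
Your proposal is correct and follows essentially the same route as the paper: both arguments reduce the claim to showing that the monomial differs from $H_{m_1}(X_1)\cdots H_{m_N}(X_N)$ by an element of $\mcp^{\leq d-1}_N$ and that this Hermite product is orthogonal to $\mcp^{\leq d-1}_N$, using the product structure of the Gaussian inner product and the defining property of the one-variable $H_n$. The only divergence is in a sub-step: the paper checks the orthogonality by noting that any monomial of total degree $<d$ must satisfy $j_k<m_k$ in at least one variable, so one factor of the factored inner product vanishes, whereas you expand lower-degree monomials in the orthogonal family of Hermite products and compare total degrees --- an equally valid, slightly longer verification.
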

\begin{proof} Writing $X_1^{m_1}\ldots X_N^{m_N}$ as
\begin{equation}
\begin{split}
&X_1^{m_1}\ldots X_N^{m_N}\\
&=\bigl(X_1^{m_1}-H_{m_1}(X_1)\bigr)X_2^{m_2}\ldots X_N^{m_N}+H_{m_1}(X_1)X_2^{m_2}\ldots X_N^{m_N},
\end{split}
\end{equation}
we observe  that the first term on the right hand side is of total degree $<d$, because $X_1^{m_1}-H_{m_1}(X_1)$ is of degree $m_1-1$ in $X_1$. Therefore, applying the projection $\Pi^d_{\perp}$, we have
\begin{equation}
\Pi^{\leq d}_{\perp}\bigl(X_1^{m_1}\ldots X_N^{m_N}\bigr)= 
\Pi^{\leq d}_{\perp}\bigl(H_{m_1}(X_1)X_2^{m_2}\ldots X_N^{m_N}\bigr).
\end{equation}
Repeating this argument with $X_2, X_3, ...$, we obtain
\begin{equation}\label{E:repeat}
\Pi^{\leq d}_{\perp}\bigl(X_1^{m_1}\ldots X_N^{m_N}\bigr)= 
\Pi^{\leq d}_{\perp}\bigl(H_{m_1}(X_1)\ldots H_{m_N}(X_N)\bigr).
\end{equation}
Now we check that the polynomial $H_{m_1}(X_1)\ldots H_{m_N}(X_N)$ is orthogonal to all polynomials of total degree $<d$: if $X_1^{j_1}\ldots X_N^{j_N}$ is a monomial of total degree $<d$ then $j_k<m_k$ for at least one $k$, and so
\begin{equation}
\begin{split}
&\la H_{m_1}(X_1)\ldots H_{m_N}(X_N), X_1^{j_1}\ldots X_N^{j_N}\ra\\
&=\la H_{m_1}(X_1), X_1^{j_1}\ra\ldots  \la H_{m_N}(X_N), X_N^{j_N}\ra\\
&=0\quad\hbox{because $\la H_{m_k}(X_k), X_k^{j_k}\ra=0$.}
\end{split}
\end{equation}
Hence
$$\Pi^{\leq d}_{\perp}\bigl(H_{m_1}(X_1)\ldots H_{m_N}(X_N)\bigr)=H_{m_1}(X_1)\ldots H_{m_N}(X_N),$$
and so, by (\ref{E:repeat}), the result (\ref{E:PidXH}) follows.
\end{proof}

\subsection{The limiting orthogonal projection}\label{ss:proj}
We turn now to look at orthogonal projections associated to a sequence of inner-products. We will apply the following result to the case of inner-products given by integration over $S^{N-1}(\sqrt{N})$.

\begin{prop}\label{P:projl}
Let $V$ be a finite-dimensional vector space, and $\la\cdot,\cdot\ra_n$ an inner-product on $V$ for each $n\in\{1,2,3,\ldots\}$, and suppose that there is an inner-product $\la\cdot,\cdot\ra$ on $V$ which is the limit of the sequence of inner-products $\la\cdot,\cdot\ra_n$. Let $P_n:V\to V$ be the orthogonal projection onto a subspace $W\subset V$ relative to the inner-product $\la\cdot,\cdot\ra_n$. Then $P_nv\to Pv$, as $n\to\infty$, for all $v\in V$, where $P$ is the orthogonal projection onto $W$ with respect to the inner-product $\la\cdot,\cdot\ra$.
\end{prop}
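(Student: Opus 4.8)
The plan is to fix a basis of $V$ and encode each inner-product as a positive-definite Gram matrix, then show that the orthogonal projection onto $W$ depends continuously on this matrix, so that convergence of the inner-products forces convergence of the projections. Concretely, choose a basis $e_1,\ldots,e_D$ of $V$ and let $G_n$ be the matrix with entries $\la e_i,e_j\ra_n$, and $G$ the matrix with entries $\la e_i,e_j\ra$. The hypothesis that $\la\cdot,\cdot\ra_n\to\la\cdot,\cdot\ra$ means precisely that $G_n\to G$ entrywise. Since $\la\cdot,\cdot\ra$ is an inner-product, $G$ is invertible (positive-definite), and by continuity of the determinant, $G_n$ is invertible for all large $n$ with $G_n^{-1}\to G^{-1}$.

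Next I would give a basis-free characterization of the orthogonal projection $P_n$ that makes the dependence on $G_n$ explicit. Fix a basis $w_1,\ldots,w_r$ of the subspace $W$. For any $v\in V$, the vector $P_nv$ is the unique element of $W$ satisfying $\la P_nv - v, w_k\ra_n=0$ for $k=1,\ldots,r$. Writing $P_nv=\sum_l c_l^{(n)} w_l$, this is the linear system $\sum_l c_l^{(n)}\la w_l,w_k\ra_n = \la v,w_k\ra_n$, that is, $A_n c^{(n)} = b_n(v)$, where $A_n$ is the $r\times r$ Gram matrix of $W$ relative to $\la\cdot,\cdot\ra_n$ and $b_n(v)$ is the vector of pairings $\la v,w_k\ra_n$. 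The matrix $A_n$ is invertible because $\la\cdot,\cdot\ra_n$ restricts to an inner-product on $W$, and since the restriction of $\la\cdot,\cdot\ra$ to $W$ is likewise an inner-product, the limiting matrix $A$ is invertible as well.

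The convergence then follows from continuity of matrix inversion. The entries of $A_n$ are the numbers $\la w_l,w_k\ra_n$, which converge to $\la w_l,w_k\ra$ as $n\to\infty$; hence $A_n\to A$, and since $A$ is invertible we get $A_n^{-1}\to A^{-1}$. Similarly each coordinate of $b_n(v)$ converges to the corresponding coordinate of $b(v)$. Therefore the solution $c^{(n)}=A_n^{-1}b_n(v)$ converges to $c=A^{-1}b(v)$, which are exactly the coordinates of $Pv$ in the basis $w_1,\ldots,w_r$. Thus $P_nv=\sum_l c_l^{(n)}w_l \to \sum_l c_l w_l = Pv$, as required.

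The only point demanding care, which is the crux of the argument, is the invertibility of the limiting Gram matrix $A$ of $W$: without it, $A_n^{-1}$ could blow up and the limit would fail. This is guaranteed precisely by the hypothesis that $\la\cdot,\cdot\ra$ is a genuine inner-product, so its restriction to the subspace $W$ is nondegenerate and positive-definite. I would make this the pivotal observation of the proof and note that it is what rules out the degeneration of the projections in the limit.
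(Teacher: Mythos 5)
Your proof is correct and takes essentially the same route as the paper's: expand $P_nv$ in a fixed basis $w_1,\ldots,w_r$ of $W$, observe that the coefficients solve the Gram-matrix linear system $A_n c^{(n)}=b_n(v)$, and pass to the limit using continuity of matrix inversion and multiplication. Your explicit remark that the limiting Gram matrix $A$ is invertible precisely because $\la\cdot,\cdot\ra$ is a genuine inner-product on $V$ (hence nondegenerate on $W$) is the one hypothesis-use the paper leaves implicit, and it is worth stating.
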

\begin{proof} Let $w_1,\ldots, w_k$ form a basis of $W$. Fix $v\in V$. Then, writing
\begin{equation}\label{E:Pnv}
P_nv=\sum_{j=1}^kc_j(n)w_j,
\end{equation}
we have
\begin{equation}\label{E:Pnv2}
\la w_i, v\ra_n=\la w_i, P_nv\ra_n=\sum_{j=1}^k\la w_i, w_j\ra_n c_j(n).
\end{equation}
Hence the vector $c(n)$ whose components are $c_1(n), \ldots, c_k(n)$ is given by matrix inversion:
\begin{equation}
c(n)=\left[\begin{matrix}\la w_1, w_1\ra_n &\la w_1, w_2\ra_n &\ldots &\la w_1, w_k\ra_n\\
\la w_2, w_1\ra_n &\la w_2, w_2\ra_n &\ldots &\la w_2, w_k\ra_n\\
\vdots & \vdots & \vdots & \vdots   \\
\la w_k, w_1\ra_n &\la w_k, w_2\ra_n &\ldots &\la w_k, w_k\ra_n
\end{matrix}\right]^{-1}\left[\begin{matrix}\la w_1, v\ra_n \\  \la w_2, v\ra_n \\  \vdots\\  \la w_k, v\ra_n \end{matrix}\right].
\end{equation}
Now we let $n\to\infty$ and using continuity of matrix inversion and matrix multiplication we obtain:
\begin{equation}
c\stackrel{\rm def}{=}\lim_{n\to\infty} c(n)=\left[\begin{matrix}\la w_1, w_1\ra &\la w_1, w_2\ra  &\ldots &\la w_1, w_k\ra \\
\la w_2, w_1\ra  &\la w_2, w_2\ra  &\ldots &\la w_2, w_k\ra \\
\vdots & \vdots & \vdots & \vdots   \\
\la w_k, w_1\ra  &\la w_k, w_2\ra  &\ldots &\la w_k, w_k\ra 
\end{matrix}\right]^{-1}\left[\begin{matrix}\la w_1, v\ra  \\  \la w_2, v\ra  \\  \vdots\\  \la w_k, v\ra  \end{matrix}\right].
\end{equation}
Then, multiplying by the matrix with entries $\la w_i, w_j\ra$, we have:
\begin{equation}\label{E:cwp}
\sum_{j=1}^k\la w_i, w_j\ra c_j =\la w_i, v\ra,
\end{equation}
and this means that $Pv=\sum_{j=1}^k c_jw_j$. Looking back at (\ref{E:Pnv}) we conclude that $\lim_{n\to\infty}P_nv=Pv$.
\end{proof}

\subsection{Hermite polynomials from monomials on spheres}\label{ss:herm}

Recall that $\mcp^{\leq d}_k$ is the vector space of all polynomials of degree $\leq d$ in the variables $X_1,\ldots, X_k$.

\begin{prop}\label{P:limPiN}Let 
\begin{equation}\label{E:defdotPikN}
{\dot\Pi}^{\leq d}_{k,N}: \mcp^{\leq d}_k\to\mcp^{\leq d-1}_{k}
\end{equation}
  be the orthogonal projection using the inner-product $ \la \cdot, \cdot\ra_N=\la\cdot,\cdot\ra_{L^2(S^{N-1}(\sqrt{N}), \ovs), }$. Then
\begin{equation} \lim_{N\to\infty}(I-{\dot\Pi}^{\leq d}_{k,N})(X_1^{j_1}\ldots X_k^{j_k})  = H_{j_1}(X_1)\ldots H_{j_k}(X_k).
\end{equation}
\end{prop}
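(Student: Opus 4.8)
The plan is to recognize $I-{\dot\Pi}^{\leq d}_{k,N}$ as an orthogonal projection, then feed the Gaussian limit of the spherical inner-products into the abstract convergence result. Set $V=\mcp^{\leq d}_k$ and $W=\mcp^{\leq d-1}_k$. By definition ${\dot\Pi}^{\leq d}_{k,N}$ is the orthogonal projection of $V$ onto $W$ relative to $\la\cdot,\cdot\ra_N$, so $I-{\dot\Pi}^{\leq d}_{k,N}$ is the orthogonal projection onto the $\la\cdot,\cdot\ra_N$-orthogonal complement of $W$ inside $V$. We may assume the monomial $X_1^{j_1}\ldots X_k^{j_k}$ has total degree exactly $d=j_1+\ldots+j_k$, since otherwise it already lies in $W$ and both sides of the asserted identity reduce to a lower-degree statement.

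First I would check the hypotheses of Proposition \ref{P:projl}. The space $V$ is finite-dimensional, and for each $N>k$ the pairing $\la\cdot,\cdot\ra_N$ is a genuine inner-product on $V$ by Lemma \ref{L:zerof}. By Theorem \ref{T:ipNinfty}, for every pair $p,q\in\mcp_k$ we have $\la p,q\ra_N\to\la p,q\ra_{L^2(\mbr^\infty,\mu)}$ as $N\to\infty$; restricted to $V$ this says precisely that the sequence of inner-products $\la\cdot,\cdot\ra_N$ converges (entrywise on any fixed basis, i.e.\ in the sense of Proposition \ref{P:projl}) to the Gaussian inner-product $\la\cdot,\cdot\ra_{L^2(\mbr^\infty,\mu)}$, which is itself an inner-product on $V$. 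The passage from the sequence index $n$ of Proposition \ref{P:projl} to the index $N>k$ here is a harmless reindexing.

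Applying Proposition \ref{P:projl} with these data gives ${\dot\Pi}^{\leq d}_{k,N}v\to Pv$ for every $v\in V$, where $P$ is the orthogonal projection of $V$ onto $W$ with respect to the Gaussian inner-product. Consequently $(I-{\dot\Pi}^{\leq d}_{k,N})v\to(I-P)v$, and $I-P$ is the Gaussian orthogonal projection onto $\mcp^{\leq d}_k\ominus\mcp^{\leq d-1}_k$. For the top-degree monomial $v=X_1^{j_1}\ldots X_k^{j_k}$ the image $(I-P)v$ is exactly the component of $v$ that is $L^2(\mu)$-orthogonal to every polynomial of degree $<d$; by Lemma \ref{L:PidNM} this Gaussian projection is insensitive to the number of ambient variables, so it coincides with $\Pi^{\leq d}_\perp v$ in the notation of (\ref{E:PidNperp}). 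By Proposition \ref{P:go}, equation (\ref{E:PidXH}), $\Pi^{\leq d}_\perp(X_1^{j_1}\ldots X_k^{j_k})=H_{j_1}(X_1)\ldots H_{j_k}(X_k)$, which is the desired limit.

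The main obstacle is not a hard estimate but the correct bookkeeping at two junctures: confirming that the convergence furnished by Theorem \ref{T:ipNinfty} is literally the convergence of Gram matrices on a fixed basis that Proposition \ref{P:projl} consumes, and identifying $(I-P)$ applied to a top-degree monomial with the Hermite projection $\Pi^{\leq d}_\perp$ of Proposition \ref{P:go} (which in turn relies on the dimension-independence supplied by Lemma \ref{L:PidNM}). Once these identifications are in place the result is immediate.
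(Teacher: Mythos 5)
Your proposal is correct and follows essentially the same route as the paper's own proof: Theorem \ref{T:ipNinfty} supplies convergence of the inner-products $\la\cdot,\cdot\ra_N$ to the Gaussian one, Proposition \ref{P:projl} converts this into convergence of the orthogonal projections ${\dot\Pi}^{\leq d}_{k,N}$ to the Gaussian projection, and Proposition \ref{P:go} identifies $(I-\Pi^{\leq d})$ applied to the monomial with the product of Hermite polynomials. Your additional bookkeeping (invoking Lemma \ref{L:zerof} for positive-definiteness, Lemma \ref{L:PidNM} for dimension-independence, and noting that the statement implicitly takes $d=j_1+\ldots+j_k$) only makes explicit what the paper leaves tacit.
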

\begin{proof}  By Theorem  \ref{T:ipNinfty}   the limit of the inner-product $\la\cdot,\cdot\ra_N$ as $N\to\infty$ is the Gaussian inner-product. Then by Proposition  \ref{P:projl} we have the limit of the projections:
\begin{equation}
\lim_{N\to\infty}{\dot\Pi}^{\leq d}_{k,N}(X_1^{j_1}\ldots X_k^{j_k})=\Pi^{\leq d}(X_1^{j_1}\ldots X_k^{j_k}).
\end{equation}
Hence
\begin{equation}
\begin{split}
\lim_{N\to\infty}(I-{\dot\Pi}^{\leq d}_{k,N})(X_1^{j_1}\ldots X_k^{j_k})&=(I-\Pi^{\leq d})(X_1^{j_1}\ldots X_k^{j_k})\\
&=\Pi^{\leq d}_{\perp}(X_1^{j_1}\ldots X_k^{j_k}).
\end{split}
\end{equation} 
Then by Proposition \ref{P:go}, we have:
\begin{equation}\label{E:Piherm}
\begin{split}
\lim_{N\to\infty}(I-{\dot\Pi}^{\leq d}_{k,N})(X_1^{j_1}\ldots X_k^{j_k}) &=  
  H_{j_1}(X_1)\ldots H_{j_k}(X_k).
\end{split}
\end{equation}
\end{proof}

\subsection{Hermite polynomials from zonal harmonics}\label{ss:hermzon}  Consider the special case $k=1$ in (\ref{E:Piherm}).  Recall from (\ref{E:defdotPiN}) that the polynomial $q_m(X_1)$, giving rise to a zonal harmonic,  is the orthogonal projection onto the orthogonal complement of the subspace spanned by the monomials $1, X_1, X_1^2, \ldots, X_1^{m-1}$. Thus  (\ref{E:Piherm}) specializes to:
\begin{equation}
\lim_{N\to\infty}q_m(\sqrt{N}; X)= H_m(X).
\end{equation}
Using (\ref{E:qmaXGgn}) we can write this as a limiting relationship between Gegenbauer polynomials and Hermite polynomials:

 \begin{equation}\label{E:GgnHerm}
\lim_{N\to\infty}C^{\left((N-2)/2\right)}_m(X/\sqrt{N})= H_m(X).
\end{equation}

\section{Limit of the Spherical Laplacian}\label{s:lapl}

In this section we introduce the spherical Laplacian $\Delta_{S^{N-1}(a)}$ using a special algebraic property of $\norm{\mathbf M}^2$ and determine the limiting behavior of $\Delta_{S^{N-1}}(\sqrt{N})$ as $N\to\infty$.

\subsection{The Laplacian for $S^{N-1}(a)$}\label{ss:sphlap} The spherical Laplacian corresponding to the sphere $S^{N-1}(a)$  should emerge from the simple scaling
$$\frac{1}{a^2}\sum_{\{j,k\}\in P_2(N) }M_{jk}^2.$$
However, we will refine this notion before formulating   a definition. We define, for $p\in  \mbc[X_1,\ldots, X_n]$,
\begin{equation}\label{E:sphlapnNa}
 \Delta_{S^{N-1} (a)}p= \frac{1}{a^2}\sum_{\{j,k\}\in P_2(N) }M_{jk}^2p\in \mbc[X_1,\ldots, X_n]\qquad\hbox{if $n\geq N$.}
 \end{equation}
Now let $N>n$; then
$$q= \frac{1}{a^2}\sum_{\{j,k\}\in P_2(N) }M_{jk}^2p\in \mbc[X_1,\ldots, X_N].$$
We will show in Proposition \ref{P:Mjkfmin}  that $q$ is equal, modulo $\mcz_N(a)$, to a unique polynomial  $q_{\rm min}$ in $X_1,\ldots, X_n$, and we define
 \begin{equation}\label{E:sphlapnNa2}
 \Delta_{S^{N-1} (a)}p=q_{\rm min} \in \mbc[X_1,\ldots, X_n]\qquad\hbox{if $N>n$.}
   \end{equation}
   Linearity of $\sum_{\{j,k\}\in P_2(N) }M_{jk}^2p$ in $p$ and the uniqueness of $q_{\rm min}$ implies that $\Delta_{S^{N-1}(a)}$ is a linear operator.
   
   \begin{prop}\label{P:Deltaspherelinsa} The mapping $p\mapsto  \Delta_{S^{N-1} (a)}p$ specifies a linear operator on each finite-dimensional space $\mcp^{\leq d}_n$ of polynomials in $X_1,\ldots, X_n$ of total degree $\leq d$.  Moreover, this operator is self-adjoint with respect to the inner-product $\la\cdot,\cdot\ra_{a,N}$ on $\mcp^{\leq d}_n$, for $1\leq n <N$, given in (\ref{E:ippq}).   \end{prop}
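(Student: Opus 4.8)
The plan is to treat the two assertions separately — first that $\Delta_{S^{N-1}(a)}$ is a well-defined linear endomorphism of $\mcp^{\leq d}_n$, and then that it is self-adjoint — reducing both, in the end, to the identity (\ref{E:x2D}) together with the self-adjointness of $\norm{\mathbf M}^2$ from Proposition \ref{P:adj}. Throughout, $\la\cdot,\cdot\ra_{a,N}$ is a genuine inner product on $\mcp^{\leq d}_n$ for $n<N$ by Lemma \ref{L:zerof}, so ``self-adjoint'' makes sense.

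For the first assertion I would begin by producing an explicit formula for the operator. Fix $p\in\mcp^{\leq d}_n$, viewed inside $\mcp_N$. Because $p$ involves only $X_1,\ldots,X_n$, we have $\Delta_N p=\Delta_n p$ and $r\partial_r p=\sum_{j=1}^n X_j\partial_j p$, so the identity (\ref{E:x2D}) rearranges to
\begin{equation*}
\norm{\mathbf M}^2 p=\norm{X}^2\Delta_n p-\bigl[(r\partial_r)^2+(N-2)r\partial_r\bigr]p.
\end{equation*}
Reducing modulo $\mcz_N(a)$, where $\norm{X}^2\equiv a^2$, gives
\begin{equation*}
\frac{1}{a^2}\norm{\mathbf M}^2 p\equiv \Delta_n p-\frac{1}{a^2}\bigl[(r\partial_r)^2+(N-2)r\partial_r\bigr]p\pmod{\mcz_N(a)}.
\end{equation*}
The right-hand side lies in $\mcp_n$, and since $\Delta_n$ lowers degree while $r\partial_r$ preserves each homogeneous component by the Euler relation (\ref{E:euler}), it has total degree $\leq d$. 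By the uniqueness clause of Proposition \ref{P:Mjkfmin} it must therefore be $q_{\mathrm{min}}$, so that
\begin{equation*}
\Delta_{S^{N-1}(a)}p=\Delta_n p-\frac{1}{a^2}\bigl[(r\partial_r)^2+(N-2)r\partial_r\bigr]p\in\mcp^{\leq d}_n .
\end{equation*}
This formula is manifestly linear in $p$ and carries $\mcp^{\leq d}_n$ into itself, settling the first assertion.

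For self-adjointness the key observation is that $\Delta_{S^{N-1}(a)}p=q_{\mathrm{min}}$ differs from $\frac{1}{a^2}\norm{\mathbf M}^2 p$ by an element of $\mcz_N(a)$, which vanishes identically on $S^{N-1}(a)$; hence the two agree as functions on the sphere and
\begin{equation*}
\la \Delta_{S^{N-1}(a)}p,q\ra_{a,N}=\frac{1}{a^2}\la \norm{\mathbf M}^2 p,q\ra_{a,N}
\end{equation*}
for all $p,q\in\mcp^{\leq d}_n$. The hermitian relation for $\norm{\mathbf M}^2$ furnished by Proposition \ref{P:adj} (equation (\ref{E:DSadj}), using that $\norm{\mathbf M}^2$ has real coefficients) gives $\la \norm{\mathbf M}^2 p,q\ra_{a,N}=\la p,\norm{\mathbf M}^2 q\ra_{a,N}$, and applying the same ``agree on the sphere'' observation to $q$ converts the right-hand side back into $\la p,\Delta_{S^{N-1}(a)}q\ra_{a,N}$, as required.

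The routine parts — the degree counting, the linearity, and the conjugation bookkeeping in the sesquilinear pairing — are straightforward. The one genuinely delicate point, and the step I would watch most carefully, is keeping the two notions of equivalence distinct: the operator is \emph{defined} by equality modulo $\mcz_N(a)$ (which is what Proposition \ref{P:Mjkfmin} controls, and what makes $\Delta_{S^{N-1}(a)}p$ a bona fide polynomial in $X_1,\ldots,X_n$ of degree $\leq d$), whereas the self-adjointness computation uses only the weaker fact that elements of $\mcz_N(a)$ vanish on $S^{N-1}(a)$. Conflating these, or overlooking that the identity (\ref{E:x2D}) is precisely what forces $q_{\mathrm{min}}$ to have degree $\leq d$ rather than merely to lie in $\mcp_n$, is the easy way to go astray.
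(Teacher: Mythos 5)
Your proof is correct, and its two halves compare differently with the paper's. The self-adjointness half is exactly the paper's own argument: the published proof consists of precisely your two ingredients, namely the hermitian relation (\ref{E:skewherm}) for $\norm{\mathbf M}^2$ from Proposition \ref{P:adj} together with the observation that elements of $\mcz_N(a)$ vanish pointwise on $S^{N-1}(a)$, so that replacing $\frac{1}{a^2}\norm{\mathbf M}^2p$ by its reduced representative does not change any of the inner products. Where you genuinely diverge is the well-definedness and linearity half. The paper handles this abstractly: linearity of $p\mapsto\sum_{\{j,k\}\in P_2(N)}M_{jk}^2p$ combined with the uniqueness of $q_{\rm min}$ (Proposition \ref{P:reprsnt}), with existence supplied by the inductive, monomial-by-monomial argument of Proposition \ref{P:Mjkfmin}; the degree bound $\leq d$ is left implicit in that induction. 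You instead extract the closed formula $\Delta_{S^{N-1}(a)}p=\Delta_np-\frac{1}{a^2}\bigl[(r\partial_r)^2+(N-2)r\partial_r\bigr]p$ directly from the Casimir identity (\ref{E:x2D}), invoking uniqueness only to identify the right-hand side with $q_{\rm min}$. This route proves existence without the induction, makes linearity and the stability of $\mcp^{\leq d}_n$ manifest, and is in effect a rigorous polynomial-level version of the referee's remark recorded after Proposition \ref{P:Mjkfmin}: the paper notes there that (\ref{E:x2Dintro}) gives agreement as \emph{functions} on the sphere, and your appeal to Proposition \ref{P:reprsnt} is exactly what upgrades this to the identity of \emph{polynomials} that the definition requires — a distinction you correctly flag as the delicate point. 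The paper's proof is shorter because Proposition \ref{P:Mjkfmin} is already in hand; yours is more self-contained and produces an explicit operator formula (consistent, one can check, with (\ref{E:DSphNX1n})) that also makes the computations of subsection \ref{ss:sphlapmon} transparent. Your bookkeeping on the side conditions — that the reduction step and the inner-product statement both require $n<N$, while for $n\geq N$ the operator is given directly by (\ref{E:sphlapnNa}), and that the conjugation in the sesquilinear pairing is harmless since $\norm{\mathbf M}^2$ has real coefficients — is sound.
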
   
  
  \begin{proof}
  We have already explained linearity of $\Delta_{S^{N-1} (a)}$. Self-adjointness follows from the relation (\ref{E:skewherm}) along with the fact that any polynomial in $\mcz_N(a)$ vanishes pointwise as a function on $S^{N-1}(a)$.
  \end{proof}

\subsection{$ \Delta_{S^{N-1} (\sqrt{N})}$ on monomials}\label{ss:sphlapmon}  From (\ref{E:M1j2X2}) we have 
\begin{equation}\label{E:laplx1}
\begin{split}
 \sum_{\{j,k\}\in P_2(N)}M_{jk}^2(X_1^m)& = -m(N-1)X_1^m+  m(m-1)(N-X_1^2)X_1^{m-2}\\
&\qquad\hbox{modulo $\mcz_N(\sqrt{N})$}.
\end{split}
\end{equation}  
Hence
\begin{equation}\label{E:DSphNX1n}
 \Delta_{S^{N-1} (\sqrt{N})}(X_1^m) = -m\left(1-\frac{1}{N}\right)X_1^m +m(m-1)\left(1-\frac{X_1^2}{N}\right)X_1^{m-2}.
 \end{equation}
 
 \subsection{$ \Delta_{S^{N-1} (\sqrt{N})}$ on products}\label{ss:sphlapprod} 
Now consider a polynomial $f\in \mbc[X_1,\ldots, X_n]$ of the form
\begin{equation}
f  =pq,
\end{equation}
where $p\in\mbc[X_1,\ldots, X_m] $ and $q\in\mbc[X_{m+1},\ldots, X_n] $. Then, since $M_{jk}$ is a first order differential operator, we have
\begin{equation}\label{E:Mjksqrdf}
\begin{split}
  M_{jk}^2f  
&=\bigl(M_{jk}^2p\bigr)q   +2M_{jk}p M_{jk}q  +p M_{jk}^2q.
\end{split}
\end{equation}
The second term on the right hand side is zero unless either $(j,k)$ or $(k,j)$ lies in $\{1,\ldots, m\}\times\{m+1,\ldots, n\}$. So we have the sum
\begin{equation}
\begin{split}
  \sum_{\{j,k\}\in\mcp_2(N)} M_{jk}p M_{jk}q  
&=\sum_{j=1}^m\sum_{k=m+1}^n\bigl(-X_k\partial_jp\bigr)\bigl(X_j\partial_kq \bigr)\\
&=-\sum_{j=1}^nX_j\partial_jp \sum_{k=m+1}^nX_k\partial_kq \\
&=-d_pd_qpq,
\end{split}
\end{equation}
if $p$ is homogeneous of degree $d_p$ and $q$ is homogeneous of degree $d_q$. Thus, working with the case where $p$ and $q$ are both {\em monomials} (and hence homogeneous), we have
\begin{equation}\label{E:Mjksqrdfpqd}
\begin{split}
&   \sum_{\{j,k\}\in\mcp_2(N)}  M_{jk}^2f  \\
&=\left[  \sum_{\{j,k\}\in\mcp_2(N)} M_{jk}^2p\right] q   -2 d_pd_q f   +p \left[   \sum_{\{j,k\}\in\mcp_2(N)} M_{jk}^2q \right].
\end{split}
\end{equation}

\begin{prop}\label{P:Mjkfmin} Let $f\in \mbc[X_1,\ldots, X_n]$,  $N>n$ and $a>0$. Then $  \sum_{\{j,k\}\in\mcp_2(N)}  M_{jk}^2f$ is equivalent, modulo $\mcz_N(a)$, to a unique polynomial in $\mbc[X_1,\ldots, X_n]$.
\end{prop}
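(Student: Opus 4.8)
The plan is to prove existence of a representative in $\mbc[X_1,\ldots,X_n]$ by exploiting the Casimir identity (\ref{E:x2D}), and then to read off uniqueness directly from Proposition \ref{P:reprsnt}. For existence, I would first rewrite $\norm{\mathbf M}^2$ as an operator on $\mcp_N$ by solving (\ref{E:x2D}) for the Casimir, namely
\[
\norm{\mathbf M}^2 = \norm{X}^2\Delta_N - (r\partial_r)^2 - (N-2)\,r\partial_r .
\]
The key point is that each term on the right is harmless in the ambient variables $X_{n+1},\ldots,X_N$ once applied to $f\in\mbc[X_1,\ldots,X_n]$.

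Concretely, since $f$ does not involve $X_{n+1},\ldots,X_N$ we have $\Delta_N f=\sum_{j=1}^n\partial_j^2 f$ and $r\partial_r f=\sum_{j=1}^n X_j\partial_j f$, both polynomials in $X_1,\ldots,X_n$; and because $r\partial_r f$ again lies in $\mbc[X_1,\ldots,X_n]$, so does $(r\partial_r)^2 f$. The only factor carrying the extra variables is the multiplier $\norm{X}^2$ appearing in $\norm{X}^2\Delta_N f$. Reducing this multiplier to the constant $a^2$ modulo $\mcz_N(a)$ — which is legitimate because $\norm{X}^2-a^2$ generates the ideal $\mcz_N(a)$ — I obtain
\[
\norm{\mathbf M}^2 f \equiv a^2\,\Delta_N f - (r\partial_r)^2 f - (N-2)\,r\partial_r f \pmod{\mcz_N(a)},
\]
and the right-hand side is manifestly a polynomial in $X_1,\ldots,X_n$. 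This settles existence.

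Uniqueness is then immediate from Proposition \ref{P:reprsnt}: since $n<N$, any two representatives of the coset lying in $\mbc[X_1,\ldots,X_n]$ differ by an element of $\mcz_N(a)$ that is a polynomial in fewer than $N$ variables, and the argument given there shows no nonzero multiple of $\norm{X}^2-a^2$ can be such a polynomial; hence the two representatives coincide. The only step requiring genuine care is the bookkeeping in the middle paragraph — specifically, recognizing that $\Delta_N f$ is already free of $X_{n+1},\ldots,X_N$ \emph{before} the multiplier $\norm{X}^2$ is reduced to $a^2$, and that the second application of $r\partial_r$ does not reintroduce the ambient variables. If one prefers to avoid the operator identity, there is a direct alternative: split $\sum_{\{j,k\}\in P_2(N)}M_{jk}^2 f$ into pairs with both indices $\le n$ (which preserve $\mbc[X_1,\ldots,X_n]$), pairs with both indices $>n$ (which annihilate $f$), and the mixed pairs, for which a one-line computation gives $M_{jk}^2 f=-X_j\partial_j f + X_k^2\partial_j^2 f$ when $j\le n<k$; summing the mixed pairs and using $\sum_{k>n}X_k^2\equiv a^2-\sum_{i\le n}X_i^2\pmod{\mcz_N(a)}$ yields the same reduced polynomial.
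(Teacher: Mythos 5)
Your proof is correct, but it takes a genuinely different route from the paper's. The paper proves existence by reduction to monomials: it first handles pure powers $X_i^m$ via the explicit computation (\ref{E:laplx1}) (i.e.\ (\ref{E:M1j2X2})), and then inducts on the factors of a general monomial using the first-order product rule (\ref{E:Mjksqrdf}), taking $p=X_i^{n_i}$ and $q=f/p$. You instead solve the Casimir identity (\ref{E:x2D}) for $\norm{\mathbf M}^2$ and observe that, applied to $f\in\mbc[X_1,\ldots,X_n]$, the terms $\Delta_N f$, $r\partial_r f$, $(r\partial_r)^2 f$ never leave $\mbc[X_1,\ldots,X_n]$, while the sole offending multiplier $\norm{X}^2$ reduces to $a^2$ modulo the ideal since $\norm{X}^2-a^2$ generates $\mcz_N(a)$; existence follows in one stroke. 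This is essentially the observation the paper credits to a referee in the remark following the proposition (there phrased as equality of functions on $S^{N-1}(a)$; your version is the cleaner algebraic congruence mod $\mcz_N(a)$, which is what the definition of $\Delta_{S^{N-1}(a)}$ actually requires). What your route buys is brevity and a structural explanation of why the representative exists; what the paper's route buys is that its intermediate formulas are not throwaway — the action on $X_1^m$ and the product identity are reused verbatim to derive (\ref{E:DSphNX1n}) and (\ref{E:Deltpq}), which drive the limit computation in Proposition \ref{P:limspjLapl}. Your uniqueness argument via Proposition \ref{P:reprsnt} (no nonzero multiple of $\norm{X}^2-a^2$ lies in fewer than $N$ variables) is exactly the paper's. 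Your alternative pair-splitting computation at the end is also correct: the mixed-pair formula $M_{jk}^2f=-X_j\partial_j f+X_k^2\partial_j^2 f$ for $j\le n<k$ checks out, and summing over $k>n$ with $\sum_{k>n}X_k^2\equiv a^2-\sum_{i\le n}X_i^2$ recovers the same reduced polynomial.
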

\begin{proof} The uniqueness statement follows from Proposition \ref{P:reprsnt}. To prove existence it will suffice to assume that $f$ is a monomial because the general polynomial $f$ is a linear combination of monomials. If $f$ is of the form $X_i^{n}$ then (\ref{E:laplx1}), { applied to $X_i$ instead of $X_1$},  shows that $  \sum_{\{j,k\}\in\mcp_2(N)}  M_{jk}^2f$ is equivalent, modulo $\mcz_N(a)$, to a   polynomial in $\mbc[X_i]$. Finally, the general statement follows inductively from (\ref{E:Mjksqrdf}),by taking $p$ to be a factor of $f$ of the form $X_i^{n_i}$ and $q=f/p$.\end{proof}


{\bf Remark:} A referee has pointed out that (\ref{E:x2Dintro}) implies that the sum $\sum_{\{j,k\}\in P_2(N)}M_{jk}^2f(X_1,\ldots, X_n)$ coincides, as a {\em function}, on $S^{N-1}(a)$ with a polynomial function in $X_1, \ldots, X_n$. It should be noted that it is the {\em uniqueness} part of the result that is useful to us in defining $\Delta_{S^{N-1}(a)}$ and establishing its linearity. 

The right hand side of  (\ref{E:Mjksqrdfpqd}) is equal, modulo $\mcz_N(a)$, to
\begin{equation}
\left[\Delta_{S^{N-1}(a)}p\right]q-2d_pd_qpq+p\Delta_{S^{N-1}(a)}q
\end{equation}
and this is a polynomial in $X_1, \ldots, X_n$. Thus
\begin{equation}\label{E:Deltpq}
\Delta_{S^{N-1}(a)}(pq)=\left[\Delta_{S^{N-1}(a)}p\right]q-2\frac{d_pd_q}{a^2}pq+p\Delta_{S^{N-1}(a)}q,
\end{equation}
where $p$ is a homogeneous polynomial in $X_1,\ldots, X_m$ of degree $d_p$, and $q$ is a homogeneous polynomial in $X_{m+1}, \ldots, X_n$ of degree $d_q$.
   
 \subsection{Limiting spherical Laplacian on polynomials}\label{ss:sphpoly}    
From (\ref{E:DSphNX1n}) we have the limit:
\begin{equation}\label{E:laplx1Nlim}
\begin{split}
  \lim_{N\to\infty}  \Delta_{S^{N-1}(\sqrt{N})}X_1^m  
&= m(m-1)X_1^{m-2}-mX_1^m\\
&=\left[ \frac{d^2}{dX_1^2} -X_1\frac{d}{dX_1}\right]X_1^m.
\end{split}
\end{equation}
On the left we are taking the limit, as $N\to\infty$, of a sequence of polynomials in $X_1$ of degree $\leq m$. This is a limit in a finite-dimensional vector space and so is meaningful without any additional topological structure.

\begin{prop}\label{P:limSLappolydiffvar}
If $p$ and $q$ are polynomials in different sets of variables, then
\begin{equation}\label{E:Mjksqrdfpqdlim}
\begin{split}
 \lim_{N\to\infty}\Delta_{S^{N-1}(\sqrt{N})}   (pq) 
&=  \left[ \lim_{N\to\infty}\Delta_{S^{N-1}(\sqrt{N})}   p\right] q     +p \left[ \lim_{N\to\infty}\Delta_{S^{N-1}(\sqrt{N})}   q \right].
\end{split}
\end{equation}
\end{prop}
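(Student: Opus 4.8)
The plan is to reduce to the homogeneous case and then invoke the exact product formula (\ref{E:Deltpq}), observing that its only non-Leibniz term carries a factor $1/a^2=1/N$ that disappears in the limit. First I would note that the map $(p,q)\mapsto \Delta_{S^{N-1}(\sqrt{N})}(pq)$ is bilinear, as is the right-hand side, and that each operator $\Delta_{S^{N-1}(\sqrt{N})}$ acts on a fixed finite-dimensional space $\mcp^{\leq d}_n$ by Proposition \ref{P:Deltaspherelinsa}; since a limit of a finite sum is the sum of the limits, it suffices to prove the identity when $p$ and $q$ are homogeneous monomials in disjoint variable sets, of degrees $d_p$ and $d_q$ respectively.

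For such homogeneous $p$ and $q$ the product formula (\ref{E:Deltpq}) with $a=\sqrt{N}$ reads
\begin{equation}
\Delta_{S^{N-1}(\sqrt{N})}(pq) = \left[\Delta_{S^{N-1}(\sqrt{N})}p\right]q - \frac{2 d_p d_q}{N}\,pq + p\,\Delta_{S^{N-1}(\sqrt{N})}q.
\end{equation}
I would then let $N\to\infty$ term by term inside the finite-dimensional space $\mcp^{\leq d_p+d_q}_n$. The middle term is $pq$ times the scalar $-2d_p d_q/N$; since $d_p$, $d_q$ and the polynomial $pq$ are all fixed while $1/N\to 0$, this term tends to $0$. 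The two outer terms converge to $\left[\lim_{N}\Delta_{S^{N-1}(\sqrt{N})}p\right]q$ and $p\left[\lim_{N}\Delta_{S^{N-1}(\sqrt{N})}q\right]$, which yields the asserted identity for homogeneous $p,q$.

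The one point that needs care is the existence of the individual limits $\lim_{N}\Delta_{S^{N-1}(\sqrt{N})}p$ appearing on the right. For a monomial in a single variable this is exactly (\ref{E:laplx1Nlim}); for a general monomial the limit exists by induction on the number of variables, peeling off one factor $X_i^{n_i}$ at a time and applying the exact finite-$N$ identity (\ref{E:Deltpq}) with $p=X_i^{n_i}$ (the base case) and $q$ the remaining factor (fewer variables). The recursion closes precisely because the cross term vanishes in the limit, so no circularity arises. Having established existence and the homogeneous identity, I would reassemble the general case: writing $p=\sum_i p_i$ and $q=\sum_j q_j$ as sums of homogeneous components, bilinearity gives the finite sum $\Delta_{S^{N-1}(\sqrt{N})}(pq)=\sum_{i,j}\Delta_{S^{N-1}(\sqrt{N})}(p_i q_j)$, so the limit passes through the sum and the homogeneous identity for each pair $(p_i,q_j)$ reassembles into the claimed relation.

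I expect no serious obstacle here; the conceptual content is simply that at finite $N$ the operator $\Delta_{S^{N-1}(\sqrt{N})}$ fails to be a derivation by exactly the amount $-2d_p d_q/N\,pq$, and the only genuine work is organizing the reduction to homogeneous pieces so that (\ref{E:Deltpq}) applies and confirming that the limits on the right-hand side are well defined.
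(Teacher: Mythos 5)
Your proposal is correct and follows essentially the same route as the paper: both apply the exact product identity (\ref{E:Deltpq}) with $a=\sqrt{N}$ to homogeneous $p$, $q$ in disjoint variables and observe that the sole non-Leibniz term $-2d_pd_q\,pq/N$ vanishes as $N\to\infty$. The extra care you take (bilinear reduction to homogeneous pieces and the inductive existence of the individual limits via (\ref{E:laplx1Nlim})) is left implicit in the paper's proof but is handled there by the same mechanism in the surrounding discussion, so there is no substantive difference.
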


\begin{proof}
Let $p$ be a homogeneous polynomial in $X_1, \ldots, X_m$ of degree $d_p$ and $q$ be a homogeneous polynomial in $X_{m+1}, \ldots, X_n$ of degree $d_q$. 
Using (\ref{E:Deltpq}) we have:

\begin{equation}\label{E:Mjksqrdfpqdlim2}
\begin{split}
&\lim_{N\to\infty}\Delta_{S^{N-1}(\sqrt{N})}   (pq) \\
&= \lim_{N\to\infty}\left(  \left[\Delta_{S^{N-1}(\sqrt{N})}   p\right] q   -2
 \frac{d_pd_q }{N}pq  +p \left[\Delta_{S^{N-1}(\sqrt{N})}   q \right]\right).
\end{split}
\end{equation}
 
As $N\to\infty$, the middle term on the right hand side goes to $0$.

\end{proof}

Applying Proposition \ref{P:limSLappolydiffvar}   inductively, with $p$ and $q$ being monomials, and using (\ref{E:laplx1Nlim}), we have
\begin{equation}\label{E:limDeltasum}
 \lim_{N\to\infty}   \Delta_{S^{N-1}(\sqrt{N})}\left(X_1^{j_1}\ldots X_m^{j_m}\right)
 = \sum_{i=1}^m\left[ \frac{\partial^2}{{\partial}X_i^2}-X_i\frac{\partial}{{\partial}X_i}\right]\left(X_1^{j_1}\ldots X_m^{j_m}\right).
\end{equation}
Let $\ch$ be the Hermite operator:
\begin{equation}
\ch=\sum_{j=1}^\infty\left[ \frac{\partial^2}{{\partial}X_j^2}-X_j\frac{\partial}{{\partial}X_j}\right],
\end{equation}
acting on polynomials. 

We  can now summarize our observations:
\begin{prop}\label{P:limspjLapl}
With notation as above, for any polynomial $p\in \mcp^{\leq m}_N$ the elements $\Delta_{S^{N-1}(\sqrt{N})}p$ all lie in the finite-dimensional space $\mcp^{\leq m}_N$, and
\begin{equation}\label{E:limDeltasumch}
 \lim_{N\to\infty}   \Delta_{S^{N-1}(\sqrt{N})}p={\ch}p.
 \end{equation}
 
\end{prop}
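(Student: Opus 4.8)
The plan is to assemble this proposition from the monomial-level facts already in hand, using nothing beyond linearity and the finite-dimensionality of the spaces involved. There are two assertions to verify: first, that $\Delta_{S^{N-1}(\sqrt{N})}$ sends a fixed polynomial of degree $\leq m$ to another polynomial of degree $\leq m$ in the same variables, so that the whole sequence (indexed by $N$) lives in one fixed finite-dimensional space and its limit is meaningful; and second, that this limit is $\ch p$.

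First I would dispose of the degree bound. By the linearity established in Proposition \ref{P:Deltaspherelinsa} it suffices to treat a single monomial $p = X_1^{j_1}\ldots X_k^{j_k}$ with $j_1 + \ldots + j_k = d \leq m$. The one-variable formula (\ref{E:DSphNX1n}) shows that $\Delta_{S^{N-1}(\sqrt{N})}(X_i^{j_i})$ is a linear combination of $X_i^{j_i}$ and $X_i^{j_i-2}$, hence of degree $\leq j_i$ in $X_i$ alone. Feeding single-variable factors into the product rule (\ref{E:Deltpq})---each term of which has total degree at most $d_p + d_q$---and iterating over the factors of $p$ exactly as in the induction of Proposition \ref{P:Mjkfmin}, I obtain $\Delta_{S^{N-1}(\sqrt{N})}p \in \mcp^{\leq m}_k$. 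Thus all the vectors $\Delta_{S^{N-1}(\sqrt{N})}p$, for $N > k$, together with the candidate limit $\ch p$, lie in the single finite-dimensional space $\mcp^{\leq m}_k$, and the limit may be computed coordinate-wise in a fixed monomial basis.

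For the limit itself I would again reduce to monomials by linearity. For $p = X_1^{j_1}\ldots X_k^{j_k}$, equation (\ref{E:limDeltasum}) already identifies the limit as $\sum_{i=1}^k [\partial_i^2 - X_i\partial_i]p$. Since $p$ depends only on $X_1, \ldots, X_k$, each derivative $\partial_j p$ with $j > k$ vanishes, so this finite sum coincides with the full Hermite operator $\ch p = \sum_{j=1}^\infty [\partial_j^2 - X_j\partial_j]p$. Summing over the monomials making up a general $p$ and invoking the linearity of both $\Delta_{S^{N-1}(\sqrt{N})}$ and $\ch$ then gives (\ref{E:limDeltasumch}). The argument is largely bookkeeping; the one step I expect to require genuine care---the closest thing here to an obstacle---is the degree bound of the second paragraph, since without a fixed ambient finite-dimensional space the pointwise limit of this sequence of operators would not automatically be well-defined.
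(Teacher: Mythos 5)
Your proposal is correct and follows essentially the same route as the paper: reduce to monomials by linearity, use the one-variable formula (\ref{E:DSphNX1n}) together with the disjoint-variables product rule (\ref{E:Deltpq}) inductively to get (\ref{E:limDeltasum}), and observe that the finite sum of Hermite terms agrees with $\ch$ on polynomials in finitely many variables. Your explicit degree-bound argument, showing all $\Delta_{S^{N-1}(\sqrt{N})}p$ lie in one fixed finite-dimensional space so the limit is meaningful, is a point the paper asserts but leaves implicit, and it is a worthwhile addition.
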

Thus, following  Umemura and Kono \cite{Ume1965}, one may think of the operator $\ch$ as an infinite-dimensional  (spherical) Laplacian.


{\bf Remark:} It might seem that equation (\ref{E:limDeltasumch}) can be obtained by applying the identity (\ref{E:x2Dintro}) to a polynomial $p$ restricted as a function to the sphere $S^{N-1}(\sqrt{N})$:
\begin{equation}\label{E:fall}
\begin{split}
&N\Delta_{N}p(x_1,\ldots, x_n) \\&=  [(r\partial_r)^2+(N-2)r\partial_r]p(x_1,\ldots, x_n) +\norm{{\bf M}}^2p(x_1,\ldots, x_n),   
\end{split}
\end{equation}
where $n\leq N$; the dividing by $N$ and letting $N\to\infty$ would seem to yield (\ref{E:limDeltasumch}). However, there is a fallacy in this argument.  In (\ref{E:fall}) the point $x=(x_1,\ldots, x_n)$ lies on the sphere $S^{N-1}(\sqrt{N})$, and so it makes no sense to let $N\to\infty$ while `holding $x$ fixed.' In our Proposition \ref{P:limspjLapl} the limit is not in the sense of a limit of a sequence of functions on the spheres $S^{N-1}(\sqrt{N})$ but as the limit of a sequence of algebraic polynomials.

\subsection{Orthogonal projections and limits}\label{ss:ortlimproj}
Recall from (\ref{E:defdotPiN}) the orthogonal projection
$${\dot\Pi^{\leq d}}_{N,k}:\mcp^{\leq d}_k\to\mcp^{\leq d-1}_k,$$
projecting onto the space of polynomials in $X_1,\ldots, X_k$ of total degree $<d$.  The role of $N$ here is to provide the inner-product $\la\cdot,\cdot\ra_N$ relative to which the orthogonal projection is being taken. The spaces involved are all finite-dimensional.

 Then
\begin{equation}
\Delta_{S^{N-1}(\sqrt{N})}  {\dot\Pi^{\leq d}}_{N,k}={\dot\Pi^{\leq d}}_{N,k} \Delta_{S^{N-1}(\sqrt{N})}
\end{equation}
This is because $\Delta_{S^{N-1}(\sqrt{N})} $ is a self-adjoint operator (Proposition \ref{P:Deltaspherelinsa}) on $\mcp^{\leq d}_N$, with respect to the inner-product $\la\cdot,\cdot\ra_N$, and preserves the subspace $\mcp_k^{\leq d}$.  Then, working entirely within the finite-dimensional space $\mcp_k^{\leq d}$,  we have:
\begin{equation}
\begin{split}
 \lim_{N\to\infty}\Delta_{S^{N-1}(\sqrt{N})}  {\dot\Pi^d}_{N,k}p   
&=\lim_{N\to\infty}  {\dot\Pi^{\leq d}}_{N,k} \Delta_{S^{N-1}(\sqrt{N})}p  \\
&=\lim_{N\to\infty}{\dot\Pi^{\leq d}}_{N,k}\lim_{N\to\infty} \Delta_{S^{N-1}(\sqrt{N})}p  \\
&=\Pi^{\leq d}{\ch}p.
\end{split}
\end{equation}
On the other hand, the left side is equal to $\mch \Pi^{\leq d}$. Thus
\begin{equation}\label{E:chPi}
\mch \Pi^{\leq d}=\Pi^{\leq d}{\ch}.
\end{equation}

{\bf{ Acknowledgments.}} This research was  supported in part by by NSA grant H98230-16-1-0330.   

\bibliographystyle{plain}

\end{document}